\newtheorem{thm}{Theorem}[section]
\newtheorem{cla}{Claim}[section]
\newtheorem{rem}{Remark}[section]
\newtheorem{lem}{Lemma}[section]
\newtheorem{conj}{Conjecture}[section]
\theoremstyle{definition}
\begin{document}
\title{Spectral extrema of graphs with fixed size: cycles and complete bipartite graphs\footnote{Supported by the National Natural Science Foundation of China (Nos. 11971445, 11771141 and 12011530064).}}
\author{{\bf Mingqing Zhai$^{a}$}
, {\bf Huiqiu Lin$^{b}$}\thanks{Corresponding author. E-mail addresses: mqzhai@chzu.edu.cn
(M. Zhai); huiqiulin@126.com (H. Lin); jlshu@math.ecnu.edu.cn (J. Shu).}
, {\bf Jinlong Shu$^{c}$}
\\
{\footnotesize $^a$ School of Mathematics and Finance, Chuzhou University, Chuzhou, Anhui 239012, China} \\
{\footnotesize $^b$ Department of Mathematics, East China University of Science and Technology, Shanghai 200237, China}\\
{\footnotesize $^c$ Department of Computer Science and Technology, East China Normal University, Shanghai 200237, China}}
\date{}

\maketitle {\flushleft\large\bf Abstract:}
Nikiforov [Some inequalities for the largest eigenvalue of a graph, Combin. Probab. Comput. 179--189] showed that if $G$ is $K_{r+1}$-free then the spectral radius $\rho(G)\leq\sqrt{2m(1-1/r)}$,
which implies that $G$ contains $C_3$ if $\rho(G)>\sqrt{m}$.
In this paper, we follow this direction on determining which subgraphs will be contained in $G$ if $\rho(G)> f(m)$,
where $f(m)\sim\sqrt{m}$ as $m\rightarrow \infty$.
We first show that if $\rho(G)\geq \sqrt{m}$, then $G$ contains $K_{2,r+1}$ unless $G$ is a star;
and $G$ contains either $C_3^+$ or $C_4^+$ unless $G$ is a complete bipartite graph,
where $C_t^+$ denotes the graph obtained from $C_t$ and $C_3$ by identifying an edge.
Secondly, we prove that if $\rho(G)\geq{\frac12+\sqrt{m-\frac34}}$,
then $G$ contains pentagon and hexagon unless $G$ is a book;
and if $\rho(G)>{\frac12(k-\frac12)+\sqrt{m+\frac14(k-\frac12)^2}}$, then $G$ contains $C_t$ for every $t\leq 2k+2$.
In the end, some related conjectures are provided for further research.

\vspace{0.1cm}
\begin{flushleft}
\textbf{Keywords:} Complete bipartite subgraph; Cycle; Forbidden subgraph; Spectral radius; Adjacency matrix
\end{flushleft}
\textbf{AMS Classification:} 05C50; 05C35

\section{Introduction}\label{se1}

Let $\mathcal{F}$ be a family of graphs.
We say a graph $G$ is \emph{$\mathcal{F}$-free} if it does not contain any $F\in {\cal F}$ as a subgraph.
The \emph{Tur\'{a}n number} $ex(n,{\cal F})$ is the maximum possible number of edges in an ${\cal F}$-free graph with $n$ vertices.
Tur\'an type problems aim to study the Tur\'{a}n number of fixed graphs.
So far, there is a large volume of literature on Tur\'an type problems,
see the survey paper \cite{ZD1}.

For a graph $G$, we use $A(G)$ to denote the adjacency matrix.
The spectral radius $\rho(G)$ is the largest modulus of eigenvalues of $A(G)$.
Nikiforov proposed a spectral Tur\'an problem which asks to determine the
maximum spectral radius of an ${\cal F}$-free graph with $n$ vertices.
This can be viewed as the spectral analogue of Tur\'an type problem.
The spectral Tur\'an problem has received a great deal of attention in the past decades.
The maximum spectral radius of various graphs have been determined for large enough $n$,
for example,
$K_{r}$-free graphs \cite{WILF},
$K_{s,t}$-free graphs \cite{BG},
$C_{2k+1}$-free graphs \cite{V9},
induced $K_{s,t}$-free graphs \cite{NTT},
$\{K_{2,3},K_4\}$-minor free and $\{K_{3,3},K_5\}$-minor free graphs \cite{TT}.
For more results in this direction, readers are referred to a survey by Nikiforov \cite{V8}.

The problem of characterizing graphs of given size with maximal spectral radii was initially posed by Brualdi and Hoffman \cite{BR} as a conjecture,
and solved by Rowlinson \cite{PR}.
Nosal \cite{Nosal} showed that every triangle-free graph $G$ on $m$ edges satisfies that $\rho(G)\leq \sqrt{m}$.
Very recently, Lin, Ning and Wu \cite{LNW} slightly improved the bound to $\rho(G)\leq \sqrt{m-1}$ when $G$ is non-bipartite and triangle-free.

If we vary the spectral Tur\'an problem by fixing the number of edges instead of the number of vertices,
then the problem asks to determine the maximum spectral radius of an ${\cal F}$-free graph with $m$ edges.
Nikiforov \cite{V7,V09,V6} focused his attention on this kind of problem.
For a family of graph $\cal F$ and an integer $m$,
let $\mathbb{G}(m,\mathcal{F})$ denote the family of $\mathcal{F}$-free graphs with $m$ edges and without isolated vertices.
In particular, if $\mathcal{F}=\{F\}$, then we write $\mathbb{G}(m,F)$
for $\mathbb{G}(m,\mathcal{F})$.
Nikiforov solved this problem for $F=K_{r+1}$ and $F=C_4$.

\begin{thm} \label{th1.1} (\cite{V7, V09})
Let $G\in \mathbb{G}(m,K_{r+1})$. Then $\rho(G)\leq \sqrt{2m(1-\frac1r)}$,
with equality if and only if $G$ is a complete bipartite graph for $r=2$,
and $G$ is a complete regular $r$-partite graph for $r\geq3$.
\end{thm}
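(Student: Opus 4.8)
The plan is to combine the Rayleigh characterization of $\rho(G)$ with the Motzkin--Straus theorem on the Lagrangian of a graph. Let $\mathbf{x}=(x_v)_{v\in V(G)}$ be a nonnegative eigenvector for $\rho:=\rho(G)$ with $\sum_{v} x_v^2=1$; such $\mathbf{x}$ exists by the Perron--Frobenius theorem (applied to a component of largest spectral radius if $G$ is disconnected). Then
$$\rho=\mathbf{x}^{\top}A(G)\mathbf{x}=2\sum_{uv\in E(G)}x_ux_v .$$
Since $\sum_v x_v^2=1$ and $x_v^2\ge 0$, the vector $(x_v^2)_{v\in V(G)}$ lies in the probability simplex, so the Motzkin--Straus theorem together with $\omega(G)\le r$ (as $G$ is $K_{r+1}$-free, its clique number is at most $r$) gives
$$\sum_{uv\in E(G)}x_u^2x_v^2\le \tfrac12\Bigl(1-\tfrac1{\omega(G)}\Bigr)\le \tfrac12\Bigl(1-\tfrac1r\Bigr).$$

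Next I would apply Cauchy--Schwarz over the $m$ edges,
$$\Bigl(\sum_{uv\in E(G)}x_ux_v\Bigr)^{2}\le m\sum_{uv\in E(G)}x_u^2x_v^2\le \tfrac{m}{2}\Bigl(1-\tfrac1r\Bigr),$$
and combine with the previous display to obtain $\rho^{2}=4\bigl(\sum_{uv\in E(G)}x_ux_v\bigr)^{2}\le 2m\bigl(1-\tfrac1r\bigr)$, which is the claimed bound. This part is short; essentially all the work is in the equality discussion.

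For the equality case, note that $\rho^{2}=2m(1-1/r)$ forces (i) equality in Cauchy--Schwarz, i.e.\ $x_ux_v$ is the same for every edge $uv\in E(G)$; (ii) equality in the Motzkin--Straus step, i.e.\ $\omega(G)=r$ and $(x_v^2)_v$ maximizes the Lagrangian of $G$; and (iii) $G$ is connected (otherwise $\rho$ equals $\rho(G')$ for a $K_{r+1}$-free proper subgraph $G'$ with fewer edges, and the bound applied to $G'$ is violated), so the Perron vector $\mathbf{x}$ is strictly positive and $(x_v^2)_v$ is an interior maximizer. I would then invoke the weight-shifting argument underlying Motzkin--Straus: at an all-positive maximizer, any two nonadjacent vertices satisfy a balancing identity, from which nonadjacency becomes an equivalence relation on $V(G)$, so $G$ is complete multipartite; combined with $\omega(G)=r$ this says $G$ is complete $r$-partite, with parts $V_1,\dots,V_r$. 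Finally, condition (i) pins down $\mathbf{x}$: since $x_ux_v$ is constant over edges and the bipartite graph between any two parts is complete, $\mathbf{x}$ is constant on each $V_i$, say $\mathbf{x}\equiv a_i$ on $V_i$, with the product $a_ia_j$ independent of the choice of $i\ne j$. For $r\ge 3$ this forces $a_1=\dots=a_r$, hence $\mathbf{x}$ constant, $G$ regular, and a regular complete $r$-partite graph is precisely a complete regular $r$-partite graph (equal parts). For $r=2$ no such constraint appears, and indeed every $K_{a,b}$ has $\rho=\sqrt{ab}=\sqrt{m}$, so equality holds exactly for complete bipartite graphs when $r=2$. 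Conversely, a one-line computation confirms that the complete regular $r$-partite graph attains the bound for every $r\ge 2$.

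The main obstacle is the equality analysis, and within it the fact that a graph admitting a fully positive Motzkin--Straus maximizer must be complete multipartite: this needs the perturbation/transfer argument at the heart of Motzkin--Straus rather than its statement as a black box. An alternative, perhaps cleaner, route is to bypass quoting Motzkin--Straus altogether and instead carry out the weight-shifting argument directly on the Perron vector $\mathbf{x}$; this yields the inequality and the extremal structure in one go, at the cost of a somewhat longer but self-contained proof.
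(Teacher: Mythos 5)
Theorem~\ref{th1.1} is quoted from Nikiforov \cite{V7,V09}; the paper itself gives no proof, so there is nothing internal to compare against. Your argument is essentially Nikiforov's original one from the cited source: Rayleigh quotient, Cauchy--Schwarz over the $m$ edges, and Motzkin--Straus applied to the squared Perron entries, giving $\rho^2\le 4m\sum_{uv\in E}x_u^2x_v^2\le 2m(1-\tfrac1r)$. That part is complete and correct, including the reduction to the connected case (legitimate here because graphs in $\mathbb{G}(m,K_{r+1})$ have no isolated vertices, so a second component carries at least one edge and the bound for the dominant component with $m'<m$ edges would be violated).

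The only step that genuinely needs care is the one you flag: that a graph admitting a strictly positive Lagrangian maximizer is complete multipartite. Be aware that the ``balancing identity'' for nonadjacent pairs is not by itself enough --- at an interior critical point the first-order condition already forces $\sum_{w\in N(v)}x_w^2$ to be the same for \emph{all} vertices, so comparing two nonadjacent vertices yields nothing new. What does the work is a second-order perturbation on an induced $K_2\cup K_1$: if $uv\in E$ while $w$ is adjacent to neither, shifting weight $t$ from $w$ onto $u$ and $v$ in proportions $\alpha,\beta$ changes the Lagrangian by exactly $t^2\alpha\beta>0$ (the linear terms cancel by the first-order condition), contradicting maximality; hence no such triple exists and nonadjacency is transitive. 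With that inserted, the rest of your equality analysis (constancy of $x_ux_v$ on edges forcing $\mathbf{x}$ constant on parts, and $a_ia_j$ constant forcing equal parts when $r\ge3$ but imposing nothing when $r=2$) is correct, and the converse computations check out. So the proposal is sound modulo writing out that one perturbation explicitly.
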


\begin{thm} \label{th1.2} (\cite {V6})
Let $G\in \mathbb{G}(m,C_4)$, where $m\geq10$. Then $\rho(G)\leq\sqrt m$,
with equality if and only if $G$ is a star.
\end{thm}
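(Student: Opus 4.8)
The plan is to study a Perron eigenvector of $A(G)$ anchored at a vertex of maximum weight, using repeatedly the defining feature of a $C_4$-free graph: any two vertices have at most one common neighbour. Write $\rho=\rho(G)$ and let $x$ be a nonnegative eigenvector of $A(G)$ for $\rho$, normalised so that $x_u=1=\max_v x_v$ for a vertex $u$. If $\rho\le 3$ then $\rho^{2}\le 9<10\le m$ and we are done, so from now on assume $\rho>3$. Counting walks of length two issuing from $u$ and using $|N(a)\cap N(b)|\le 1$ gives the identity
\[
\rho^{2}=\rho^{2}x_u=\sum_{w}\bigl(A(G)^{2}\bigr)_{uw}x_w=d(u)+\sum_{w\in P_{2}(u)}x_w,
\]
where $P_{2}(u)$ is the set of vertices $w\ne u$ having a (necessarily unique) common neighbour with $u$.

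Next I would decompose $V(G)=\{u\}\cup N(u)\cup W$. As $G$ is $C_4$-free, $N(u)$ spans a matching; let $M_u\subseteq N(u)$ be its set of endpoints, so $|M_u|=2e(N(u))$ and $M_u$ is exactly the set of neighbours of $u$ lying on a triangle through $u$. One checks that $P_{2}(u)=M_u\cup\bigl(P_{2}(u)\cap W\bigr)$ and that each vertex of $P_{2}(u)\cap W$ has exactly one neighbour in $N(u)$, whence $|P_{2}(u)\cap W|=e(N(u),W)$. The sum over $P_{2}(u)\cap W$ is controlled directly: such a $z$ has one neighbour in $N(u)$ of weight at most $1$ and all of its other neighbours in $W$, so $\rho x_z\le 1+|N(z)\cap W|$, and summing yields $\rho\sum_{z\in P_{2}(u)\cap W}x_z\le e(N(u),W)+2e(W)$.

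The crux — and the main obstacle — is the estimate for $\Sigma:=\sum_{v\in M_u}x_v$, the weight carried by the neighbours of $u$ that lie on triangles. The naive bounds $\Sigma\le 2e(N(u))$ and $\Sigma\le\sum_{v\in N(u)}x_v=\rho$ are both too lossy (they are essentially attained by friendship graphs and by near-stars) and neither produces $\rho^{2}\le m$. The device is to pair each matched neighbour $v$ with its partner $v'$: adding the eigenvalue equations at $v$ and $v'$ and using $C_4$-freeness (so that $v,v'$ have no common neighbour besides $u$, and neither has a second neighbour in $N(u)$) gives $(\rho-1)(x_v+x_{v'})=2+\sum_{w\in W,\;w\sim\{v,v'\}}x_w$. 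Summing over the $e(N(u))$ pairs, and using that each $w\in W$ is adjacent to a vertex of at most one such pair, yields $(\rho-1)\Sigma\le 2e(N(u))+e(N(u),W)$. The gain is the factor $\rho-1>2$ in the denominator, which is exactly what absorbs the triangle contribution; this is where the hypothesis $m\ge 10$ (i.e.\ $\rho>3$) is used, and the threshold is sharp because the friendship graph of three triangles sharing a vertex has $m=9$, $\rho=3$ and $\rho^{2}=m$.

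Finally I would assemble the pieces: substituting the two estimates into the displayed identity and writing $d(u)=m-e(N(u))-e(N(u),W)-e(W)$ gives
\[
\rho^{2}\le m-e(N(u))-e(N(u),W)-e(W)+\frac{2e(N(u))+e(N(u),W)}{\rho-1}+\frac{e(N(u),W)+2e(W)}{\rho};
\]
since $\rho>3$ the last two fractions are at most $e(N(u))+\tfrac12 e(N(u),W)$ and $\tfrac12 e(N(u),W)+e(W)$, so $\rho^{2}\le m$, with equality only if $e(N(u))=e(N(u),W)=e(W)=0$. In that case every edge meets $u$, and since $G$ has no isolated vertices this forces $W=\emptyset$, hence $G=K_{1,m}$; conversely $\rho(K_{1,m})=\sqrt m$. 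The only steps requiring genuine care are the pairing inequality of the third paragraph and keeping track of which consequence of $C_4$-freeness is invoked at each point; the rest is routine bookkeeping.
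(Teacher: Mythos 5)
Your proof is correct, and every step checks out: the walk identity $\rho^2=d(u)+\sum_{w\in P_2(u)}x_w$ is exactly where $C_4$-freeness enters (all off-diagonal entries of $A^2$ are $0$ or $1$); $G[N(u)]$ is indeed a matching; the paired eigenvalue equations give $(\rho-1)(x_v+x_{v'})\le 2+d_W(v)+d_W(v')$ because $v,v'$ have no common neighbour in $W$ and no further neighbours in $N(u)$; and the final bookkeeping with $\rho-1\ge 2$ and $\rho\ge 2$ yields $\rho^2\le m$, with equality forcing $e(N(u))=e(N(u),W)=e(W)=0$ and hence a star. Your sharpness remark is also right: $S_7^3$ (three triangles sharing a vertex) is $C_4$-free with $m=9$ and $\rho=3=\sqrt m$, so $m\ge 10$ cannot be dropped.

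Be aware, though, that the paper does not prove this statement at all: Theorem \ref{th1.2} is quoted from Nikiforov \cite{V6}. The closest argument actually carried out in the paper is the proof of the more general Theorem \ref{th1.3}~(i) on $K_{2,r+1}$-free graphs, which begins from the same $A^2$ identity at a maximum-weight vertex and the same decomposition $V=\{u^*\}\cup U\cup W$, but then invokes the extremal-graph machinery of Section \ref{se2} (Lemmas \ref{le2.1} and \ref{le2.2}, local edge switchings) and a three-way partition of $U$ by degree conditions, at the price of the hypothesis $m\ge 16r^2$. Your route is more elementary and more precise for this special case: it needs no extremality assumption and no switching, and it recovers the sharp threshold $m\ge 10$. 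What it gives up is generality: the two devices you rely on --- that $N(u)$ spans a matching, and that matched pairs can be summed in the eigenvalue equation to produce the factor $\rho-1$ --- are specific to forbidding $K_{2,2}$, and it is precisely their failure for $K_{2,r+1}$ with $r\ge 2$ that pushes the paper into its heavier partition argument.
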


Theorems \ref{th1.1} and \ref{th1.2}
assert that $G$ contains $C_3$ and $C_4$ if $\rho(G)>\sqrt{m}$.
Based on this observation, we wish to know which graphs will be a subgraph of $G$ if $\rho(G)>f(m)$,
where $f(m)\sim\sqrt{m}$ as $m\rightarrow \infty$.
We obtain several results in this direction.
Let $S_n^k$ denote the graph obtained from $K_{1,n-1}$ by adding $k$ disjoint edges within its independent set.
Let $C_t^+$ denote the graph obtained from $C_t$ and $C_3$ by identifying an edge.
The first result is about $K_{2,r+1}$-free graphs and $\{C_3^+,C_4^+\}$-free graphs.

\begin{thm} \label{th1.3}
Let $G$ be a graph with size $m$. Then the following two statements hold.
\vspace{4bp}

\noindent (i) If $G\in\mathbb{G}(m,K_{2,r+1})$ with $r\geq2$ and $m\geq 16r^2$, then $\rho(G)\leq\sqrt m$,
and equality holds if and only if $G$ is a star.
\vspace{4bp}

\noindent (ii) If $G\in\mathbb{G}(m,\{C_3^+,C_4^+\})$ with $m\geq9$, then $\rho(G)\leq\sqrt m$,
and equality holds if and only if $G$ is a complete bipartite graph or one of $S_7^3$, $S_8^2$ and $S_9^1$.
\end{thm}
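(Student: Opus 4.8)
The plan is to study the Perron vector at a vertex of largest weight, treating both parts in parallel. We may assume $G$ is connected. Let $x$ be the Perron vector of $G$, $\rho=\rho(G)$, pick $u$ with $x_{u}=\max_{v}x_{v}$ and normalise $x_{u}=1$, and set $N=N(u)$, $d=d(u)=|N|$, $R=V(G)\setminus(N\cup\{u\})$. Counting walks of length two from $u$, the identity $\rho^{2}x_{u}=(A^{2}x)_{u}$ reads
\[ \rho^{2}=d+\sum_{w\in N}|N\cap N(w)|\,x_{w}+\sum_{w\in R}|N\cap N(w)|\,x_{w}. \]
Using $x_{w}\le1$, $\sum_{w\in N}|N\cap N(w)|=2e(N)$ and $\sum_{w\in R}|N\cap N(w)|=e(N,R)=m-d-e(N)-e(R)$, this gives the working bound $\rho^{2}\le m+e(N)-e(R)$, so $\rho\le\sqrt m$ holds as soon as $e(N)\le e(R)$; all the difficulty is the case where $u$ lies in a triangle and $e(N)>e(R)$.

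First I would dispose of the case $e(N)=0$. Then the working bound gives $\rho^{2}\le m-e(R)\le m$, and if $\rho\ge\sqrt m$ it forces $e(R)=0$ together with equality in $\sum_{w\in R}|N\cap N(w)|x_{w}\le e(N,R)$; since every $w\in R$ then has all its neighbours in $N$, this means $x_{w}=1$ for all $w\in R$. Substituting $x_{w}=1$ for $w\in\{u\}\cup R$ back into the eigenvalue equations yields $\rho x_{v}=d(v)$ for $v\in N$, hence $\rho^{2}=\sum_{z\in N\cap N(w)}d(z)$ for each $w\in\{u\}\cup R$; comparing $w=u$ with a general $w$ forces $N(w)=N$, so $G$ is complete bipartite with parts $N$ and $\{u\}\cup R$ (which indeed has $\rho=\sqrt{ab}=\sqrt m$, so these are genuine equality graphs). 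For (i), a $K_{a,b}$ with $ab=m\ge16r^{2}$ that is $K_{2,r+1}$-free must have $\min\{a,b\}=1$ — otherwise its larger part, of size at least $\sqrt m\ge4r>r$, yields a $K_{2,r+1}$ — so $G=K_{1,m}$. For (ii), every complete bipartite graph is $\{C_{3}^{+},C_{4}^{+}\}$-free (being triangle-free), which gives the bipartite equality graphs.

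The substantial case is $e(N)\ge1$. For (ii) I would read off structure from the forbidden subgraphs: two edges of $G[N]$ sharing a vertex give two triangles through $u$ on a common edge, i.e.\ a $C_{3}^{+}$, so $G[N]$ is a matching, of some size $k\ge1$; and if $z\in R$ is adjacent to a matched vertex $v\in N$, then $z$ has no other neighbour in $N$, for a second neighbour $v''\in N$ would yield a $C_{4}^{+}$ (the four-cycle on $u,v,z,v''$ with the triangle on the edge $uv$), or a $C_{3}^{+}$ if $v''$ is the partner of $v$. Combining this with the exact identity above — using that the matched vertices of $N$ and the vertices of $R$ must have Perron weight below $1$ when $R\neq\emptyset$ — I expect to force $R=\emptyset$, so that $u$ is adjacent to all of $V(G)$ and $G=S_{d+1}^{k}$ with $m=d+k$. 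Finally, the characteristic equation of $S_{n}^{k}$ is $\rho^{3}-\rho^{2}-(n-1)\rho+(n-1-2k)=0$; substituting $\rho=\sqrt m$ and $n-1=m-k$ gives the value $k(\sqrt m-3)$, so $\rho(S_{n}^{k})\le\sqrt m$ for $m\ge9$ with equality iff $m=9$, and listing all $S_{n}^{k}$ with $n-1+k=9$, $k\ge1$, $2k\le n-1$ leaves precisely $S_{9}^{1},S_{8}^{2},S_{7}^{3}$ (which are $\{C_{3}^{+},C_{4}^{+}\}$-free with $\rho=3=\sqrt9$). For (i) with $e(N)\ge1$ I would instead use that $K_{2,r+1}$-freeness bounds every codegree by $r$, so $e(N)\le rd/2$ and each $w\in R$ has at most $r$ neighbours in $N$; then the working bound with $\rho\ge\sqrt m$ would force $e(R)\ge e(N)\ge1$, and a split according to the size of $d(u)$ shows that the distance-two vertices carry total Perron weight much smaller than their number (most of them have degree $O(r)$, hence weight at most $2r/\rho\le\frac12$ once $m\ge16r^{2}$), making $\sum_{w\in R}|N\cap N(w)|x_{w}$ strictly below $e(N,R)$ — so $\rho^{2}<m$, a contradiction. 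Hence $e(N)=0$ and $G=K_{1,m}$.

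The main obstacle is precisely this last step of each part: ruling out the ``triangle-at-$u$'' configurations beyond the listed graphs. For (ii) the work is in proving $R=\emptyset$ — i.e.\ that attaching anything to $S_{n}^{k}$ through $N(u)$ or $R$ strictly lowers $\rho$ relative to $\sqrt m$ — which needs the $C_{4}^{+}$ structure together with quantitative Perron-weight estimates for the vertices of $R$. For (i) the obstacle is the regime where $d(u)$ and $e(N(u))$ are both large, where the working bound is too lossy; controlling the Perron weights of the (necessarily many) distance-two vertices there is exactly where the hypothesis $m\ge16r^{2}$ is spent.
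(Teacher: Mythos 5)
Your setup is exactly the paper's: the walk identity $\rho^{2}x_{u}=(A^{2}x)_{u}$ at a maximum Perron coordinate, the resulting bound $\rho^{2}\le m+e(N)-e(R)$, the disposal of the case $e(N)=0$ via Theorem \ref{th1.1} (or your more hands-on equality analysis, which is fine), and the structural facts that $C_3^+$-freeness makes $G[N]$ a matching and $K_{2,r+1}$-freeness caps all codegrees at $r$. But in both parts the proposal stops precisely where the proof actually lives, and in part (i) the sketch you do give does not work as stated. You write that the working bound with $\rho\ge\sqrt m$ "would force $e(R)\ge e(N)\ge 1$"; the bound gives the opposite inequality $e(N)\ge e(R)$, and more importantly your plan --- show the distance-two vertices have small Perron weight, hence $\sum_{w\in R}|N\cap N(w)|x_{w}<e(N,R)$, hence $\rho^{2}<m$ --- is a non sequitur. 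Strictness in the $R$-sum only yields $\rho^{2}<m+e(N)-e(R)$; to reach $\rho^{2}<m$ you must show the slack in the $R$-sum (and in the $N$-sum) is quantitatively at least $e(N)-e(R)$, i.e.\ you must absorb the excess $2e(N)$ coming from the neighbours of $u$, not just discount the vertices of $R$. This absorption is the whole content of the paper's argument: it splits $U=N(u^{*})$ into $U_{1}$ (vertices $u$ with $\sum_{w\in N_W(u)}d_W(w)>r^{2}$), $U_{2}'$ (large $d_W$) and $U_{2}''$, and proves three separate estimates (Claims \ref{cl3.3}--\ref{cl3.5}) showing that half of $\sum_{u\in U}d_U(u)x_u$ is paid for by $e(W)$ and by the deficit $d_W(u)x_{u^*}-\sum_{w\in N_W(u)}x_w$, while the other half is halved again because $x_u<\tfrac12 x_{u^*}$ on $U_{2}''$. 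None of that decomposition is present in your proposal, and it is exactly where $m\ge 16r^{2}$ is used.

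For part (ii) you explicitly defer the key step ("I expect to force $R=\emptyset$"), so there is no proof there either. Two further points you would have to confront. First, the paper does not work with an arbitrary $G$ satisfying $\rho\ge\sqrt m$ but with the spectral-extremal graph $G^{*}$ in the class, which lets it invoke Lemma \ref{le2.2}: every vertex outside $N[u^{*}]$ has degree at least $2$, and non-adjacent degree-two vertices share their neighbourhood. These facts (unavailable for an arbitrary $G$) are used repeatedly in Lemmas \ref{le4.3} and \ref{le4.4} to rule out $e(W)\ge 1$, and again to kill the exceptional configuration $S^{*}$ at $m=9$ where a vertex of $W$ survives all the counting arguments and must be excluded by a separate Perron-weight computation. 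Second, even after $e(W)=0$ one does not immediately get $R=\emptyset$: the paper still needs inequality (\ref{eq4.8}) to force $m=9$ and then a degree argument to eliminate $N_0^2(u^{*})$. Your closing computation with the characteristic polynomial of $S_n^k$ (giving $f(\sqrt m)=k(\sqrt m-3)$ and the list $S_9^1,S_8^2,S_7^3$) is correct and matches Lemma \ref{le4.2}, but it only applies once $V(G)=N[u]$ has been established, which is the part that is missing.
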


It is clear that $\mathbb{G}(m,{C}_4)\subseteq\mathbb{G}(m,K_{2,r+1})$ for any positive integer $r$,
since $C_4\cong K_{2,2}$.
Moreover, note that each $C_3$-free or $C_4$-free graph is also $\{C_3^+, C_4^+\}$-free.
Thus we have $\mathbb{G}(m, {C}_3)\cup\mathbb{G}(m, {C}_4)
\subseteq\mathbb{G}(m,\{C_3^+,C_4^+\}).$ Therefore,
Theorem \ref{th1.3}
generalizes related results on $C_3$-free graphs \cite{Nosal} and $C_4$-free graphs \cite{V6}.

Let us turn our focus to graphs without given cycles.
To state our results, we need the following definition.
Let $S_{n,k}$ be the \emph{complete split graph}, i.e., the graph obtained by joining each vertex of $K_k$ to
$n-k$ isolated vertices.
Clearly, $S_{n,1}$ is a star with $n-1$ edges and $S_{n,2}$ is a \emph{book} with $n-2$ pages.
We have the following theorem on short cycles.

\begin{thm} \label{th1.4}
If either $G\in \mathbb{G}(m,C_5)$ with $m\geq8$ or $G\in \mathbb{G}(m,C_6)$ with $m\geq22$,
then $\rho(G)\leq\frac{{1+\sqrt{4m-3}}}2$,
and equality holds if and only if $G\cong S_{\frac{m+3}2, \ 2}$.
\end{thm}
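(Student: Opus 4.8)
The target inequality is equivalent to $\rho^2-\rho\le m-1$ (indeed $\tfrac{1+\sqrt{4m-3}}{2}$ is the larger root of $t^2-t-(m-1)=0$, and $\rho(G)\ge 1$ whenever $G$ has an edge), so I would aim to prove $\rho^2-\rho\le m-1$ together with the characterisation of equality. First I would reduce to $G$ connected: $\rho(G)$ equals the largest spectral radius among the components of $G$, each of which is again $C_5$-free (resp.\ $C_6$-free) with at most $m$ edges, so using that $\tfrac{1+\sqrt{4m-3}}{2}$ is strictly increasing in $m$, together with the connected case for smaller sizes and the crude bound $\rho\le\sqrt{2m'}$ for very small components, a disconnected $G$ gives strict inequality. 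One may also discard stars (for which $\rho=\sqrt m<\tfrac{1+\sqrt{4m-3}}{2}$), and — by Theorem~\ref{th1.2}, for $m$ large, after checking the few remaining small sizes by hand — graphs with no $C_4$; in particular $d(u)\ge 2$ for any vertex $u$ carrying the maximum entry of the Perron eigenvector $\mathbf x$.

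Normalise $\mathbf x$ so that $x_u=\max_i x_i=1$, put $H:=G[N(u)]$, let $N_2$ be the set of vertices at distance $2$ from $u$, and let $N[u]:=\{u\}\cup N(u)$. Summing the eigenvalue equation $\rho x_v=\sum_{w\sim v}x_w$ over $v\in N(u)$ gives $\rho^2=\sum_w|N(u)\cap N(w)|\,x_w$, and after subtracting $\rho=\sum_{w\in N(u)}x_w$ and writing $d(u)=m-e(H)-e_0$, where $e_0$ is the number of edges of $G$ with an endpoint outside $N[u]$, this becomes
\[
\rho^2-\rho=m-e(H)-e_0+\sum_{w\in N(u)}x_w\bigl(d_H(w)-1\bigr)+\sum_{w\in N_2}|N(w)\cap N(u)|\,x_w .
\]
Now I would invoke $C_5$-freeness: $H$ contains no $P_4$ and no $C_4$ (each would close through $u$ to a $C_5$), so every component of $H$ is a star (possibly a vertex or an edge) or a triangle. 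Hence, using $x_w\le 1$, $\sum_{w\in N(u)}x_w(d_H(w)-1)\le\sum_{w\in N(u)}(d_H(w)-1)^+=2e(H)-n_H^+=e(H)-s$, where $n_H^+$ is the number of non-isolated vertices of $H$ and $s$ the number of its nontrivial star components; moreover $\sum_{w\in N_2}|N(w)\cap N(u)|\,x_w\le e(N(u),N_2)\le e_0$. Substituting yields $\rho^2-\rho\le m-s$, which settles the inequality as soon as $s\ge 1$.

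It remains to treat $s=0$, so (by the structure above) $H$ is a disjoint union of triangles and isolated vertices. If $H$ is edgeless the identity, with $\sum_{w\in N(u)}x_w(d_H(w)-1)=-\rho$, gives $\rho^2\le m$, hence strict inequality; if $H$ has a triangle component then $G$ contains a $K_4$ through $u$, and for $m\ge 8$ a direct estimate (a vertex of such a $K_4$ cannot carry eigenvector weight $1$ once $G$ has further edges, and $C_5$-freeness severely restricts the neighbours in $N(u)$ of any $N_2$-vertex adjacent to the $K_4$) gives strict inequality — consistent with the genuine failure at $K_4$ when $m=6$. Finally, tracing equality in the main chain forces $s=1$, no triangle component, $H$ without isolated vertices, and either $N_2=\emptyset$ or some $w\in N_2$ with $x_w=1$; the latter is impossible for $m\ge 8$, since then all neighbours of $w$ lie in $N(u)$ and $C_5$-freeness confines them to a single edge of $H$, forcing $\rho\le 2$; in the former case $H$ is a single star $K_{1,t}$, its centre together with $u$ is adjacent to every vertex, and $m=2t+1$, so $G\cong S_{t+2,2}=S_{\frac{m+3}{2},2}$ (which requires $m$ odd, so for even $m$ the inequality is strict).

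For $C_6$-free $G$ the same scheme applies, but the structural conclusion weakens to ``$H$ is $P_5$-free'', which permits components such as $C_4$ and $K_4$; for these $2e(H)-n_H^+$ can exceed $e(H)$, so the clean bound $\rho^2-\rho\le m-s$ can fail, and one must add a more delicate analysis — bounding the eigenvector mass carried by such dense components against the extra edges they force, and isolating the handful of small dense graphs ($K_5$ and its close relatives) that beat the bound only for small $m$, which is what pushes the threshold up to $m\ge 22$. I expect this to be the main obstacle: the estimate $x_w\le 1$ used on $N_2$ and on the leaves of $H$ is extremely lossy (on its own it recovers the theorem only for the triangle), so precisely in the near-extremal regime one must combine the decay of $\mathbf x$ away from $u$ with the cycle-free structure simultaneously, and it is the bookkeeping of that interplay that forces the lower bound on $m$ and accounts for the bulk of the case analysis.
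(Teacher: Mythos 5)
Your $C_5$ setup tracks the paper's quite closely: the identity you get by summing the eigenvalue equation over $N(u)$ is the paper's (2.2)/(5.3), the observation that $C_5$-freeness forces every component of $H=G[N(u)]$ to be a star or a triangle is Lemma \ref{le5.5}, and your bound $\rho^2-\rho\le m-s$ is equivalent to the paper's inequality (5.5) with $s$ playing the role of the number of star components. The $s\ge1$ inequality and the tracing of equality down to $S_{\frac{m+3}{2},2}$ are essentially sound. The genuine gap is your claim that the case where $H$ contains triangle components is settled by ``a direct estimate.'' That case is exactly the hard core of the paper's proof: the graphs $H_{t,s}\circ R_k$ (triangles through $u$ plus a pendant bipartite part) come very close to the bound, and near the threshold the slack your estimate produces per triangle is roughly $3\bigl(1-\tfrac{1}{\rho-2}\bigr)\approx 0.5$ when $m=8$ and $\rho\approx 3.2$ --- not enough to absorb the required $-1$ using only $x_w\le x_u$. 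The paper disposes of these graphs by (a) exploiting the extremality of $G^*$ and local switchings (Lemmas \ref{le2.1}, \ref{le2.2}) to pin the structure down to $H_{t,s}\circ R_k$ with $s\ge2$, $t\ge3$, hence $m\ge 6k+2s+5$, and then (b) an induction on $m$ whose base case $k=1$ uses the Sun--Das bound $\rho(G)^2\le\rho^2(G-v)+2d(v)-1$ and whose inductive step uses the Papendieck--Recht bound $x_u\le 1/\sqrt2$ to control the loss from deleting the three triangle edges of a $K_4$-block. Because you work with an arbitrary $G$ rather than the spectral-extremal one, the switching arguments underlying step (a) are not even available to you, so the ``severe restrictions'' you invoke on $N_2$-vertices adjacent to a $K_4$ cannot be obtained the way you suggest.

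Secondly, your $C_6$ discussion is a description of the difficulty rather than a proof. You correctly note that $P_5$-freeness of $H$ admits components such as $C_4$, $C_4+e$, $K_4$ and $K_{1,r}+e$, for which $2e(H)-n_H^+>e(H)$ and the clean $m-s$ bound fails; but the ``more delicate analysis'' you defer is precisely the content of the paper's Lemmas \ref{le6.1} and \ref{le6.2} plus the closing case analysis, and it is where the hypothesis $m\ge22$ is actually used. As written, the proposal establishes the theorem only in the sub-case of $C_5$-free graphs whose neighbourhood graph $H$ has a nontrivial star component and no triangle, which falls well short of the full statement.
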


The extremal question on cycles with consecutive lengths is also an important topic in extremal graph theory.
From \cite{BO},
we know that if $G$ is a graph of order $n$ with $m(G)>\lfloor\frac{n^2}4\rfloor$, then $G$ contains
a cycle of length $t$ for every  $t\leq \lfloor\frac{n+3}2\rfloor$.
Our last result follows the research of Nikiforov on the spectral conditions for the existence of cycles with consecutive lengths.
Nikiforov \cite{V9} proved that
if $G$ is a graph of order $n$ and $\rho(G)>\sqrt{\lfloor n^2/4\rfloor}$,
then $G$ contains a cycle of length $t$ for every $t\leq \frac{n}{320}$.
Recently, Ning and Peng \cite{NP} improved Nikiforov's result by showing $G$ contains a cycle of length $t$ for every $t\leq \frac{n}{160}$.
We prove an edge analogue result as follows.

\begin{thm}\label{th1.5}
Let $G$ be a graph of size $m$.
If $\rho(G)>\frac{{k-\frac12+\sqrt{4m+(k-\frac12)^2}}}2$,
then $G$ contains a cycle of length $t$ for every $t\leq 2k+2$.
\end{thm}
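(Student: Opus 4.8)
The plan is to extract a subgraph with large minimum degree from the spectral condition, and then apply a classical result of Bondy--Vince (or Erdős--Gallai type) on cycles with consecutive lengths. First I would assume for contradiction that $G$ has no cycle of length $t$ for some $t \le 2k+2$. Let $x$ be the Perron eigenvector of $A(G)$ with $\max_i x_i = x_u = 1$ for some vertex $u$, and write $\rho = \rho(G)$. The key inequality to exploit is the standard one obtained from $\rho^2 x_u = \sum_{v \sim u} \sum_{w \sim v} x_w$: splitting the walk count according to whether $w = u$, $w \in N(u)$, or $w \notin N(u) \cup \{u\}$, one gets
\begin{equation*}
\rho^2 \le d(u) + \sum_{v \sim u}\sum_{\substack{w \sim v \\ w \ne u}} x_w \le d(u) + e(N(u)) \cdot 2 + \sum_{w \notin N[u]} x_w \cdot |N(w) \cap N(u)|,
\end{equation*}
and bounding $\sum x_w \le m/\text{(stuff)}$ — more precisely, I would use the cleaner route $\rho^2 - \rho \cdot c \le $ something linear in $m$ by considering $\rho^2 x_u - \rho \cdot (\text{const}) x_u$, which is exactly what produces a bound of the shape $\rho \le \tfrac12(k-\tfrac12) + \sqrt{m + \tfrac14(k-\tfrac12)^2}$, i.e. $\rho^2 - (k-\tfrac12)\rho \le m$. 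So the real content is: if $G$ is $C_t$-free for some $t \le 2k+2$, then $\rho^2 - (k - \tfrac12)\rho \le m$.

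The main step is therefore a structural/counting lemma: in a graph with no cycle of length $t$ for some specific $t \le 2k+2$, one can bound $\sum_{v \sim u}\sum_{w \sim v} x_w$ in terms of $m$ and a linear-in-$\rho$ correction. I would try to show that around the vertex $u$ of maximal eigenvector entry, the absence of some short cycle forces the number of ``short closed walks'' through $u$ to be small. Concretely, I expect to use the fact (Bondy, or Ning--Peng's lemma, or a direct argument) that a 2-connected graph with minimum degree $\ge k+1$ contains cycles of all lengths up to $2k+2$; so after passing to a suitable block, if $G$ avoids some $C_t$ with $t \le 2k+2$ then locally degrees are controlled. Alternatively — and this is probably the slicker route actually used — one bounds $\sum_{w} x_w$ over the second neighbourhood of $u$ by a Cauchy--Schwarz / edge-counting argument using that the bipartite-like structure between $N(u)$ and $V \setminus N[u]$ cannot contain long paths, since a long path there plus $u$ would close up into a forbidden cycle.

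The hard part will be making the ``no $C_t$'' hypothesis yield a \emph{linear} (in $m$) bound with precisely the coefficient $k - \tfrac12$ in front of $\rho$; getting the right constant, rather than just $O(\rho)$, is where the argument has to be tight. I would handle this by carefully choosing which walks through $u$ to count: roughly, walks of length $2$ from $u$ contribute $\sum_{v\sim u} d(v) \le 2m - (\text{something})$, and the correction term comes from double-counting edges inside $N(u)$ and paths of bounded length — the bound on path lengths inside $N(u) \cup N_2(u)$ being exactly $k$-ish because a path of length $k$ there, combined with two edges to $u$, would create a cycle of some length $\le 2k+2$. Finally, once $\rho^2 - (k-\tfrac12)\rho \le m$ is established for every $C_t$-free graph with $t \le 2k+2$, solving the quadratic gives $\rho \le \tfrac12(k - \tfrac12) + \sqrt{m + \tfrac14(k-\tfrac12)^2}$, contradicting the hypothesis; hence $G$ contains $C_t$ for every $t \le 2k+2$, as claimed. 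I do not expect an extremal-graph characterization here (the theorem states only a strict inequality), which simplifies matters — no uniqueness analysis is needed, only the bound.
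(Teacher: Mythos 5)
Your target inequality is the right one: the paper also reduces the theorem to showing that $\rho^2-(k-\tfrac12)\rho\le m$ for any graph missing some $C_t$ with $t\le 2k+2$, and the mechanism is the one you gesture at --- the absence of $C_t$ forbids a path on $t-1$ vertices inside a neighbourhood $N(j)$, so the Erd\H{o}s--Gallai bound $ex(n,P_{k+1})\le\tfrac12(k-1)n$ controls $e(N(j))$; conversely, $e(N(j^*))>\tfrac12(2k-1)d(j^*)$ for some vertex $j^*$ forces a $P_{2k+1}$ in $N(j^*)$, which closes through $j^*$ into cycles of \emph{every} length $3,\dots,2k+2$ (note you need the full $P_{2k+1}$ here, not merely ``a path of length $k$-ish''). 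However, your concrete setup has a genuine gap: you anchor the argument at the vertex $u$ of maximal Perron entry. For that vertex $\rho x_u=\sum_{v\sim u}x_v\le d(u)x_u$, so $d(u)\ge\rho$, and the Erd\H{o}s--Gallai bound $e(N(u))\le(k-\tfrac12)d(u)$ then points the wrong way: it does not yield $e(N(u))\le(k-\tfrac12)\rho$, so the walk-count inequality $\rho^2\le m+e(N(u))$ cannot be converted into $\rho^2-(k-\tfrac12)\rho\le m$. The degree of the max-entry vertex is bounded below, not above, by $\rho$, and nothing in your sketch controls it from above.

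The paper sidesteps this with a different normalization, and this is the missing idea. Take an eigenvector $Y$ with $\sum_i y_i=1$ and set $f(A)=A^2-(k-\tfrac12)A$; then $f(\rho)=\sum_j f_j(G)\,y_j$, where $f_j(G)$ is the $j$-th \emph{column sum} of $f(A)$, so $f(\rho)>m$ forces $f_{j^*}(G)>m$ for some $j^*$ by averaging --- no max-entry vertex is involved. Since $f_j(G)=d(j)+2e(N(j))+e(N(j),U_j)-(k-\tfrac12)d(j)\le m+e(N(j))-(k-\tfrac12)d(j)$, the degree term now enters with a negative sign, and the conclusion $e(N(j^*))>\tfrac12(2k-1)d(j^*)$ follows with the inequality on the correct side. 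From there Erd\H{o}s--Gallai and the path-closing step finish the proof exactly as you anticipate, and you are right that no extremal characterization is needed. Without replacing your max-entry anchor by this column-sum averaging (or some equivalent device), the argument as proposed does not close.
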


Interestingly, Nikiforov \cite{Niki1} proved that if $G$ is a graph of order $n$ with $\rho(G)>
\frac{{k+\sqrt{4kn+k^2-4k}}}2,$
then $G$ contains an even cycle $C_{t}$ for every even number $t\leq 2k+2$.

\section{Basic lemmas}\label{se2}

We need introduce some notations. For a graph $G$ and a subset $S\subseteq V(G)$,
let $G[S]$ denote the subgraph of $G$ induced by $S$.
Let $e(G)$ denote the size of $G$.
For two vertex subsets $S$ and $T$ of $G$ (where $S\cap T$ may not be empty),
let $e(S, T)$ denote the number of edges with one endpoint in $S$ and the other in $T$.
$e(S, S)$ is simplified by $e(S)$.
For a vertex $v\in V(G)$, let $N(v)$ be the neighborhood of $v$, $N[v]=N(v)\cup \{v\}$ and
$N^2(v)$ be the set of vertices of distance two to $v$.
In particular, let $N_S(v)=N(v)\cap S$ and $d_S(v)=|N_S(v)|$.

It is known that $A(G)$ is irreducible
nonnegative for a connected graph $G$. From the Perron-Frobenius Theorem, there is a unique positive
unit eigenvector corresponding to $\rho(G)$, which is called the \emph{Perron vector} of $G$.

\begin{lem}{\rm(\cite{V6})} \label{le2.1} 
Let $A$ and $A'$ be the adjacency matrices of two connected graphs $G$ and $G'$ on the same vertex set.
Suppose that $N_G(u)\varsubsetneqq N_{G'}(u)$ for some vertex $u$.
If the Perron vector $X$ of $G$ satisfies $X^TA'X\geq X^TAX$,
then $\rho(G')>\rho(G)$.
\end{lem}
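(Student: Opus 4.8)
The plan is to combine the Rayleigh quotient characterisation of the spectral radius with the strict positivity of the Perron vector guaranteed by connectedness. First I would record the weak inequality. Since $X$ is a unit vector and $A'$ is symmetric, the variational principle gives $\rho(G')=\max_{\|Y\|=1}Y^TA'Y\geq X^TA'X$. Because $X$ is the Perron vector of $G$ we have $X^TAX=\rho(G)$, and the hypothesis $X^TA'X\geq X^TAX$ then yields the chain
\[
\rho(G')\geq X^TA'X\geq X^TAX=\rho(G).
\]
This already delivers $\rho(G')\geq\rho(G)$; the whole content of the lemma is to upgrade this to a strict inequality, and for that the extra edge at $u$ must be exploited.

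To obtain strictness I would argue by contradiction, assuming $\rho(G')=\rho(G)=:\rho$. Then every inequality in the displayed chain is an equality; in particular $X^TA'X=\rho=\rho(G')$ with $\|X\|=1$. Since $G'$ is connected, $\rho(G')$ is the largest eigenvalue of the symmetric matrix $A'$, and the maximum of the Rayleigh quotient over the unit sphere is attained exactly on unit vectors lying in the corresponding eigenspace. Hence $X$ is itself a $\rho$-eigenvector of $A'$, so $A'X=\rho X$. Combined with $AX=\rho X$ this forces $(A'-A)X=0$.

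The contradiction then comes from reading off the $u$-th coordinate. Because $G$ is connected, the Perron vector $X$ is entrywise positive, so $x_v>0$ for every vertex $v$. The $u$-th coordinate of $(A'-A)X$ equals $\sum_{v\in N_{G'}(u)}x_v-\sum_{v\in N_G(u)}x_v$, and since $N_G(u)\varsubsetneqq N_{G'}(u)$ this reduces to $\sum_{v\in N_{G'}(u)\setminus N_G(u)}x_v$, a sum over a nonempty set of strictly positive terms, hence strictly positive. This contradicts $(A'-A)X=0$, so $\rho(G')=\rho(G)$ is impossible and therefore $\rho(G')>\rho(G)$.

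The step I expect to need the most care is the identification of the equality case of the Rayleigh quotient: concluding from $X^TA'X=\rho(G')$ that $X$ actually lies in the top eigenspace of $A'$, rather than merely attaining the extremal value. This is where symmetry of $A'$ together with the Perron--Frobenius guarantee that $\rho(G')$ is a genuine eigenvalue of $A'$ are both used, while the positivity of $X$ inherited from the connectedness of $G$ is what finally makes the coordinate argument decisive.
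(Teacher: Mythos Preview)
Your argument is correct. The Rayleigh quotient gives the weak inequality, and your equality-case analysis is sound: if $X^TA'X=\rho(G')$ with $\|X\|=1$ and $A'$ symmetric, then $X$ lies in the top eigenspace of $A'$, so $A'X=\rho X=AX$; reading off the $u$-th coordinate of $(A'-A)X=0$ then contradicts the strict containment $N_G(u)\varsubsetneqq N_{G'}(u)$ together with the entrywise positivity of $X$.

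As for comparison, note that the paper does not actually supply a proof of this lemma: it is quoted from \cite{V6} and used as a black box throughout. So there is no ``paper's own proof'' to compare against here. Your write-up is the standard Perron--Frobenius / Rayleigh quotient argument one would expect for this kind of edge-switching lemma, and it is complete as stated.
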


A graph is called \emph{2-connected}, if it is a connected graph without cut vertex.
A \emph{block} is a maximal 2-connected subgraph of a graph $G$.
In particular, an \emph{end-block} is a block containing at most one cut vertex of $G$.
Throughout the paper,
let $G^*$ denote an extremal graph with maximal spectral radius in $\mathbb{G}(m,F)$ for each fixed $F$.
Let $\rho^*=\rho(G^*)$ and
let $X^*$ be the Perron vector of $G^*$ with coordinate $x_{v}$ corresponding to the vertex $v\in V(G^*)$.
A vertex $u$ in $G^*$ is said to be an \emph{extremal vertex} and denoted by $u^*$, if $x_u=\max\{x_v~|~v\in V(G^*)\}$.
Let $N_i(u^*)=\{v~|~v\in N(u^*), d_{N(u^*)}(v)=i\}$.

\begin{lem} \label{le2.2}
If $F$ is a 2-connected graph and $u^*$ is an extremal vertex of $G^*$, then the following claims hold.

\noindent{$(i)$} $G^*$ is connected.

\noindent{$(ii)$} There exists no cut vertex in $V(G^*)\backslash \{u^*\}$,
and hence $d(u)\geq2$ for any $u\in V(G^*)\backslash N[u^*]$.

\noindent{$(iii)$} If $F$ is $C_4$-free, then $N(v_1)=N(v_2)$ for any non-adjacent vertices $v_1, v_2$ of degree two.
\end{lem}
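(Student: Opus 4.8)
The plan is to treat the three parts separately, each time arguing by contradiction: assume the asserted property fails, perform an edge‑count‑preserving local surgery on $G^*$ that keeps the graph $F$-free and does not decrease $\rho$, and then invoke Perron--Frobenius positivity — usually through Lemma~\ref{le2.1} — to upgrade this to a strict increase of $\rho$, contradicting the extremality of $G^*$. The two levers used throughout are that a $2$-connected graph cannot be split by deleting one vertex, and that $x_{u^*}=\max_{v}x_v$ for an extremal vertex. For part $(i)$, suppose $G^*$ has a component $G_1$ of maximum spectral radius and another component $G_2$ (which carries an edge, since $\mathbb{G}(m,F)$ admits no isolated vertices). Identifying a vertex of $G_2$ with a maximum‑weight vertex of $G_1$ produces a simple graph $G'$ with $m$ edges in which any copy of $F$ would use the identified vertex as a cut vertex, contradicting $2$-connectedness; hence $G'$ is $F$-free. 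The Perron vector of $G_1$ extended by zeros is a test vector showing $\rho(G')\ge\rho^*$, and iterating until the graph is connected, together with the fact that a vector with zero coordinates cannot be a Perron vector of a connected graph, forces strict inequality somewhere, contradicting extremality. Thus $G^*$ is connected and $u^*$ is well defined.

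For part $(ii)$, suppose $w\neq u^*$ is a cut vertex and let $H$ be a component of $G^*-w$ avoiding $u^*$. Rerouting every edge $wv$ with $v\in V(H)$ to the edge $u^*v$ gives $G'$ with the same number of edges, in which $u^*$ now separates $V(H)$ from the rest; as in part $(i)$, $2$-connectedness confines any copy of $F$ in $G'$ to $G'[V(H)\cup\{u^*\}]\cong G^*[V(H)\cup\{w\}]$ or to $G'-V(H)=G^*-V(H)$, so $G'$ is $F$-free. Since $N_{G^*}(u^*)\subsetneq N_{G'}(u^*)$ and $(X^*)^{T}A(G')X^*-(X^*)^{T}A(G^*)X^*=2(x_{u^*}-x_w)\sum_{v\in N_H(w)}x_v\ge0$ (using that $u^*$ is extremal), Lemma~\ref{le2.1} gives $\rho(G')>\rho^*$, a contradiction. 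The consequence is then immediate: a vertex $u\notin N[u^*]$ of degree $1$ would have a neighbour $w\neq u^*$ whose removal isolates $u$, making $w$ a cut vertex other than $u^*$; and $d(u)\ge 1$ because $G^*$ is connected.

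For part $(iii)$, take non-adjacent $v_1,v_2$ of degree $2$ with $N(v_1)\neq N(v_2)$; by $(ii)$ neither is a cut vertex, so after renaming we may assume $v_1\neq u^*$ and $x_{v_1}\le x_{v_2}$. Delete the two edges at $v_1$ and join $v_1$ to the two vertices of $N(v_2)$, producing $G'$ in which $v_1,v_2$ are non-adjacent degree‑$2$ twins. A copy of $F$ through $v_1$ must contain both its neighbours, and if it also contained $v_2$ then $v_1,v_2$ and $N(v_2)$ would span a $C_4\subseteq F$, contradicting $C_4$-freeness of $F$ — so every copy of $F$ avoids $v_1$ (swapping $v_1,v_2$ if needed), giving $F\subseteq G'-v_1=G^*-v_1$, impossible; hence $G'$ is $F$-free, and it is connected since $G^*-v_1$ is. Using $\rho^*x_{v_i}=\sum_{v\in N(v_i)}x_v$ one computes $(X^*)^{T}A(G')X^*-(X^*)^{T}A(G^*)X^*=2\rho^*x_{v_1}(x_{v_2}-x_{v_1})\ge0$, so $\rho(G')\ge\rho^*$; if equality held, $X^*$ would be a Perron vector of the connected graph $G'$, and its eigenvalue equation at a vertex $a\in N(v_1)\setminus N(v_2)$ (nonempty precisely because $N(v_1)\neq N(v_2)$), which has lost $v_1$ as a neighbour, would read $\rho^*x_a=\rho^*x_a-x_{v_1}$, forcing $x_{v_1}=0$ — impossible. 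So $\rho(G')>\rho^*$, a contradiction.

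I expect part $(iii)$ to be the delicate one. Preserving $F$-freeness under the twin‑creating surgery is exactly where the hypothesis that $F$ is $C_4$-free cannot be dropped, since otherwise the two twins could be realised inside a copy of $F$; and the need to strictly beat, rather than merely match, $\rho^*$ is what forces the return to the Perron eigenvalue equation at a vertex that genuinely loses an edge — the only place where $N(v_1)\neq N(v_2)$ (as opposed to just $v_1\neq v_2$) is actually used. Similar equality‑case care, and attention to not disconnecting $G^*$ when relocating edges, is needed in parts $(i)$ and $(ii)$, but it is lighter there.
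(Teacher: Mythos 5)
Your proof is correct and follows essentially the same strategy as the paper's: in each part you perform an edge-count-preserving local surgery, use the $2$-connectivity of $F$ (and, in part $(iii)$, its $C_4$-freeness together with the degree-two twin structure) to preserve $F$-freeness, and conclude with a Perron--Frobenius strictness argument. The differences are only in execution --- vertex identification instead of edge relocation in $(i)$, a component of $G^*-w$ instead of an end-block in $(ii)$, and rewiring $v_1$ under the comparison $x_{v_1}\le x_{v_2}$ with a direct Rayleigh-quotient equality analysis instead of citing Lemma~\ref{le2.1} in $(iii)$.
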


\begin{proof} (i) Otherwise, let $H_1$ be a component of $G^*$ with $\rho(H_1)=\rho^*$ and $H_2$ be another component.
Selecting a vertex $v\in V(H_1)$ and an edge $w_1w_2\in E(H_2)$,
let $G$ be the graph obtained from $G^*$ by adding an edge $vw_1$ and deleting an edge $w_1w_2$ with possible isolated vertex.
Note that $vw_1$ is a cut edge in $G$. Then $G\in \mathbb{G}(m,F)$.
Moreover, $\rho(G)>\rho(H_1)=\rho^*$, a contradiction.

(ii) Suppose to the contrary that there exists a cut vertex in $V(G^*)\setminus\{u^*\}$.
Then $G^*$ has at least two end-blocks.
We may assume that $B$ is an end-block of $G^*$ with $u^*\notin V(B)$ and a cut vertex $v\in V(B)$.
Let $G'=G^*+\{u^*w~|~w\in N(v)\cap V(B)\}-\{vw~|~w\in N(v)\cap V(B)\}$.
Note that $F$ is 2-connected. Then $G'$ is $F$-free, and
$${X^*}^T(A(G')-A(G^*)){X^*}=\sum_{u_iu_j\in E(G')}2x_{u_i}x_{u_j}-
\sum_{u_iu_j\in E(G^*)}2x_{u_i}x_{u_j}=\sum_{w\in N(v)\cap V(B)}2(x_{u^*}-x_v)x_w\geq 0.$$
Note that $N_{G^*}(u^*)\varsubsetneqq N_{G'}(u^*)$.
By Lemma \ref{le2.1}, $\rho(G')>\rho^*$, a contradiction.

(iii) Let $v_1,v_2$ be a pair of non-adjacent vertices of $G^*$ with
$N(v_1)=\{u_1,u_2\}$ and $N(v_2)=\{w_1,w_2\}$, where $|N(v_1)\cap N(v_2)|\leq1$.
We may assume that $x_{u_1}+x_{u_2}\geq x_{w_1}+x_{w_2}$.
Define $G''=G^*+v_2u_1+v_2u_2-v_2w_1-v_2w_2$.
Notice that $F$ is $C_4$-free and $v_1,v_2$ are a pair of vertices of degree two with the same neighbourhood in $G''$.
Thus, $G''$ is still $F$-free.
However, by Lemma \ref{le2.1},
we have $\rho(G'')>\rho^*$, a contradiction.
\end{proof}

We simplify $A(G^*)$ by $A$. Then
\begin{eqnarray}\label{eq2.1}
\rho^*x_{u^*}=(AX^*)_{u^*}=\sum_{u\in N_0(u^*)}x_u+\sum_{u\in N(u^*)\setminus N_0(u^*)}x_u.
\end{eqnarray}
Notice that ${\rho^*}^2$ is the spectral radius of $A^2$. Thus,
\begin{eqnarray}\label{eq2.2}
{\rho^*}^2x_{u^*}=\sum_{u\in V(G^*)}a_{{u^*}u}^{(2)}x_u=d(u^*)x_{u^*}+\sum_{u\in N(u^*)\setminus N_0(u^*)}d_{N({u^*})}(u)x_u+\sum_{w\in N^2(u^*)}d_{N({u^*})}(w)x_w,
\end{eqnarray}
where $a_{{u^*}u}^{(2)}$ is the number of walks of length two from ${u^*}$ to the vertex $u$.
Equalities (\ref{eq2.1})
and (\ref{eq2.2}) will be frequently used in the following proof.

\section{Proof of Theorem \ref{th1.3}}\label{se3}

We first consider $K_{2,r+1}$-free graphs.
Let $G^*$ be an extremal graph for $\mathbb{G}(m,K_{2,r+1})$.
Note that $K_{1,m}\in\mathbb{G}(m,K_{2,r+1})$.
We have $\rho^*\geq \rho(K_{1,m})=\sqrt{m}$.
For convenience,
let $U=N(u^*)$ and $W=V(G^*)\setminus N[{u^*}]$, where $u^*$ is an extremal vertex of $G^*$.
Then (\ref{eq2.2}) becomes
\begin{eqnarray}\label{eq3.1}
{\rho^*}^2x_{u^*}=|U|x_{u^*}+\sum_{u\in U}d_{U}(u)x_u+\sum_{w\in W}d_{U}(w)x_w.
\end{eqnarray}

Notice that $G^*$ is $K_{2,r+1}$-free. Then we have the following claim.

\begin{cla} \label{cl3.1}
$d_U(u)\leq r$ for any vertex $u\in U\cup W.$
\end{cla}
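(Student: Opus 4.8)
The plan is a direct appeal to the definition of a forbidden subgraph, exploiting that $u^*$ is by construction adjacent to every vertex of $U$. Suppose, for contradiction, that $d_U(u)\geq r+1$ for some vertex $u\in U\cup W$, and pick $r+1$ distinct neighbours $v_1,\dots,v_{r+1}$ of $u$ lying in $U$. First observe that $u\neq u^*$, since $u\in U\cup W$ while $u^*\notin N[u^*]=U\cup\{u^*\}$ is disjoint from $W$ by definition. Next, each $v_i\in U=N(u^*)$, so $u^*v_i\in E(G^*)$; and $uv_i\in E(G^*)$ by the choice of the $v_i$. Finally the vertices $v_1,\dots,v_{r+1}$ are pairwise distinct, and none of them equals $u$ or $u^*$: they differ from $u$ because $G^*$ has no loops, and they differ from $u^*$ because $v_i\in U$ and $u^*\notin U$.

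Hence the $r+3$ vertices $u^*,u,v_1,\dots,v_{r+1}$ are distinct, and the edges $u^*v_i$ and $uv_i$ (for $1\leq i\leq r+1$) together form a copy of $K_{2,r+1}$ with $2$-side $\{u^*,u\}$ and $(r+1)$-side $\{v_1,\dots,v_{r+1}\}$; any additional edges among these vertices (for instance $u^*u$ if $u\in U$) do no harm since we only need $K_{2,r+1}$ as a subgraph. This contradicts $G^*\in\mathbb{G}(m,K_{2,r+1})$, so $d_U(u)\leq r$ for every $u\in U\cup W$.

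\textbf{Remark on difficulty.} There is essentially no obstacle here: the only point requiring a moment's care is verifying that the $r+1$ chosen common neighbours, together with $u^*$ and $u$, really are $r+3$ distinct vertices, which is immediate from $u\neq u^*$ and $u^*\notin U$. The substance of the argument is simply that $u^*$ serves as a universal "partner'' for $U$, so a vertex with $r+1$ neighbours inside $U$ instantly completes a $K_{2,r+1}$.
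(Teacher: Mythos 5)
Your proof is correct and is exactly the argument the paper has in mind: the paper states Claim 3.1 without proof as an immediate consequence of $K_{2,r+1}$-freeness, since $u$ and $u^*$ together with $r+1$ common neighbours in $U=N(u^*)$ would form a $K_{2,r+1}$. (One sentence of yours is slightly garbled --- $u^*\in N[u^*]$, of course --- but the intended facts $u^*\notin U$ and $u^*\notin W$, hence $u\neq u^*$, are right.)
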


Furthermore, we give the following claims.
\begin{cla} \label{cl3.2}
If $e(U)=0$, then $G^*\cong K_{1,m}$.
\end{cla}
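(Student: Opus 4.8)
The plan is to argue by contradiction. If $W=\emptyset$ the claim is immediate: then $e(U)=0$ means every edge of $G^*$ is incident with $u^*$, so $G^*\cong K_{1,m}$. So assume $W\neq\emptyset$ and aim for a contradiction.

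First I would squeeze the eigenvalue equation (\ref{eq3.1}). Since $K_{1,m}\in\mathbb{G}(m,K_{2,r+1})$ we have ${\rho^*}^2\geq m$, and since $u^*$ is extremal we have $x_w\leq x_{u^*}$ for all $w$. As $e(U)=0$, the term $\sum_{u\in U}d_U(u)x_u$ in (\ref{eq3.1}) vanishes, and bounding each $x_w$ by $x_{u^*}$ and using $\sum_{w\in W}d_U(w)=e(U,W)$ gives
\[
m\,x_{u^*}\le {\rho^*}^2 x_{u^*}=|U|\,x_{u^*}+\sum_{w\in W}d_U(w)\,x_w\le\big(|U|+e(U,W)\big)x_{u^*}.
\]
On the other hand, partitioning $E(G^*)$ according to $\{u^*\},U,W$ and using $e(U)=0$ yields $m=|U|+e(U,W)+e(W)$. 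Comparing the two, $e(W)=0$ and every inequality above is an equality; in particular $\sum_{w\in W}d_U(w)(x_{u^*}-x_w)=0$. Since $e(W)=0$ forces $d_U(w)=d(w)\geq1$ for each $w\in W$ (no isolated vertices, or by Lemma \ref{le2.2}(ii)), we deduce $x_w=x_{u^*}$ for every $w\in W$.

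The finish uses that each $w\in W$ is now itself an extremal vertex. Fix $w_0\in W$ and apply the eigenvalue equation at $w_0$: $\rho^*x_{w_0}=\sum_{v\in N(w_0)}x_v\leq d(w_0)\,x_{w_0}$, so $\rho^*\leq d(w_0)=d_U(w_0)\leq r$ by Claim \ref{cl3.1}. But $\rho^*\geq\sqrt m\geq 4r$ because $m\geq 16r^2$, a contradiction. Hence $W=\emptyset$, and therefore $G^*\cong K_{1,m}$.

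The one step that needs genuine care is the equality analysis: one must make sure the chain of inequalities cannot be tight while some $w\in W$ has $d_U(w)=0$ (so that its weight would never get pinned to $x_{u^*}$), which is precisely why we first extract $e(W)=0$, and hence $d_U(w)\geq1$, before concluding $x_w=x_{u^*}$. After that the contradiction is merely the numerical gap between $\rho^*\geq\sqrt m$ and the local degree bound $d_U\leq r$ of Claim \ref{cl3.1}, and that is exactly where the hypothesis $m\geq 16r^2$ is spent.
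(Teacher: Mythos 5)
Your proof is correct. The first half (deriving $e(W)=0$ from the eigenvalue identity (\ref{eq3.1}) together with $\rho^*\geq\sqrt m$ and the edge count $m=|U|+e(U,W)+e(W)$) is essentially identical to the paper's. The endgame differs: the paper observes that $e(U)=e(W)=0$ makes $G^*$ bipartite, hence triangle-free, invokes Theorem \ref{th1.1} to conclude $G^*\cong K_{s,t}$ with $\rho^*=\sqrt m$, and then uses $m\geq 16r^2$ to force $s=1$ (otherwise $K_{2,r+1}\subseteq K_{s,t}$). You instead push the equality analysis further: tightness of $\sum_{w\in W}d_U(w)x_w\leq e(U,W)x_{u^*}$ pins $x_w=x_{u^*}$ for every $w\in W$ (using $d_U(w)=d(w)\geq 1$ once $e(W)=0$), and then the local eigenvalue equation at such a $w$ gives $\rho^*\leq d_U(w)\leq r$ by Claim \ref{cl3.1}, contradicting $\rho^*\geq\sqrt m\geq 4r$. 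Your route has the advantage of being self-contained — it never appeals to Nikiforov's Theorem \ref{th1.1} — at the cost of a slightly more delicate equality discussion, which you handle correctly (in particular you are right to extract $e(W)=0$ before concluding $x_w=x_{u^*}$, so that no vertex of $W$ escapes with $d_U(w)=0$). Both arguments use only a weak consequence of $m\geq 16r^2$.
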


\begin{proof} Note that $\rho^*\geq\sqrt{m}$.
By (\ref{eq3.1}), we have
$$mx_{u^*}\leq{\rho^*}^2x_{u^*}\leq(|U|+2e(U)+e(U,W))x_{u^*}.$$
This implies that $e(W)\leq e(U).$
If $e(U)=0$, then $e(W)=0$. Therefore, $G^*$ is a bipartite graph and thus $G^*$ is triangle-free.
By Theorem \ref{th1.1}, $G^*\cong K_{s,t}$ for some $s$ and $t$ with $s\leq t$.
Note that $m\geq 16r^2$. If $s\geq2$, then $G^*$ contains $K_{2,r+1}$ as a subgraph.
Hence $s=1$, as desired.
\end{proof}

Now let $U_1=\{u\in U~|~\sum_{w\in N_W(u)}d_W(w)>r^2\}$ and $U_2=U\setminus U_1$.

\begin{cla} \label{cl3.3}
If $U_1\neq \emptyset$, then $\frac12\sum_{u\in U_1}d_{U}(u)<e(W)$.
\end{cla}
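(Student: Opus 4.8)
The plan is to bound $e(W)$ from below by a double-counting argument applied to the quantity that defines $U_1$, and then to combine this with Claim~\ref{cl3.1}.

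For $u\in U$ write $f(u)=\sum_{w\in N_W(u)}d_W(w)$, so that $U_1=\{u\in U: f(u)>r^2\}$ by definition. First I would record the interchange-of-summation identity
$$\sum_{u\in U}f(u)=\sum_{u\in U}\sum_{w\in N_W(u)}d_W(w)=\sum_{w\in W}d_U(w)\,d_W(w),$$
which holds because, for each $w\in W$, the term $d_W(w)$ is counted exactly once for every neighbour of $w$ lying in $U$. Next, Claim~\ref{cl3.1} gives $d_U(w)\le r$ for every $w\in W$ (it is stated for all of $U\cup W$), so
$$\sum_{u\in U}f(u)\le r\sum_{w\in W}d_W(w)=2r\,e(W).$$
Since each $u\in U_1$ has $f(u)>r^2$ and $U_1\neq\emptyset$, it follows that
$$r^2\,|U_1|<\sum_{u\in U_1}f(u)\le\sum_{u\in U}f(u)\le 2r\,e(W),$$
hence $\tfrac r2|U_1|<e(W)$. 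Finally, applying Claim~\ref{cl3.1} once more to the vertices $u\in U_1\subseteq U$ yields $d_U(u)\le r$, and therefore $\tfrac12\sum_{u\in U_1}d_U(u)\le\tfrac r2|U_1|<e(W)$, which is exactly the claimed inequality.

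I do not anticipate a genuine obstacle: the argument is essentially one double count plus two applications of the degree bound $d_U(\cdot)\le r$. The two points to watch are (a) that Claim~\ref{cl3.1} really does bound $d_U$ for vertices of $W$ as well as of $U$ — both steps above rely on this — and (b) that the strict inequality $f(u)>r^2$ from the definition of $U_1$, together with $U_1\neq\emptyset$, is what forces the final inequality to be strict. It is also worth noting that summing over $w\in W$ rather than over the edges of $G^*[W]$ is what keeps the constant at the sharp value $2r$; a cruder bookkeeping (charging each edge of $G^*[W]$ to its endpoints' neighbourhoods in $U$) would only give $\tfrac12\sum_{u\in U_1}d_U(u)<2e(W)$, which would not suffice.
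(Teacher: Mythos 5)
Your proof is correct and follows essentially the same double-counting argument as the paper: both bound $\sum_{u\in U_1}\sum_{w\in N_W(u)}d_W(w)$ above by $2r\,e(W)$ using the $K_{2,r+1}$-free degree bound $d_U(w)\le r$ (the paper phrases this as each $d_W(w)$ being counted at most $r$ times, restricting to $N_W(U_1)$, while you sum over all of $U$), and both then conclude via $r^2|U_1| < 2r\,e(W)$ and $d_U(u)\le r$. The two write-ups differ only in bookkeeping, not in substance.
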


\begin{proof}
Denote $N_W(U_1)=\cup_{u\in U_1}N_W(u)$. Given $w\in N_W(U_1)$.
By Claim \ref{cl3.1}, $d_{U_1}(w)\leq r$, and hence
$d_W(w)$ is counted at most $r$ times in $\sum_{u\in U_1}\sum_{w\in N_W(u)}d_W(w)$.
Thus we have
\begin{eqnarray}\label{eq3.2}
\sum_{w\in N_W(U_1)}d_W(w)\geq \frac 1{r}\sum_{u\in U_1}\sum_{w\in N_W(u)}d_W(w)>\frac 1{r}\sum_{u\in U_1}r^2=r|U_1|.
\end{eqnarray}
Note that $N_W(U_1)\subseteq W$. Thus by (\ref{eq3.2}),
$$e(W)=\frac 12\sum_{w\in W}d_W(w)\geq\frac 12\sum_{w\in N_W(U_1)}d_W(w)>\frac r2|U_1|.$$
It follows from Claim \ref{cl3.1} that
$$\frac12\sum_{u\in U_1}d_{U}(u)\leq \frac r2|U_1|<e(W),$$ as desired.
\end{proof}

Now let $U_2'=\{u\in U_2~|~d_W(u)>\frac {r(\rho^*+2r)}{2(\rho^*-r)}\}$ and $U_2''=U_2\setminus U_2'$.

\begin{cla} \label{cl3.4}
For any $u\in U_2'$, $\frac12d_U(u)x_u+\sum_{w\in N_W(u)}x_w<d_W(u)x_{u^*}.$
\end{cla}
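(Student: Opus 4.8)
The plan is to fix a vertex $u\in U_2'$ and bound the two terms on the left-hand side of the asserted inequality separately, drawing only on Claim~\ref{cl3.1}, the Perron eigenvalue equation applied to vertices of $W$, and the defining conditions of $U_2$ and $U_2'$. For the first term, since $x_{u^*}=\max_v x_v$ and $d_U(u)\le r$ by Claim~\ref{cl3.1}, one immediately gets $\frac12 d_U(u)x_u\le\frac r2 x_{u^*}$ with essentially no work. All the content lies in controlling $\sum_{w\in N_W(u)}x_w$.

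For that, I would use the fact that every $w\in W$ is non-adjacent to $u^*$, so $N(w)\subseteq U\cup W$; hence $\rho^* x_w=\sum_{v\in N(w)}x_v\le\bigl(d_U(w)+d_W(w)\bigr)x_{u^*}\le\bigl(r+d_W(w)\bigr)x_{u^*}$, using Claim~\ref{cl3.1} and $x_v\le x_{u^*}$. Summing this over $w\in N_W(u)$ and then invoking $u\in U_2$ (so that $\sum_{w\in N_W(u)}d_W(w)\le r^2$) would give
\[
\sum_{w\in N_W(u)}x_w\le\frac{x_{u^*}}{\rho^*}\Bigl(r\,d_W(u)+\sum_{w\in N_W(u)}d_W(w)\Bigr)\le\frac{x_{u^*}}{\rho^*}\bigl(r\,d_W(u)+r^2\bigr).
\]

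Combining the two bounds and dividing through by $x_{u^*}>0$ (the Perron vector is positive since $G^*$ is connected by Lemma~\ref{le2.2}(i), as $K_{2,r+1}$ is $2$-connected), the claim reduces to the elementary inequality $\frac r2+\frac{r\,d_W(u)+r^2}{\rho^*}<d_W(u)$, which after rearrangement is exactly $d_W(u)>\frac{r(\rho^*+2r)}{2(\rho^*-r)}$ --- the defining inequality of $U_2'$. The only point requiring care is the step that divides by $\rho^*-r$: this is legitimate and sign-preserving because $\rho^*\ge\sqrt m\ge 4r>r$ under the hypothesis $m\ge16r^2$. I do not anticipate a genuine obstacle here; the single idea that makes it work --- and that presumably recurs in the later claims --- is that the two complementary restrictions ``$u\in U_2$'' (an upper bound on $\sum_{w\in N_W(u)}d_W(w)$) and ``$u\in U_2'$'' (a lower bound on $d_W(u)$) have been calibrated precisely so that this trade-off closes, with the strict inequality in the hypothesis propagating to the strict inequality in the conclusion.
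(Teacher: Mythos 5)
Your proposal is correct and matches the paper's own argument essentially step for step: the paper likewise bounds $\rho^*\sum_{w\in N_W(u)}x_w\leq (r^2+r\,d_W(u))x_{u^*}$ via the eigenvalue equation at each $w\in N_W(u)$ together with Claim~\ref{cl3.1} and the defining bound of $U_2$, adds the trivial bound $\frac12 d_U(u)x_u\leq\frac r2 x_{u^*}$, and closes with the defining strict inequality of $U_2'$. No differences worth noting.
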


\begin{proof}
Let $u\in U_2'$.
By Claim \ref{cl3.1}, $d_U(u)\leq r$ and $|N_U(w)|\leq r$ for any $w\in N_W(u)$.
By the definition of $U_2$, $\sum_{w\in N_W(u)}|N_W(w)|\leq r^2.$
Thus,
\begin{eqnarray}\label{eq3.3}
\rho^*\sum_{w\in N_W(u)}x_w=\sum_{w\in N_W(u)}\rho^*x_w=\sum_{w\in N_W(u)}\left(\sum_{v\in N_W(w)}x_v+\sum_{v\in N_U(w)}x_v\right)\leq (r^2 +d_W(u)\cdot r)x_{u^*}.
\end{eqnarray}
Note that $d_W(u)>\frac {r(\rho^*+2r)}{2(\rho^*-r)}$.
By (\ref{eq3.3}) we have
$$\frac12d_U(u)x_u+\sum_{w\in N_W(u)}x_w\leq \frac r2 x_{u^*}+\frac r{\rho^*}(r+d_W(u))x_{u^*}<d_W(u)x_{u^*},$$
as claimed.\end{proof}

\begin{cla} \label{cl3.5}
For any $u\in U_2''$, $x_u<\frac12x_{u^*}.$
\end{cla}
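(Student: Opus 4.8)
The plan is to estimate $x_u$ directly from the eigenvalue equations, feeding in Claim~\ref{cl3.1} and the defining conditions of $U_2$ and $U_2''$.

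Since $u\in U=N(u^*)$, its neighbourhood is the disjoint union $N(u)=\{u^*\}\cup N_U(u)\cup N_W(u)$, so
\[
\rho^*x_u=x_{u^*}+\sum_{v\in N_U(u)}x_v+\sum_{w\in N_W(u)}x_w .
\]
By Claim~\ref{cl3.1} the middle sum is at most $d_U(u)\,x_{u^*}\le r\,x_{u^*}$. For the last sum I use that every $w\in W$ has $N(w)\subseteq U\cup W$, so by Claim~\ref{cl3.1}
\[
\rho^*x_w=\sum_{v\in N_U(w)}x_v+\sum_{v\in N_W(w)}x_v\le\bigl(d_U(w)+d_W(w)\bigr)x_{u^*}\le\bigl(r+d_W(w)\bigr)x_{u^*}.
\]
Summing this over $w\in N_W(u)$ and using that $u\in U_2$ gives $\sum_{w\in N_W(u)}d_W(w)\le r^2$ while $u\in U_2''$ gives $d_W(u)\le\tfrac{r(\rho^*+2r)}{2(\rho^*-r)}$, one obtains
\[
\rho^*\sum_{w\in N_W(u)}x_w\le\bigl(r\,d_W(u)+r^2\bigr)x_{u^*}\le\frac{3r^2\rho^*}{2(\rho^*-r)}\,x_{u^*},
\]
hence $\sum_{w\in N_W(u)}x_w\le\tfrac{3r^2}{2(\rho^*-r)}\,x_{u^*}$.

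Adding up the three contributions gives $\rho^*x_u\le\bigl(1+r+\tfrac{3r^2}{2(\rho^*-r)}\bigr)x_{u^*}$. Because $K_{1,m}\in\mathbb{G}(m,K_{2,r+1})$ we have $\rho^*\ge\sqrt m\ge 4r$ (here $m\ge16r^2$ is used), so $\rho^*-r\ge3r$, the bracketed factor is at most $1+\tfrac{3r}{2}$, and dividing by $\rho^*$ yields $x_u\le\tfrac{1+3r/2}{\rho^*}x_{u^*}\le\tfrac{2+3r}{8r}x_{u^*}\le\tfrac12 x_{u^*}$.

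The one point requiring care is that the claim asks for a \emph{strict} inequality. For $r\ge3$ we already have $\tfrac{2+3r}{8r}<\tfrac12$, and for every $r$ the chain is strict once $\rho^*>4r$ — in particular whenever $\rho^*>\sqrt m$. The remaining borderline case $r=2$, $m=16r^2$, $\rho^*=\sqrt m=8$ would force equality throughout the chain above, and I would eliminate it by reading off those equality conditions (they pin down $d_U(u)=r$ and $x_v=x_{u^*}$ for all $v\in N_U(u)$, so $u$ lies among several extremal vertices of degree $\ge\rho^*$, which a short count plays off against the relation $e(W)\le e(U)$ from the proof of Claim~\ref{cl3.2}). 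I expect this bookkeeping of constants and of the equality case, rather than any structural obstacle, to be the crux.
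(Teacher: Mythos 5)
Your derivation is the paper's own argument up to and including the key intermediate bound: the same decomposition of $\rho^*x_u$ over $\{u^*\}\cup N_U(u)\cup N_W(u)$, the same use of Claim~\ref{cl3.1} and of the defining conditions of $U_2$ and $U_2''$, arriving at exactly the paper's inequality (\ref{eq3.5}), namely $\rho^*x_u\leq\bigl(1+r+\tfrac{3r^2}{2(\rho^*-r)}\bigr)x_{u^*}$. From there you close the estimate by bounding the bracket by $1+\tfrac{3r}{2}$ and dividing by $\rho^*\geq 4r$, while the paper reduces the required inequality to ${\rho^*}^2-(3r+2)\rho^*-(r^2-2r)>0$; both routes hit the same borderline case $r=2$, $\rho^*=\sqrt{m}=4r$, where the chain only gives $x_u\leq\tfrac12x_{u^*}$.

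That borderline case is where your write-up has a genuine gap: you state what you ``would'' do (a count of extremal vertices of degree at least $\rho^*$ played off against $e(W)\leq e(U)$) but do not carry it out, and as sketched it is not clearly sufficient. The clean resolution — the one the paper uses — is already sitting in your own equality conditions. If every inequality in your chain is an equality, then in particular equality holds in $\rho^*\sum_{w\in N_W(u)}x_w\leq(r\,d_W(u)+r^2)x_{u^*}$; this forces $x_v=x_{u^*}$ for every $v\in N_U(w)$ with $w\in N_W(u)$, and it also forces $d_W(u)=\tfrac{r(\rho^*+2r)}{2(\rho^*-r)}=2>0$, so $N_W(u)\neq\emptyset$. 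Since $u$ itself lies in $N_U(w)$ for each such $w$, you get $x_u=x_{u^*}$, which contradicts the bound $x_u\leq\tfrac12x_{u^*}$ that the same chain produces. Replacing your proposed degree count with this two-line observation completes the proof and makes it coincide with the paper's.
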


\begin{proof}
Let $u\in U_2''$.
By (\ref{eq3.3}), we have
\begin{eqnarray}\label{eq3.4}
\rho^* x_u=x_{u^*}+\sum_{v\in N_U(u)}x_v+\sum_{w\in N_W(u)}x_w\leq \left(1+r+\frac r{\rho^*}(r+d_W(u))\right)x_{u^*}.
\end{eqnarray}
By the definition of $U_2''$, we have $d_W(u)\leq\frac {r(\rho^*+2r)}{2(\rho^*-r)}.$
Thus by (\ref{eq3.4}), we have
\begin{eqnarray}\label{eq3.5}
\rho^* x_u\leq \left(1+r+\frac {3r^2}{2(\rho^*-r)}\right)x_{u^*}.
\end{eqnarray}
In order to show that $x_u<\frac12x_{u^*},$ it suffices to show that $1+r+\frac {3r^2}{2(\rho^*-r)}<\frac{\rho^*}2$,
or equivalently, ${\rho^*}^2-(3r+2)\rho^*-(r^2-2r)>0.$
Since $r\geq2$ and $\rho^*\geq\sqrt{m}\geq4r$, we have
\begin{eqnarray}\label{eq3.6}
{\rho^*}^2-(3r+2)\rho^*-(r^2-2r)\geq 3r(r-2)\geq0.
\end{eqnarray}
And if equality in (\ref{eq3.6}) holds,
then all the above inequalities will be equalities.
In particular, (\ref{eq3.3}) becomes an equality.
This implies that $x_v=x_{u^*}$ for each $w\in N_W(u)$ and $v\in N_U(w)$.
Note that $u\in N_U(w)$ for each $w\in N_W(u)$.
Then $x_u=x_{u^*}$.
However,
(\ref{eq3.5}) and (\ref{eq3.6})
imply that $x_u\leq \frac12x_{u^*}$,
a contradiction.
Hence, (\ref{eq3.6}) is a strict inequality.
The claim follows.
\end{proof}

In the following, we give the proof of Theorem \ref{th1.3}~(i).

\begin{proof}
Note that $\rho^*\geq \sqrt{m}$.
According to (\ref{eq3.1}), we have
\begin{eqnarray}\label{eq3.7}
(m-|U|)x_{u^*}\leq\sum_{u\in U}d_{U}(u)x_u+\sum_{w\in W}d_{U}(w)x_w,
\end{eqnarray}
where $$\sum_{u\in U}d_{U}(u)x_u=\frac12\sum_{u\in U_1\cup U_2'}d_{U}(u)x_u+\sum_{u\in U_2''}d_{U}(u)x_u+\frac12\sum_{u\in U_1\cup U_2'}d_{U}(u)x_u.$$
On one hand, by Claim \ref{cl3.5}, $x_u<\frac12x_{u^*}$ for $u\in U_2''$.
Thus, 
\begin{eqnarray}\label{eq3.8}
\frac12\sum_{u\in U_1\cup U_2'}d_{U}(u)x_u+\sum_{u\in U_2''}d_{U}(u)x_u\leq \frac12\sum_{u\in U}d_{U}(u)x_{u^*}=e(U)x_{u^*},
\end{eqnarray}
where inequality is strict if $U_2''\neq \emptyset$ and $e(U_2'',U)\neq0$.
On the other hand,
by Claim \ref{cl3.3} and Claim \ref{cl3.4},
\begin{eqnarray}\label{eq3.9}
\frac12\sum_{u\in U_1\cup U_2'}d_{U}(u)x_u\leq e(W)x_{u^*}+\sum_{u\in U_2'}\left(d_W(u)x_{u^*}-\sum_{w\in N_W(u)}x_w\right),
\end{eqnarray}
where inequality is strict if $U_1\cup U_2'\neq \emptyset.$
Moreover,
$$\sum_{u\in U_2'}\left(d_W(u)x_{u^*}-\sum_{w\in N_W(u)}x_w\right)\leq\sum_{u\in U}\left(d_W(u)x_{u^*}-\sum_{w\in N_W(u)}x_w\right)
=e(U,W)x_{u^*}-\sum_{w\in W}d_{U}(w)x_w.$$
Combining with (\ref{eq3.9}), we have
\begin{eqnarray}\label{eq3.10}
\frac12\sum_{u\in U_1\cup U_2'}d_{U}(u)x_u\leq (e(W)+e(U,W))x_{u^*}-\sum_{w\in W}d_{U}(w)x_w.
\end{eqnarray}
Combining (\ref{eq3.8}), (\ref{eq3.10}) with (\ref{eq3.7}),
we have $$(m-|U|)x_{u^*}\leq(e(U)+e(W)+e(U,W))x_{u^*}=(m-|U|)x_{u^*}.$$
This implies that, if $m\neq|U|$, then both (\ref{eq3.8}) and (\ref{eq3.9}) are equalities. 
Recall that if (\ref{eq3.9}) is an equality then $U_1\cup U_2'=\emptyset$ (i.e., $U=U_2''$),
and if (\ref{eq3.8}) is an equality then either $U_2''=\emptyset$ or $e(U_2'',U)=0$.
Note that $U\neq\emptyset$. It follows that $U_2''=U\neq \emptyset$ and thus $e(U)=0$.
By Claim \ref{cl3.2}, we have $G^*\cong K_{1,m}$.
If $m=|U|$, we also have $G^*\cong K_{1,m}$.
This completes the proof.
\end{proof}

In what follows, we consider $\{C_3^+,C_4^+\}$-free graphs.
Before the final proof, we shall give some lemmas on characterizing the extremal graph $G^*$.
Recall that $S_n^k$ is
the graph obtained from $K_{1,n-1}$ by adding $k$ disjoint edges within its independent set.

\begin{lem} \label{le4.1}(\cite {V6})
$\rho(S_n^k)$ is the largest root of the polynomial
$$f(x)=x^3-x^2-(n-1)x+n-1-2k.$$
Moreover, $\rho(S_n^1)>\sqrt{n}$ for $4\leq n\leq 8.$
\end{lem}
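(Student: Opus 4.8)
\textbf{Proof proposal for Lemma \ref{le4.1}.}
The plan is to treat $S_n^k$ as a graph whose Perron vector takes only a few distinct values, derive the characteristic equation from the eigenvalue equations at these value-classes, and then analyze the cubic $f(x)$ directly. Write $S_n^k$ with center $c$, with $k$ disjoint edges $a_1b_1,\dots,a_kb_k$ inside the independent set of $K_{1,n-1}$, and with the remaining $n-1-2k$ leaves. By symmetry the Perron vector assigns one value $x_c$ to the center, a common value $x_a$ to every endpoint of a matching edge, and a common value $x_\ell$ to every bare leaf. First I would write the three eigenvalue equations at $\rho:=\rho(S_n^k)$: namely $\rho x_c=(n-1-2k)x_\ell+2k\,x_a$, $\rho x_a=x_c+x_a$, and $\rho x_\ell=x_c$. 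From the last two, $x_\ell=x_c/\rho$ and $x_a=x_c/(\rho-1)$ (note $\rho>1$ since $S_n^k$ contains an edge and in fact a path $P_3$, so $\rho\geq\sqrt2$). Substituting into the first equation and clearing denominators gives $\rho^2(\rho-1)=(n-1-2k)(\rho-1)+2k\rho$, which rearranges to $\rho^3-\rho^2-(n-1)\rho+(n-1-2k)=0$; hence $\rho$ is a root of $f(x)=x^3-x^2-(n-1)x+n-1-2k$.

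Next I would argue that $\rho(S_n^k)$ is in fact the \emph{largest} root of $f$. Since $S_n^k$ is connected, $\rho=\rho(S_n^k)$ is a simple eigenvalue with a strictly positive eigenvector, so the computation above is valid and $\rho$ is genuinely a root of $f$. To see it is the largest root, note $f(x)\to+\infty$ as $x\to+\infty$, so it suffices to show $f(x)>0$ for all $x>\rho$; equivalently, that $\rho$ is the largest real root. One clean way: the matrix $A(S_n^k)$ has a quotient matrix on the three classes $\{c\},\{a_i,b_i\},\{\text{bare leaves}\}$ (an equitable-type partition after accounting for the within-class matching edges), and $f(x)$ is, up to sign, the characteristic polynomial of that $3\times 3$ quotient; the Perron root of the full matrix equals the Perron root of this nonnegative quotient matrix, which is its spectrally-largest eigenvalue, i.e.\ the largest root of $f$. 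Alternatively, and more elementarily, I would just verify $f'(x)>0$ and $f(x)>0$ for $x\geq n-1$ (easy for $n\geq 2$) together with the sign pattern of $f$ to pin down that the largest root lies in $(\sqrt{n-1},\,n-1)$ and coincides with $\rho$.

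Finally, for the inequality $\rho(S_n^1)>\sqrt n$ when $4\leq n\leq 8$, I would simply evaluate $f$ at $x=\sqrt n$ with $k=1$:
\begin{eqnarray*}
f(\sqrt n)=n\sqrt n-n-(n-1)\sqrt n+n-1-2=\sqrt n-3.
\end{eqrt*}
Wait—that last environment is wrong; I will instead write it inline: $f(\sqrt n)=n\sqrt n-n-(n-1)\sqrt n+n-3=\sqrt n-3$. For $4\leq n\leq 8$ we have $2\le\sqrt n\le\sqrt 8<3$, so $f(\sqrt n)=\sqrt n-3<0$. Since $f(x)\to+\infty$ and the largest root of $f$ is $\rho(S_n^1)$, the sign change forces $\rho(S_n^1)>\sqrt n$. (One should also check $\sqrt n$ exceeds the two smaller roots so that ``$f<0$ at $\sqrt n$'' really places $\sqrt n$ below the top root; this follows because the smaller roots of $f$ are at most $1$ in absolute value for the relevant $n$, or again from the quotient-matrix interpretation.)

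\textbf{Main obstacle.} The one genuinely non-routine point is justifying rigorously that $\rho(S_n^k)$ is the \emph{largest} root of the cubic rather than merely \emph{a} root — i.e., ruling out that the Perron value is one of the two smaller roots. The quotient-matrix / equitable-partition argument handles this cleanly (the Perron root of a nonnegative matrix dominates the other eigenvalues of its quotient), and is the step I would write out most carefully; everything else is bookkeeping with the cubic and a one-line evaluation at $\sqrt n$.
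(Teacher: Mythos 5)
The paper does not prove this lemma at all: it is quoted verbatim from Nikiforov's $C_4$ paper \cite{V6}, so there is no in-paper argument to compare against. Your proof is correct and is the natural one. The symmetry computation is right: the three eigenvalue equations give $x_a=x_c/(\rho-1)$, $x_\ell=x_c/\rho$, and clearing denominators yields exactly $\rho^3-\rho^2-(n-1)\rho+(n-1-2k)=0$; the partition $\{c\}\cup\{a_i,b_i\}\cup\{\text{bare leaves}\}$ is indeed equitable, its quotient matrix has characteristic polynomial $f$, and since every eigenvalue of the quotient is an eigenvalue of $A(S_n^k)$ while $\rho(S_n^k)$ is an eigenvalue of the quotient (the Perron vector is constant on classes), $\rho(S_n^k)$ is the largest root of $f$. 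The evaluation $f(\sqrt n)=\sqrt n-3<0$ for $4\le n\le 8$ is also correct and consistent with the paper's Lemma \ref{le4.2}, where $n=9$ gives equality. One simplification: the closing parenthetical about checking that $\sqrt n$ exceeds the two smaller roots is unnecessary. Since $f$ is monic of odd degree, $f(x)>0$ for every $x$ strictly greater than its largest real root, so $f(\sqrt n)<0$ by itself forces $\sqrt n<\rho(S_n^1)$; no information about the other roots is needed. (Also fix the broken \verb|eqnarray*| environment in your writeup; the inline restatement you gave is the one to keep.)
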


\begin{rem}\label{re2.1}
Note that $S_m^1$ is a $\{C_3^+, C_4^+\}$-free graph of size $m$.
Lemma \ref{le4.1} implies that the condition $m\geq9$
in Theorem \ref{th1.3}~(ii) is best possible.
\end{rem}

Lemma \ref{le4.1} also implies the following result.

\begin{lem} \label{le4.2}
$\rho(S_n^k)=\sqrt{e(S_n^k)}=3$ for $(n,k)\in\{(9,1),(8,2),(7,3)\}$.
\end{lem}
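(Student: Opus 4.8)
The plan is to prove Lemma~\ref{le4.2} as a direct computational consequence of Lemma~\ref{le4.1}, so the argument is short but I will lay out the steps carefully. First I would record that $e(S_n^k)=(n-1)+k$, since $S_n^k$ consists of a star $K_{1,n-1}$ (contributing $n-1$ edges) together with $k$ disjoint extra edges inside the independent set. Hence for $(n,k)\in\{(9,1),(8,2),(7,3)\}$ one checks $e(S_n^k)=9$ in each case, so $\sqrt{e(S_n^k)}=3$; it remains to show $\rho(S_n^k)=3$ for these three pairs.

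By Lemma~\ref{le4.1}, $\rho(S_n^k)$ is the largest root of $f(x)=x^3-x^2-(n-1)x+n-1-2k$. I would substitute $x=3$ and simplify: $f(3)=27-9-3(n-1)+(n-1)-2k=18-2(n-1)-2k=20-2n-2k$. This vanishes exactly when $n+k=10$, which holds for all three pairs $(9,1),(8,2),(7,3)$. Therefore $3$ is a root of $f$ in each case. To conclude $\rho(S_n^k)=3$ I must verify $3$ is the \emph{largest} root; I would factor $f(x)=(x-3)(x^2+2x-(n-1-2k)/\,\cdot)$ — more precisely, carry out the polynomial division of $f$ by $(x-3)$ to get $f(x)=(x-3)(x^2+bx+c)$ with the resulting quadratic, and then show its two roots are less than $3$ (e.g. by checking the quadratic is positive at $x=3$ and its vertex lies left of $3$, or simply that the sum and product of the remaining roots force them below $3$). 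Since each $S_n^k$ here is connected, Perron--Frobenius guarantees $\rho(S_n^k)$ is a genuine (simple, positive) eigenvalue, matching the largest root of $f$.

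There is essentially no obstacle here: the only thing to be slightly careful about is the sign analysis of the residual quadratic factor, to be certain the root $x=3$ really is the spectral radius and not merely some eigenvalue. I would handle all three cases $(9,1),(8,2),(7,3)$ uniformly through the identity $f(3)=20-2n-2k$ and the common value $n+k=10$, rather than treating them separately, which keeps the verification to a single line of arithmetic plus one factorization.
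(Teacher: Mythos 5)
Your proposal is correct and follows essentially the same route as the paper: both use Lemma \ref{le4.1}, verify that $x=3$ is a root via the relation $n+k=10$, and then confirm it is the largest root by examining the residual quadratic factor (the paper writes it explicitly as $\rho^2+2\rho-n+7$ and uses $n\leq 9$). The only difference is that you defer the factorization to a described step rather than writing it out, but the plan is sound and matches the published argument.
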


\begin{proof}
For $(n,k)\in\{(9,1),(8,2),(7,3)\}$, we have $e(S_n^k)=9$ and $k=10-n$.
Let $\rho=\rho(S_n^k)$. By Lemma \ref{le4.1}, we have
$$\rho^3-\rho^2-(n-1)\rho+3(n-7)=(\rho-3)(\rho^2+2\rho-n+7)=0.$$
Note that $n\leq9$. The largest root of above polynomial is $\rho=3$, as desired.
\end{proof}

Recall that $N_i(u^*)=\{u~|~u\in N(u^*), d_{N(u^*)}(u)=i\}$.
Since $G^*$ is $C_3^+$-free, we can see that $G^*[N(u^*)]$ may consist of isolated vertices and isolated edges.
Thus, $N(u^*)=N_0(u^*)\cup N_1(u^*)$.
Let $N_i^2(u^*)=\{w~|~w\in N^2(u^*), d_{N_i(u^*)}(w)\geq1\}$ for $i\in\{0,1\}$.
Note that $G^*$ is $\{C_3^+, C_4^+\}$-free.
Then $N_0^2(u^*)\cap N_1^2(u^*)=\emptyset$ and $d_{N(u^*)}(w)=1$ for any $w\in N_1^2(u^*).$
Since $G^*$ is an extremal graph, $\rho^*\geq\rho(K_{1,m})=\sqrt{m}$.
Combining with (\ref{eq2.2}),
we have
\begin{eqnarray}\label{eq4.1}
(m-d(u^*))x_{u^*}\leq\sum_{u\in N_1(u^*)}x_u+\sum_{w\in N_1^2(u^*)}x_w+\sum_{w\in N_0^2(u^*)}d_{N(u^*)}(w)x_w.
\end{eqnarray}
By (\ref{eq4.1}) and the definition of $u^*$, we have
\begin{eqnarray}\label{eq4.2}
(m-d(u^*))x_{u^*}\leq(2e(N_1({u^*}))+e(N^2({u^*}), N({u^*})))x_{u^*}.
\end{eqnarray}
Let $W=V(G^*)\setminus N[u^*]$. Note that $e(N(u^*))=e(N_1(u^*)).$
(\ref{eq4.1}) and (\ref{eq4.2}) imply that
\begin{eqnarray}\label{eq4.3}
e(W)\leq e(N_1({u^*})),
\end{eqnarray}
and if $e(W)=e(N_1({u^*}))$, then $x_u=x_{u^*}$ for any vertex $u\in N_1({u^*})\cup N^2({u^*})$.

\begin{lem} \label{le4.3}
If $m\geq13$, then $e(W)=0$.
\end{lem}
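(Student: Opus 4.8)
The plan is to show that if $e(W) \geq 1$, then we can construct a graph in $\mathbb{G}(m, \{C_3^+, C_4^+\})$ with strictly larger spectral radius than $G^*$, contradicting the extremality of $G^*$. The starting point is inequality (\ref{eq4.3}), namely $e(W) \leq e(N_1(u^*))$, which already constrains the edges hidden in $W$. If in fact $e(W) = e(N_1(u^*))$, we also get the rigidity statement that $x_u = x_{u^*}$ for every $u \in N_1(u^*) \cup N^2(u^*)$; I expect to exploit this to pin down the structure very tightly and then find an explicit improving perturbation. The case $e(W) < e(N_1(u^*))$ should be handled by a counting/weight argument directly: I would revisit (\ref{eq4.1}) and (\ref{eq4.2}) and observe that when $e(W)$ is strictly smaller than $e(N_1(u^*))$, the chain of inequalities has slack, which combined with the $C_3^+$- and $C_4^+$-freeness (so that each $w \in N^2(u^*)$ has at most one neighbor in $N(u^*)$ outside the exceptional $N_0^2(u^*)$ part, and $G^*[N(u^*)]$ is a matching) forces $m$ to be small, contradicting $m \geq 13$.

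First I would record the local structure around $u^*$: since $G^*$ is $C_3^+$-free, $G^*[N(u^*)]$ is a disjoint union of isolated vertices and isolated edges, so $N(u^*) = N_0(u^*) \cup N_1(u^*)$; since $G^*$ is also $C_4^+$-free, every $w \in N^2(u^*)$ adjacent to $N_1(u^*)$ has exactly one neighbor in $N(u^*)$, and $N_0^2(u^*) \cap N_1^2(u^*) = \emptyset$. Next I would suppose $e(W) \geq 1$, pick an edge $w_1 w_2 \in E(W)$, and use Lemma \ref{le2.2}(ii) (valid because $K_{2,r+1}$ — wait, here $F = \{C_3^+, C_4^+\}$ is not 2-connected, so I should instead argue directly): I would take an edge inside $W$ and try to reattach its endpoints to $u^*$ or to a vertex of large Perron weight, deleting a suitable edge to keep the size $m$ fixed, and verify via Lemma \ref{le2.1} that $X^{*T}(A(G') - A(G^*))X^* \geq 0$ while the neighborhoods strictly increase at some vertex. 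The key point making this work is that $x_{u^*}$ is the maximum coordinate, so moving an edge endpoint from a low-weight vertex in $W$ onto $u^*$ does not decrease the quadratic form; one must only check that the resulting graph stays $\{C_3^+, C_4^+\}$-free, which is where the global structure (the matching in $N(u^*)$, the pendant-like behavior at $N^2(u^*)$) is used to exclude the creation of a forbidden configuration.

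The main obstacle I anticipate is the $\{C_3^+, C_4^+\}$-freeness check for the perturbed graph: unlike the clean 2-connected case of Lemma \ref{le2.2}, moving edges around near $u^*$ can easily create a triangle sharing an edge with another triangle or with a $C_4$. To control this I would likely first prove finer claims — e.g. that $W$ induces a graph with no edges incident to $N_1^2(u^*)$, or that the edges of $W$ together with their attachment to $N(u^*)$ form a very restricted pattern — and then either reroute the edge of $W$ to become a pendant edge at $u^*$ (which cannot create any short cycle through a new triangle) or, in the equality case $e(W) = e(N_1(u^*))$, use the rigidity $x_u = x_{u^*}$ on $N_1(u^*) \cup N^2(u^*)$ to show $G^*$ must be a small specific graph whose size is below $13$. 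In either branch the contradiction with $m \geq 13$ closes the argument, giving $e(W) = 0$.
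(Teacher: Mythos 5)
There is a genuine gap. Your entire argument hinges on a rewiring step --- take an edge $w_1w_2$ inside $W$, reattach an endpoint to $u^*$ (or to a high-weight vertex), and invoke Lemma \ref{le2.1} --- but you explicitly defer the one step that makes or breaks this plan, namely verifying that the perturbed graph is still $\{C_3^+,C_4^+\}$-free, and in fact that verification generally fails. If $w_1\in N^2(u^*)$ has a neighbour $u\in N_1(u^*)$, then adding $u^*w_1$ creates the triangle $u^*uw_1$, which shares the edge $u^*u$ with the already-present triangle on $u^*$, $u$ and the matching-partner of $u$ in $N_1(u^*)$, i.e.\ a $C_3^+$; if instead $w_1$ has two neighbours in $N_0(u^*)$, the new edge closes a $C_4$ sharing an edge with the new triangle, i.e.\ a $C_4^+$. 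So the ``pendant edge at $u^*$'' move is typically illegal, and your fallback (``prove finer claims'' about how $W$ attaches, or ``use rigidity to show $G^*$ is a small specific graph'') is not carried out; as written the proof does not close. A smaller point: your worry that Lemma \ref{le2.2} is unavailable is unfounded --- both $C_3^+$ and $C_4^+$ are 2-connected, and the paper does use Lemma \ref{le2.2} here to get $d(w)\geq 2$ for $w\in W$.

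The paper's actual proof uses none of this perturbation machinery. Writing $E(G^*[N_1(u^*)])=\{u_{2i-1}u_{2i}\}_{i=1}^t$ (a matching, by $C_3^+$-freeness), it introduces the path count $\phi(u_i)=\sum_{w\in N_W(u_i)}d_W(w)$ and splits the matching edges into $A_1$ (where $\phi(u_{2i-1})+\phi(u_{2i})\leq 1$) and $A_2$ (where it is $\geq 2$). Disjointness of $N_W(u_{2i-1})$ and $N_W(u_{2i})$ gives $e(W)\geq|A_2|$, while an explicit eigenvector computation (using $\rho^*\geq\sqrt{13}$) gives $x_{u_{2i-1}}+x_{u_{2i}}<x_{u^*}$ for $i\in A_1$; feeding this into \eqref{eq4.1} shows $A_1\neq\emptyset$ would force $e(W)<|A_2|$, a contradiction. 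Hence $A_1=\emptyset$, the two bounds squeeze to $e(W)=|A_2|$ with full rigidity $x_u=x_{u^*}$ on $N_1(u^*)\cup N^2(u^*)$, which forces all those vertices to have degree at least $3$ and hence $\phi(u_{2i-1})+\phi(u_{2i})\geq 4$ for $i\in A_2$ --- contradicting the equality $\phi(u_{2i-1})+\phi(u_{2i})=2$. So $A_2=\emptyset$ and $e(W)=0$. The quantity $\phi$, the $A_1/A_2$ dichotomy, and the final degree-forcing step are the essential ideas, and none of them appear in your proposal.
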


\begin{proof}
If $N_1(u^*)=\emptyset$, then by (\ref{eq4.3}), $e(W)=0$.
In the following, let $N_1(u^*)\neq\emptyset$ and $E(G^*[N_1(u^*)])=\{u_{2i-1}u_{2i}~|~i=1,2,\ldots,t\}.$
Recall that $G^*[N_1(u^*)]$ consists of isolated edges.
For each vertex $u_i\in N_1(u^*)$,
let $\phi(u_i)$ be the number of paths $u_iww'$ with $w,w'\in W$.
Clearly, $\phi(u_i)=\sum_{w\in N_W(u_i)}d_W(w).$
Furthermore, we can divide $\{1,2,\ldots,t\}$ into two subsets $A_1$ and $A_2$,
where $$A_1=\{i~|~\phi(u_{2i-1})+\phi(u_{2i})\leq1\},~~ A_2=\{i~|~\phi(u_{2i-1})+\phi(u_{2i})\geq2\}.$$
Note that $G^*$ is $C_3^+$-free.
Thus, $N_W(u_{2i-1})\cap N_W(u_{2i})=\emptyset$ for $i\in \{1,2,\ldots,t\}$, and hence
\begin{eqnarray}\label{eq4.4}
e(W)\geq \frac12\sum_{i\in A_2}\left(\phi(u_{2i-1})+\phi(u_{2i})\right)\geq|A_2|.
\end{eqnarray}
Moreover, we have the following claim.

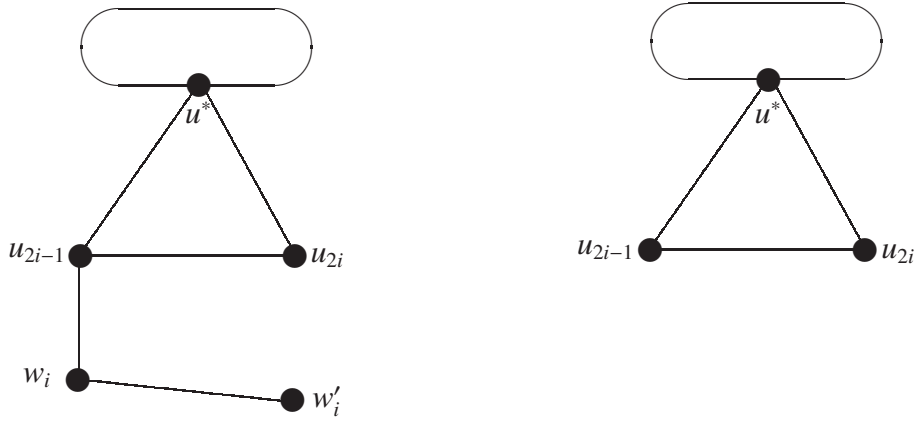
\begin{figure}[t]
\centering \setlength{\unitlength}{2.4pt}
\begin{center}
\unitlength 0.6mm 
\linethickness{0.4pt}
\ifx\plotpoint\undefined\newsavebox{\plotpoint}\fi 
\begin{picture}(186.75,89.3)(0,0)
\multiput(33,71.775)(-.03370098039,-.04840686275){816}{\line(0,-1){.04840686275}}
\put(6.5,33.275){\circle*{5}}
\multiput(33.25,71.525)(.0337078652,-.0613964687){623}{\line(0,-1){.0613964687}}
\put(6.5,33.275){\line(1,0){47.75}}
\put(32.75,71.025){\circle*{5}}
\put(54,33.275){\circle*{5}}
\put(32.25,79.525){\oval(51,17)[]}
\put(6.25,33.525){\line(0,-1){29}}
\multiput(6,5.775)(.320469799,-.033557047){149}{\line(1,0){.320469799}}
\put(6,5.775){\circle*{5}}
\put(53.5,1.275){\circle*{5}}
\put(32.75,65){\makebox(0,0)[cc]{$u^*$}}
\put(-3,34.025){\makebox(0,0)[cc]{$u_{2i-1}$}}
\put(61.25,32.775){\makebox(0,0)[cc]{$u_{2i}$}}
\multiput(159.275,73.05)(-.03370098039,-.04840686275){816}{\line(0,-1){.04840686275}}
\put(132.775,34.55){\circle*{5}}
\multiput(159.525,72.8)(.0337078652,-.0613964687){623}{\line(0,-1){.0613964687}}
\put(132.775,34.55){\line(1,0){47.75}}
\put(159.025,72.3){\circle*{5}}
\put(180.275,34.55){\circle*{5}}
\put(158.525,80.8){\oval(51,17)[]}
\put(159,65){\makebox(0,0)[cc]{$u^*$}}
\put(123,34.525){\makebox(0,0)[cc]{$u_{2i-1}$}}
\put(187.75,33.775){\makebox(0,0)[cc]{$u_{2i}$}}
\put(-3,6.275){\makebox(0,0)[cc]{$w_i$}}
\put(61.25,1.025){\makebox(0,0)[cc]{$w_i'$}}
\end{picture}
\end{center}
\caption{\footnotesize{The two cases when $i\in A_1$.}}\label{fig4.1}
\end{figure}

\begin{cla} \label{cl4.1}
$x_{u_{2i-1}}+x_{u_{2i}}<x_{u^*}$ for each $i\in A_1$.
\end{cla}

\begin{proof}
By Lemma \ref{le2.2},
$d(w)\geq2$ for any $w\in W$.
Recall that $d_{N(u^*)}(w)=1$ for any $w\in N_1^2(u^*)$.
Thus, $d_{W}(w)\geq1$ for any $w\in N_1^2(u^*)$.
Given $i\in A_1$. If $\phi(u_{2i-1})+\phi(u_{2i})=1$, then $|N_W(u_{2i-1})\cup N_W(u_{2i})|=1$.
Moreover, $d_W(w_i)=1$ for the unique vertex $w_i\in N_W(u_{2i-1})\cup N_W(u_{2i})$.
We may assume without loss of generality that $N_W(u_{2i-1})=\{w_i\}$ and $N_W(w_i)=\{w_i'\}$
(see Fig.\ref{fig4.1}).
Then $\rho^* x_{u_{2i}}=x_{u^*}+x_{u_{2i-1}}$ and $\rho^* x_{w_i}=x_{w_i'}+x_{u_{2i-1}}\leq \rho^* x_{u_{2i}}$.
Thus, $${\rho^*}^2 x_{u_{2i-1}}=\rho^*(x_{u^*}+x_{u_{2i}}+x_{w_i})\leq(\rho^*+2)x_{u^*}+2x_{u_{2i-1}}.$$
It follows that
$$x_{u_{2i-1}}\leq\frac{\rho^*+2}{{\rho^*}^2-2}x_{u^*}~~ {and} ~~x_{u_{2i}}=\frac1{\rho^*}(x_{u^*}+x_{u_{2i-1}})\leq \frac{\rho^*+1}{{\rho^*}^2-2}x_{u^*}.$$ Since $\rho^*\geq\sqrt{m}\geq\sqrt{13}$, we have $2\rho^*+3<{\rho^*}^2-2$.
Thus,
$x_{u_{2i-1}}+x_{u_{2i}}<x_{u^*}$, as desired.

If $\phi(u_{2i-1})+\phi(u_{2i})=0$, then $|N_W(u_{2i-1})\cup N_W(u_{2i})|=0$
(see Fig.\ref{fig4.1}).
Now $x_{u_{2i-1}}=x_{u_{2i}}$ and $\rho^* x_{u_{2i-1}}=x_{u^*}+x_{u_{2i}}$.
Hence, $x_{u_{2i-1}}=\frac1{\rho^*-1}x_{u^*}$ and $x_{u_{2i-1}}+x_{u_{2i}}=\frac2{\rho^*-1}x_{u^*}<x_{u^*}$.
\end{proof}

According to Claim \ref{cl4.1}, if $A_1\neq\emptyset$, then
$$\sum_{u\in N_1(u^*)}x_u=\sum_{i\in A_1\cup A_2}(x_{u_{2i-1}}+x_{u_{2i}})<(|A_1|+2|A_2|)x_{u^*}=(e(N_1(u^*))+|A_2|)x_{u^*}.$$
Thus by (\ref{eq4.1}), we have
$$(m-d(u^*))x_{u^*}<(e(N_1({u^*}))+|A_2|+e(N^2({u^*}),N({u^*})))x_{u^*}.$$
This implies that $e(W)<|A_2|$, which contradicts (\ref{eq4.4}).
Hence, $A_1=\emptyset$, that is, $N_1({u^*})=\cup_{i\in A_2}\{u_{2i-1},u_{2i}\}$.
By (\ref{eq4.1}), we have
\begin{eqnarray}\label{eq4.5}
(m-d(u^*))x_{u^*}\leq\sum_{i\in A_2}(x_{u_{2i-1}}+x_{u_{2i}})+\sum_{w\in N^2({u^*})}d_{N({u^*})}(w)x_w
\leq (2|A_2|+e(N^2({u^*}),N({u^*})))x_{u^*}.
\end{eqnarray}
Notice that $|A_2|=e(N(u^*))$. (\ref{eq4.5}) implies that $e(W)\leq|A_2|$.
Combining with (\ref{eq4.4}), we have $e(W)=|A_2|$.
Now, both (\ref{eq4.4}) and (\ref{eq4.5}) become equalities.
Therefore, $\phi(u_{2i-1})+\phi(u_{2i})=2$ for each $i\in A_2$,
and $x_u=x_{u^*}$ for any vertex $u\in N_1({u^*})\cup N^2({u^*})$.
Note that $x_u\leq\frac2{\rho^*}x_{u^*}<x_{u^*}$ for any vertex $u$ of degree two.
It follows that $d(u)\geq3$ for any vertex $u\in N_1({u^*})\cup N^2({u^*})$.
If $A_2\neq \emptyset$, then for any $i\in A_2$,
both $u_{2i-1}$ and $u_{2i}$ have neighbors in $W$ and each of their neighbors is of degree at least 3.
Hence $\phi(u_{2i-1})+\phi(u_{2i})\geq4$,
a contradiction. Therefore, $A_2=\emptyset$ and $e(W)=|A_2|=0$.
\end{proof}

\begin{lem} \label{le4.4}
If $9\leq m\leq12$, then $e(W)=0$.
\end{lem}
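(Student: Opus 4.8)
The plan is to run the proof of Lemma~\ref{le4.3} as far as it goes, isolate the unique place where $m\ge13$ is used, and dispatch the short remaining range by tracking the equality cases. Suppose, for a contradiction, that $e(W)\ge1$. By \eqref{eq4.3} we have $t:=e(N_1(u^*))\ge e(W)\ge1$, so the matching $E(G^*[N_1(u^*)])=\{u_{2i-1}u_{2i}:i\in[t]\}$ is nonempty; keep the partition $[t]=A_1\cup A_2$ and the bound $e(W)\ge|A_2|$ from \eqref{eq4.4}. Inspecting the proof of Lemma~\ref{le4.3}, the hypothesis $m\ge13$ enters only in Claim~\ref{cl4.1}, to guarantee $2\rho^*+3<{\rho^*}^2-2$, i.e.\ $\rho^*>1+\sqrt6$; every other step needs only $\rho^*\ge\sqrt m\ge3$. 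Hence, if $\rho^*>1+\sqrt6$, the proof of Lemma~\ref{le4.3} goes through verbatim and forces $e(W)=0$, a contradiction. Since $\rho^*\ge\sqrt{12}>1+\sqrt6$ when $m=12$, this already settles $m=12$, so from now on $9\le m\le11$ and $\rho^*\in[\sqrt m,1+\sqrt6]\subseteq[3,1+\sqrt6]$.

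The new input is that in this range the graph is tiny. Since $|N_1(u^*)|=2t$ and $m=d(u^*)+t+e(N(u^*),W)+e(W)$ with $e(N(u^*),W)\ge1$ ($G^*$ is connected by Lemma~\ref{le2.2}$(i)$) and $e(W)\ge1$, we get $2t\le d(u^*)\le m-t-2$, so $3t\le m-2\le9$ and $t\le3$. Now redo the local estimates of Claim~\ref{cl4.1} keeping $\rho^*$ symbolic: a matched pair with $\phi(u_{2i-1})+\phi(u_{2i})=0$ gives $x_{u_{2i-1}}+x_{u_{2i}}=\frac{2}{\rho^*-1}x_{u^*}\le x_{u^*}$, one with $\phi(u_{2i-1})+\phi(u_{2i})=1$ gives $x_{u_{2i-1}}+x_{u_{2i}}\le\frac{2\rho^*+3}{{\rho^*}^2-2}x_{u^*}\le\frac97 x_{u^*}$ (the fraction is decreasing on $[3,\infty)$), and $x_{u_{2i-1}}+x_{u_{2i}}\le2x_{u^*}$ always. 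Substituting these into \eqref{eq4.1}, together with $|N_1^2(u^*)|+\sum_{w\in N_0^2(u^*)}d_{N(u^*)}(w)=e(N^2(u^*),N(u^*))=e(N(u^*),W)$ (each $w\in N_1^2(u^*)$ has $d_{N(u^*)}(w)=1$, and every $N(u^*)$--$W$ edge lands in $N^2(u^*)$), yields $e(W)\le\frac27|A_1|+|A_2|$. As $|A_1|\le t\le3$ makes $\frac27|A_1|<1$, this forces $e(W)\le|A_2|$, hence $e(W)=|A_2|\ge1$; in particular $A_2\ne\emptyset$ and $|A_1|\le2$.

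If $A_1=\emptyset$, then the chain of inequalities just used is an equality throughout, so $x_w=x_{u^*}$ for every $w\in N^2(u^*)$, $x_{u_{2i-1}}=x_{u_{2i}}=x_{u^*}$ for every $i\in A_2$, and (by tightness of \eqref{eq4.4}) $\phi(u_{2i-1})+\phi(u_{2i})=2$ for every $i\in A_2$. But $x_{u_{2i-1}}=x_{u^*}$ forces $d(u_{2i-1})\ge\rho^*\ge3$, so $u_{2i-1}$ has a neighbour $w\in W\cap N_1^2(u^*)$; since such $w$ has exactly one neighbour in $N(u^*)$ and $x_w=x_{u^*}$ forces $d(w)\ge3$, we get $d_W(w)\ge2$, hence $\phi(u_{2i-1})\ge2$, likewise $\phi(u_{2i})\ge2$, so $\phi(u_{2i-1})+\phi(u_{2i})\ge4$ -- a contradiction. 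Therefore $A_1\ne\emptyset$, i.e.\ $1\le|A_1|\le2$ and $1\le|A_2|\le2$.

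It remains to treat the case $A_1\ne\emptyset$, where $t=|A_1|+|A_2|\le3$, $e(W)=|A_2|$, and the total slack in the above chain is only $\frac27|A_1|x_{u^*}\le\frac47 x_{u^*}$. Combining this with $d(u^*)\ge2t$, $e(N(u^*),W)\ge1$, and $3t\le m-2\le9$ pins the tuple $(m,d(u^*),|A_1|,|A_2|,e(N(u^*),W),e(W))$ down to a very short explicit list (essentially $|A_1|=|A_2|=1$ for $m\le10$, plus $(|A_1|,|A_2|)=(2,1)$ when $m=11$); each entry forces a near-rigid local structure around $u^*$, the two or three matched edges, and the unique $W$-edge, which either already contains $C_3^+$ or $C_4^+$, or contradicts $\rho^*\ge\sqrt m$ after writing out the eigenvalue equations at $u^*$, at the matched vertices, and along the pendant paths into $W$ (again using $d(w)\ge2$ for $w\in W$), or admits a spectral-radius-increasing modification in $\mathbb{G}(m,\{C_3^+,C_4^+\})$ via Lemma~\ref{le2.1}. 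I expect the main obstacle to be the borderline value $\rho^*=3$, which forces $m=9$ and makes all of the above estimates lose their slack simultaneously: there one must examine the handful of $\{C_3^+,C_4^+\}$-free graphs with nine edges and $e(W)\ge1$ by hand, dispatching each by the same degree-lifting mechanism as in the $A_1=\emptyset$ case -- a vertex of $N^2(u^*)$ with eigenvector entry equal to $x_{u^*}$ cannot have degree two, which is incompatible with its being the middle vertex of a length-two path from $N_1(u^*)$ into $W$.
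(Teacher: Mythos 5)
Your reduction is correct as far as it goes, and the opening observation is a nice one: the only place $m\ge13$ enters the proof of Lemma~\ref{le4.3} is the inequality $2\rho^*+3<{\rho^*}^2-2$ in Claim~\ref{cl4.1}, i.e.\ $\rho^*>1+\sqrt6$, and since $\sqrt{12}>1+\sqrt6$ the case $m=12$ is indeed already covered by that argument. The bookkeeping that follows ($3t\le m-2$, the identity $|N_1^2(u^*)|+\sum_{w\in N_0^2(u^*)}d_{N(u^*)}(w)=e(N(u^*),W)$, the bound $e(W)\le\frac27|A_1|+|A_2|$ and hence $e(W)=|A_2|$ by integrality, and the disposal of the subcase $A_1=\emptyset$ by the degree-lifting argument) all checks out.

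However, the proof is not complete: the entire case $A_1\ne\emptyset$ with $9\le m\le11$ is only a plan, not an argument. You assert that each tuple $(m,|A_1|,|A_2|,\dots)$ "either already contains $C_3^+$ or $C_4^+$, or contradicts $\rho^*\ge\sqrt m$\dots, or admits a spectral-radius-increasing modification," but you verify this trichotomy for no case, and you explicitly flag the tight case $m=9$, $\rho^*=3$ (where every one of your estimates degenerates to equality) as something that "one must examine \dots by hand" -- which is precisely the work the lemma requires. Since $e(W)=|A_2|\ge1$ in this regime, nothing yet contradicts the assumption $e(W)\ge1$, so the lemma is simply unproved for $m\in\{9,10,11\}$. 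For comparison, the paper takes a different and shorter route here that avoids the $A_1/A_2$ machinery entirely: it counts edges via $e(G^*)\ge 3t+e(W)+e(N^2(u^*),N(u^*))$ together with the minimum-degree facts from Lemma~\ref{le2.2} to force $e(W)\le1$ when $m\le12$, and then rules out $e(W)=1$ by locating at least $|N_1(u^*)|$ vertices of degree two and feeding $x_u\le\frac{2}{\rho^*}x_{u^*}$ into (\ref{eq4.1}), which yields $e(W)\le\frac{t}{3}$ and then $m>12$, a contradiction. You would either need to complete your finite case analysis in full (including the rigid $\rho^*=3$ configurations) or switch to an edge-counting argument of this kind.
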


\begin{figure}[t]
\centering \setlength{\unitlength}{2pt}
\begin{center}
\unitlength 1mm 
\linethickness{0.4pt}
\ifx\plotpoint\undefined\newsavebox{\plotpoint}\fi 
\begin{picture}(82.873,44.303)(0,0)
\multiput(41.57,39.553)(-.070035461,-.0336879433){564}{\line(-1,0){.070035461}}
\put(2.07,20.553){\line(1,0){28.5}}
\multiput(30.57,20.553)(.0337243402,.0549853372){341}{\line(0,1){.0549853372}}
\multiput(42.07,39.303)(.0336990596,-.0595611285){319}{\line(0,-1){.0595611285}}
\put(52.82,20.303){\line(1,0){28.5}}
\multiput(81.32,20.303)(-.0691489362,.0336879433){564}{\line(-1,0){.0691489362}}
\put(2.07,20.053){\line(0,-1){18.25}}
\put(2.07,1.803){\line(1,0){28.75}}
\multiput(30.82,1.803)(.0400728597,.0336976321){549}{\line(1,0){.0400728597}}
\put(81.57,20.303){\line(0,-1){18.25}}
\put(81.57,2.053){\line(-1,0){28.75}}
\multiput(52.82,2.053)(-.0400179856,.0337230216){556}{\line(-1,0){.0400179856}}
\put(41.57,38.803){\circle*{4}}
\put(1.82,20.303){\circle*{4}}
\put(1.82,1.803){\circle*{4}}
\put(80.82,20.303){\circle*{4}}
\put(81.07,2.303){\circle*{4}}
\put(52.82,2.303){\circle*{4}}
\put(52.32,20.553){\circle*{4}}
\put(30.57,20.803){\circle*{4}}
\put(30.57,2.053){\circle*{4}}
\put(41.32,44.303){\makebox(0,0)[cc]{$u^*$}}
\end{picture}
\end{center}
\caption{\footnotesize{The extremal graph $G^*$ with $e(G^*)=12$, $t=e(W)=2$ and $e(N^2(u^*),N(u^*))=|N_1^2(u^*)|=4$.}}\label{fig4.2}
\end{figure}
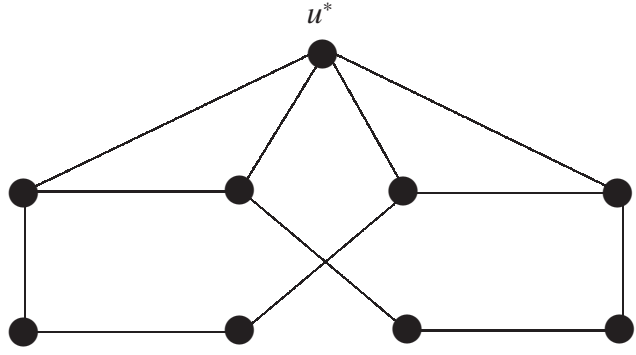

\begin{proof}
Let $t$ be the number of triangles in $G^*[N[u^*]]$.
It follows from (\ref{eq4.3}) that
$$e(W)\leq e(N_1({u^*}))=t,$$
and if $e(W)=e(N_1({u^*}))$, then $x_u=x_{u^*}$ for any vertex $u\in N_1({u^*})\cup N^2({u^*})$.
By Lemma \ref{le2.2}, $d(u)\geq 2$ for any $u\in N_1({u^*})\cup N^2({u^*})$.
Assume that $W\neq\emptyset$. First, we show that $e(W)\leq1$.

If $d(u_0)=2$ for some $u_0\in N_1({u^*})\cup N^2({{u^*}})$,
then $\rho^* x_{u_0}=\sum_{u\in N(u_0)}x_u\leq 2x_{u^*}$ and $x_{u_0}\leq\frac2{\rho^*}x_{u^*}<x_{u^*}$
(since $\rho^*\geq\sqrt{m}\geq3$).
Thus we have $e(W)<e(N_1({u^*}))$, that is, $e(N_1({u^*}))\geq e(W)+1$.
Therefore, $$e(G^*)\geq 3t+e(W)+e(N^2({u^*}),N({u^*}))\geq 4e(W)+3+|N^2({u^*})|.$$
Note that $e(G^*)\leq 12$. Thus $e(W)\leq 2$, and if $e(W)=2$ then $|N^2({u^*})|=1$.
Now we can find pendant vertices in $W$,
which contradicts Lemma \ref{le2.2}. Hence, $e(W)\leq1$.

Now assume that $d(u)\geq3$ for any $u\in N_1({u^*})\cup N^2({u^*})$.
However, $d_{N[{u^*}]}(u)=2$ for any $u\in N_1(u^*)$ and $d_{N_1({u^*})}(w)=1$ for any $w\in N_1^2({u^*}).$
Hence, $|N_1^2({u^*})|\geq |N_1({u^*})|=2t$. Furthermore,
\begin{eqnarray}\label{eq4.6}
e(G^*)\geq 3t+e(W)+e(N^2({u^*}),N({u^*}))\geq 4e(W)+|N_1^2({u^*})|\geq6e(W).
\end{eqnarray}
If $e(W)\geq2$, then all the inequalities in (\ref{eq4.6}) must be equalities.
This implies that $e(G^*)=12$, $t=e(W)=2$ and $e(N^2({u^*}),N({u^*}))=|N_1^2({u^*})|=4$
(see Fig.\ref{fig4.2}).
But now, all the vertices in $N_1^2({u^*})$ are of degree two, a contradiction.
Thus we also have $e(W)\leq1$.

Suppose that $e(W)=1$ and $w_1w_2$ is the unique edge of $G^*[W]$.
Recall that $d(w)\geq 2$ for any $w\in W$ and $d_{N(u^*)}(w)=1$ for any $w\in N_1^2(u^*)\subseteq W$.
Hence, $W\setminus\{w_1,w_2\}\subseteq N_0^2(u^*)$.
If $w_1,w_2\notin N_1^2(u^*)$,
then $d(u)=2$ for any $u\in N_1(u^*)$.
If $w_i\in N_1^2(u^*)$ for some $i\in \{1,2\}$,
then $d(w_i)=2$, and $d(u)\neq2$ only holds for its unique neighbor $u\in N_1(u^*)$.
This implies that there are at least $|N_1(u^*)|$ vertices of degree two in $N_1(u^*)\cup \{w_1,w_2\}$.
Note that $x_u\leq \frac2{\rho^*}x_{u^*}$ for any vertex $u$ of degree two.
Thus by (\ref{eq4.1}), we have
\begin{eqnarray}\label{eq4.7}
(m-d({u^*}))x_{u^*}\leq\left(2e(N_1(u^*))+e(N^2(u^*), N(u^*))-(1-\frac2{\rho^*})|N_1(u^*)|\right)x_{u^*}.
\end{eqnarray}
Notice that $|N_1(u^*)|=2e(N_1(u^*))$ and $\rho^*\geq3$.
(\ref{eq4.7}) implies that $e(W)\leq \frac13e(N_1(u^*))=\frac t3.$
It follows that $m>3t+e(W)\geq10e(W)=10$ and hence $\rho^*\geq\sqrt{m}>3$.
Again by (\ref{eq4.7}) we have $e(W)<\frac13e(N_1(u^*)).$
Thus, $t\geq 3e(W)+1$ and hence $m>3t+e(W)>12$, a contradiction.
Therefore, $e(W)=0$, as desired.
\end{proof}

In the following, we give the proof of Theorem \ref{th1.3}~(ii).

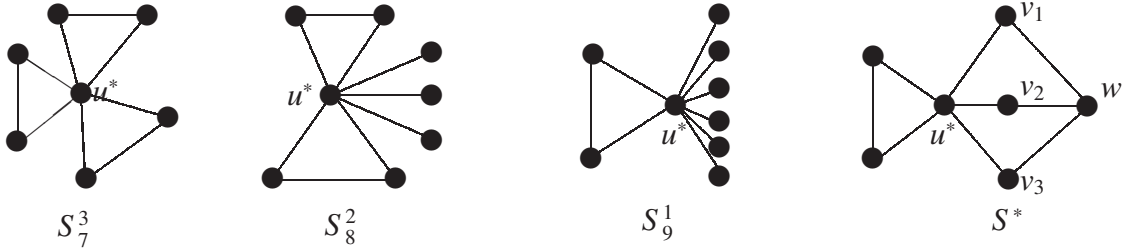
\begin{figure}[t]
\centering \setlength{\unitlength}{2.4pt}
\begin{center}
\unitlength 0.55mm 
\linethickness{0.4pt}
\ifx\plotpoint\undefined\newsavebox{\plotpoint}\fi 
\begin{picture}(263.664,53.414)(0,0)
\put(1.664,42.75){\line(0,-1){22.25}}
\put(1.664,20.5){\line(5,4){15}}
\put(16.664,32.5){\line(-3,2){15}}
\multiput(16.664,32.5)(-.033653846,.123397436){156}{\line(0,1){.123397436}}
\put(11.414,51.75){\line(1,0){21.5}}
\multiput(32.914,51.75)(-.0336842105,-.0405263158){475}{\line(0,-1){.0405263158}}
\multiput(16.914,32.5)(.0333333,-.6916667){30}{\line(0,-1){.6916667}}
\multiput(17.914,11.75)(.0460674157,.0337078652){445}{\line(1,0){.0460674157}}
\multiput(38.414,26.75)(-.115591398,.033602151){186}{\line(-1,0){.115591398}}
\put(1.664,42.25){\circle*{5}}
\put(1.414,21.25){\circle*{5}}
\put(18.164,12.25){\circle*{5}}
\put(37.914,27){\circle*{5}}
\put(11.414,52){\circle*{5}}
\put(32.914,51.75){\circle*{5}}
\put(16.914,32.75){\circle*{5}}
\put(69.414,51.75){\line(1,0){21.25}}
\put(90.664,51.75){\line(0,1){.25}}
\multiput(90.664,52)(-.0337150127,-.0502544529){393}{\line(0,-1){.0502544529}}
\multiput(77.414,32.25)(-.0336734694,.0775510204){245}{\line(0,1){.0775510204}}
\multiput(69.164,51.25)(-.03125,.09375){8}{\line(0,1){.09375}}
\multiput(77.414,32.25)(-.0337209302,-.0476744186){430}{\line(0,-1){.0476744186}}
\put(62.914,11.75){\line(1,0){30.25}}
\multiput(93.164,11.75)(-.0336956522,.0456521739){460}{\line(0,1){.0456521739}}
\put(77.664,32.75){\line(0,1){0}}
\put(77.664,32.75){\line(0,-1){.25}}
\multiput(77.664,32.5)(.0793269231,.0336538462){312}{\line(1,0){.0793269231}}
\put(102.414,43){\line(0,1){0}}
\put(77.664,32.25){\line(1,0){24.75}}
\multiput(77.914,32)(.078125,-.0337171053){304}{\line(1,0){.078125}}
\put(101.664,21.5){\circle*{5}}
\put(101.664,32.25){\circle*{5}}
\put(101.664,42.75){\circle*{5}}
\put(68.914,51.5){\circle*{5}}
\put(90.164,51.75){\circle*{5}}
\put(63.164,12.25){\circle*{5}}
\put(92.914,12.25){\circle*{5}}
\put(77.164,32.25){\circle*{5}}
\put(140.164,42.5){\line(0,-1){25.25}}
\multiput(140.164,17.25)(.0524611399,.0336787565){386}{\line(1,0){.0524611399}}
\multiput(160.414,30.25)(-.0575842697,.0337078652){356}{\line(-1,0){.0575842697}}
\multiput(160.164,29.75)(.0337243402,.0659824047){341}{\line(0,1){.0659824047}}
\multiput(160.664,30.25)(.0337078652,-.0512640449){356}{\line(0,-1){.0512640449}}
\multiput(160.914,30)(.0336391437,.0397553517){327}{\line(0,1){.0397553517}}
\multiput(160.914,30)(.09375,.033482143){112}{\line(1,0){.09375}}
\multiput(160.914,29.75)(.0352564103,-.0336538462){312}{\line(1,0){.0352564103}}
\multiput(160.664,30)(.076241135,-.033687943){141}{\line(1,0){.076241135}}
\put(140.664,42.25){\circle*{5}}
\put(140.164,17){\circle*{5}}
\put(160.664,30){\circle*{5}}
\put(171.164,52){\circle*{5}}
\put(171.164,43){\circle*{5}}
\put(171.164,34){\circle*{5}}
\put(171.164,26){\circle*{5}}
\put(171.164,19.75){\circle*{5}}
\put(171.164,12.75){\circle*{5}}
\put(208.414,41.75){\line(0,-1){25.25}}
\multiput(208.664,16.25)(.0438931298,.0337150127){393}{\line(1,0){.0438931298}}
\multiput(225.914,29.5)(-.0484550562,.0337078652){356}{\line(-1,0){.0484550562}}
\put(208.414,41.75){\circle*{5}}
\put(208.164,17){\circle*{5}}
\put(225.664,30){\circle*{5}}
\multiput(225.414,30)(.0336956522,.0472826087){460}{\line(0,1){.0472826087}}
\put(225.414,30){\line(1,0){15.75}}
\multiput(226.164,29.5)(.0337078652,-.0393258427){445}{\line(0,-1){.0393258427}}
\put(240.414,51.5){\circle*{5}}
\put(240.914,30){\circle*{5}}
\put(241.164,12){\circle*{5}}
\put(239.414,29.75){\line(1,0){21.5}}
\multiput(260.664,29.75)(-.0380539499,-.0337186898){519}{\line(-1,0){.0380539499}}
\multiput(260.914,29.5)(-.0336938436,.0361896839){601}{\line(0,1){.0361896839}}
\put(260.164,29.75){\circle*{5}}
\put(225.414,23){\makebox(0,0)[cc]{$u^*$}}
\put(247,52.5){\makebox(0,0)[cc]{$v_1$}}
\put(247,33.5){\makebox(0,0)[cc]{$v_2$}}
\put(247,10.5){\makebox(0,0)[cc]{$v_3$}}
\put(266,33.5){\makebox(0,0)[cc]{$w$}}
\put(160.414,23){\makebox(0,0)[cc]{$u^*$}}
\put(70,32.5){\makebox(0,0)[cc]{$u^*$}}
\put(23,33.5){\makebox(0,0)[cc]{$u^*$}}
\put(15.164,0){\makebox(0,0)[cc]{$S_7^3$}}
\put(79.664,.5){\makebox(0,0)[cc]{$S_8^2$}}
\put(156.414,1.5){\makebox(0,0)[cc]{$S_9^1$}}
\put(240.914,2.25){\makebox(0,0)[cc]{$S^*$}}
\end{picture}
\end{center}
\caption{\footnotesize{Possible extremal graphs for $m=9$.}}\label{fig4.3}
\end{figure}

\begin{proof}
From Lemmas \ref{le4.3} and \ref{le4.4},
we have $e(W)=0$.
If $N_1({u^*})=\emptyset$, then $G^*$ is triangle-free.
By Theorem \ref{th1.1},
$G^*$ is a complete bipartite graph, as desired.

Next, assume that $N_1(u^*)\neq\emptyset$. Then $e(N_1(u^*))\geq1$.
Note that $d(w)\geq2$ and $d_{N({u^*})}(w)=1$ for any $w\in N_1^2({u^*})$.
Then $d_W(w)\geq1$ for any $w\in N_1^2({u^*})$.
However $e(W)=0$, thus, $N_1^2({u^*})=\emptyset$ and $W=N_0^2({u^*}).$
Hence, $d(u_{2i-1})=d(u_{2i})=2$ for each edge $u_{2i-1}u_{2i}$ of $G^*[N_1({u^*})]$.
Similarly as above, we have
$x_{u_{2i-1}}=x_{u_{2i}}=\frac1{\rho^*-1}x_{u^*}$.
By (\ref{eq4.1}), we have
\begin{eqnarray}\label{eq4.8}
(m-d(u^*))x_{u^*}\leq\sum_{u\in N_1({u^*})}x_u+\sum_{w\in N_0^2({u^*})}d_{N({u^*})}(w)x_w
\leq\left(\frac{2e(N_1({u^*}))}{\rho^*-1}+e(N_0^2({u^*}),N({u^*}))\right)x_{u^*}.
\end{eqnarray}
It follows that $e(W)\leq0$, since $m\geq9$ and $\rho^*\geq\sqrt{m}\geq3$.
This implies that $e(W)=0$ and (\ref{eq4.8}) is an equality.
Thus, $m=9$ and $x_w=x_{u^*}$ for any vertex $w\in N_0^2(u^*)$.

Given any $w\in N_0^2(u^*)$,
$\rho^*x_{u^*}=\rho^*x_w=\sum_{u\in N(w)}x_u\leq d(w)x_{u^*}$.
It follows that $d(w)\geq\rho^*\geq3$.
Hence $|N_0({u^*})|\geq3$ and $e(G^*)\geq 3e(N_1({u^*}))+|N_0({u^*})|+3|N_0^2({u^*})|$.
Since $e(G^*)=9$, we have $|N_0^2({u^*})|\leq1$.
If $|N_0^2({u^*})|=1$, then $e(N_1({u^*}))=1$, $|N_0({u^*})|=3$ and
$G^*\cong S^*$ (see Fig.\ref{fig4.3}).
Note that $d(v_i)=2$ for each neighbor $v_i$ of $w$.
Then, $x_{v_i}\leq \frac2{\rho^*} x_{u^*}<x_{u^*}$
and hence $x_w=\frac1{\rho^*}(x_{v_1}+x_{v_2}+x_{v_3})<x_{u^*}$.
This implies that (\ref{eq4.8}) is a strict inequality,
that is, $e(W)<0$, a contradiction.
Therefore, $N_0^2({u^*})=\emptyset$, i.e., $W=\emptyset$.
Then $G^*$ is isomorphic to one of $S_7^3$, $S_8^2$ and $S_9^1$
(see Fig.\ref{fig4.3}).
This completes the proof.
\end{proof}

\section{Proof of Theorem \ref{th1.4}}\label{se4}

In this section, we characterize the extremal graph $G^*$ for $C_5$-free or $C_6$-free graphs.
The following first lemma was proposed as a conjecture by
Guo, Wang and Li \cite{GUO} and confirmed by Sun and Das \cite{SUN} recently.

\begin{lem}{\rm(\cite{SUN})}\label{le5.1}
Let $v\in V(G)$ with $d_G(v)\geq1$. Then $\rho(G)\leq\sqrt{\rho^2(G-v)+2d_G(v)-1}$.
The equality holds if and only if either $G\cong K_n$ or $G\cong K_{1,n-1}$ with $d_G(v)=1$.
\end{lem}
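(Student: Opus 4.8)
The plan is to work with the unit Perron vector of $G$ and to compare the two squared spectral radii $\rho^{2}:=\rho(G)^{2}=\lambda_{\max}(A^{2})$ and $\rho'^{2}:=\rho(G-v)^{2}=\lambda_{\max}(A'^{2})$, where $A=A(G)$ and $A'=A(G-v)$. First I would reduce to $G$ connected: otherwise replace $G$ by the component attaining $\rho$, which keeps $v$ non-isolated since $d_{G}(v)\ge1$. Write $d=d_{G}(v)$, let $X$ be the unit Perron vector with entry $p=x_{v}$ at $v$, let $Y$ be its restriction to $V(G-v)$, and let $b\in\{0,1\}^{V(G-v)}$ be the indicator of $N(v)$, so that $\|b\|^{2}=d$ and $b^{T}Y=\sum_{u\sim v}x_{u}=\rho p$.

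The engine of the proof is one identity. Reading the eigen-equation $\rho x_{u}=\sum_{w\sim u}x_{w}$ for every $u\neq v$ gives $A'Y=\rho Y-p\,b$, whence
\[ Y^{T}A'^{2}Y=\|A'Y\|^{2}=\rho^{2}\|Y\|^{2}-2\rho p\,(b^{T}Y)+p^{2}\|b\|^{2}=\rho^{2}(1-3p^{2})+dp^{2}. \]
Since $\lambda_{\max}(A'^{2})=\rho'^{2}$, the Rayleigh inequality yields $Y^{T}A'^{2}Y\le\rho'^{2}\|Y\|^{2}=\rho'^{2}(1-p^{2})$. Substituting the identity, dividing by $1-p^{2}>0$, and rearranging, one checks that the desired bound $\rho^{2}\le\rho'^{2}+2d-1$ is implied by the single scalar inequality
\[ p^{2}\,(2\rho^{2}+d-1)\ \le\ 2d-1. \]
Thus the entire theorem reduces to controlling the Perron entry at $v$.

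The crux, and the step I expect to be the main obstacle, is this scalar inequality. One half is clean: Cauchy--Schwarz at $v$ together with $\|X\|=1$ gives $\rho^{2}p^{2}=(\sum_{u\sim v}x_{u})^{2}\le d\sum_{u\sim v}x_{u}^{2}\le d(1-p^{2})$, i.e. $p^{2}\le d/(\rho^{2}+d)$, and an elementary comparison shows this already forces $p^{2}(2\rho^{2}+d-1)\le 2d-1$ whenever $\rho\le d$. The hard regime is $\rho>d$, where the global norm bound is too lossy and one must use the Perron mass lying at distance $\ge2$ from $v$. The mechanism is transparent in the pendant case $d=1$: if $u$ is the unique neighbour of $v$, then Cauchy--Schwarz at $u$ gives $x_{u}^{2}\le d_{G}(u)/(\rho^{2}+d_{G}(u))\le\tfrac12$ because $\rho^{2}\ge\Delta(G)\ge d_{G}(u)$, and hence $p^{2}=x_{u}^{2}/\rho^{2}\le 1/(2\rho^{2})$, which is exactly what is required. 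For general $d$ I would run the analogous per-neighbour estimate, feed it through $\rho^{2}p^{2}\le d\sum_{u\sim v}x_{u}^{2}$, and combine it with a lower bound on the distance-two mass $R=\sum_{\mathrm{dist}(v,w)\ge2}x_{w}^{2}$ that is forced once $\rho>d$; making this distance-two accounting quantitative is the technical heart of the argument.

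Finally, for the equality characterisation I would trace back the two inequalities actually used. Equality in the Rayleigh step forces $Y$ to be a Perron vector of $A'$, i.e. the restriction of $X$ must be proportional to the Perron vector of $G-v$; equality in the Cauchy--Schwarz and degree estimates forces the relevant entries of $X$ to be equal and the neighbours of $v$ to carry exactly the degrees dictated above. Combining these rigidities, either all entries of $X$ are equal and every two vertices are adjacent, giving $G\cong K_{n}$, or $d=1$ and the forced structure is a star with $v$ a leaf, giving $G\cong K_{1,n-1}$ with $d_{G}(v)=1$; these are precisely the two asserted equality cases.
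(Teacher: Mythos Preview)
The paper does not prove this lemma at all: it is quoted verbatim from Sun and Das \cite{SUN} and used as a black box in the proof of Theorem~\ref{th1.4}. So there is no ``paper's own proof'' to compare against; what you have written is an independent attempt at the Sun--Das result.

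Your reduction is correct and elegant: the identity $Y^{T}A'^{2}Y=\rho^{2}(1-3p^{2})+dp^{2}$ together with Rayleigh does reduce the whole statement to the scalar inequality $p^{2}(2\rho^{2}+d-1)\le 2d-1$, and your treatment of the regime $\rho\le d$ via $p^{2}\le d/(\rho^{2}+d)$ is complete. The $d=1$ case is also fine, since $\rho^{2}\ge\Delta(G)$ follows from $(A^{2})_{ii}=d_{G}(i)$ and the Rayleigh bound for the positive semidefinite matrix $A^{2}$.

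The genuine gap is the regime $\rho>d$ with $d\ge 2$. You describe this as ``the technical heart'' and propose to bound each $x_{u}^{2}$ for $u\in N(v)$ and to account for the Perron mass at distance $\ge 2$, but you give no concrete estimate, and it is not at all clear that the per-neighbour bound $x_{u}^{2}\le d_{G}(u)/(\rho^{2}+d_{G}(u))$ can be summed to yield what you need without further structural input. Note that equality in your scalar inequality occurs both at $K_{n}$ (where $\rho=d$) and at the star leaf (where $\rho>d=1$), so the inequality is tight on both sides of your case split; any argument for $d\ge 2$, $\rho>d$ must therefore be delicate. As written, this part is a hope rather than a proof.

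A minor slip: in your reduction to the connected case you assert that replacing $G$ by the component attaining $\rho$ ``keeps $v$ non-isolated since $d_{G}(v)\ge1$''. This is false if $v$ lies in a different component. The fix is easy---if $v$ is not in the maximal component then $\rho(G-v)=\rho(G)$ and the inequality is strict---but the sentence as written is wrong, and the equality analysis in the disconnected case needs a word (e.g.\ $K_{n}$ together with a disjoint component of spectral radius at most $n-2$ would otherwise also give equality).
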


\begin{lem}{\rm(\cite{PA})}\label{le5.2}
Let $G$ be a connected graph,
and $X=(x_{u_1},x_{u_2},\ldots,x_{u_n})^T$ be its eigenvector corresponding to $\rho=\rho(G)$ with $\|X\|_p=1$.
Then for $1\leq p<\infty$, the maximum coordinate $x_{u_1}\leq (\frac{\rho^{p-2}}{1+\rho^{p-2}})^{\frac1p}$.
\end{lem}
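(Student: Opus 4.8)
The plan is to reduce the claimed inequality to a lower bound on the $\ell_p$-mass of $X$ outside the extremal vertex, and then to close it with the power-mean inequality together with the elementary degree bound $\rho^2\ge\Delta$. Write $x_1:=x_{u_1}$ for the maximum coordinate and $d:=d_G(u_1)$, and note that since $G$ is connected (with an edge, so $\rho>0$) the Perron--Frobenius theorem lets us take $X>0$; the normalization reads $\sum_i x_{u_i}^p=1$. The target bound $x_1\le(\rho^{p-2}/(1+\rho^{p-2}))^{1/p}$ is equivalent, after raising to the $p$-th power and clearing the positive denominator, to $x_1^p(1+\rho^{p-2})\le\rho^{p-2}$, i.e. to
$$x_1^p\le\rho^{p-2}\bigl(1-x_1^p\bigr)=\rho^{p-2}\sum_{u_j\neq u_1}x_{u_j}^p,$$
the last equality using $\|X\|_p^p=1$. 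So it suffices to bound $\sum_{u_j\neq u_1}x_{u_j}^p$ from below, and this reduction is the main idea.

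First I would isolate the neighbours of $u_1$. The eigenequation at $u_1$ gives $\rho x_1=\sum_{u_j\sim u_1}x_{u_j}$, and since $u_1$ is not adjacent to itself, $N(u_1)\subseteq V(G)\setminus\{u_1\}$, whence $\sum_{u_j\neq u_1}x_{u_j}^p\ge\sum_{u_j\sim u_1}x_{u_j}^p$ as all terms are positive. Applying the power-mean inequality (convexity of $t\mapsto t^p$ for $p\ge1$) to the $d$ neighbour values,
$$\bigl(\rho x_1\bigr)^p=\Bigl(\sum_{u_j\sim u_1}x_{u_j}\Bigr)^p\le d^{\,p-1}\sum_{u_j\sim u_1}x_{u_j}^p,$$
so that $\sum_{u_j\sim u_1}x_{u_j}^p\ge\rho^p x_1^p/d^{\,p-1}$. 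Combining the two displays yields $\sum_{u_j\neq u_1}x_{u_j}^p\ge\rho^p x_1^p/d^{\,p-1}$.

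It remains to feed this into the reduced inequality. Multiplying by $\rho^{p-2}$ gives
$$\rho^{p-2}\sum_{u_j\neq u_1}x_{u_j}^p\ge\frac{\rho^{2p-2}}{d^{\,p-1}}\,x_1^p=\Bigl(\frac{\rho^2}{d}\Bigr)^{p-1}x_1^p,$$
so the argument is finished once $\rho^2\ge d$. This is where the one external fact enters: for $u_1$ one has $(A^2)_{u_1u_1}=d$, and by the Rayleigh quotient $\rho^2=\rho(A^2)\ge(A^2)_{u_1u_1}=d$ (indeed $\rho^2\ge\Delta\ge d$). Since $p-1\ge0$, we get $(\rho^2/d)^{p-1}\ge1$, which is exactly $x_1^p\le\rho^{p-2}\sum_{u_j\neq u_1}x_{u_j}^p$ and hence the lemma; the case $p=1$ is subsumed, the power-mean step being an equality there. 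I expect the only delicate point to be the bookkeeping of exponents, so that the factor $d^{\,p-1}$ produced by the power-mean step is cancelled precisely by $\rho^2\ge d$; everything else is a short chain of elementary inequalities.
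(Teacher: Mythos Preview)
Your argument is correct. The reduction to $x_1^p\le\rho^{p-2}\sum_{j\ne1}x_{u_j}^p$ is exact, the power-mean (H\"older) step produces the factor $d^{p-1}$ as you write, and the closing inequality $\rho^2\ge d_G(u_1)$ follows immediately from the Rayleigh quotient applied to $A^2$ at the standard basis vector $e_{u_1}$; since $p-1\ge0$ this kills the factor $(\rho^2/d)^{p-1}\ge1$ exactly as needed.

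Note, however, that the paper does not supply its own proof of this lemma: it is quoted verbatim from Papendieck and Recht \cite{PA} and used as a black box (only the special case $p=2$, giving $x_u\le1/\sqrt{2}$, is invoked later in the proof of Theorem~1.4). So there is no in-paper argument to compare against. Your self-contained proof is essentially the standard one; the only ingredients are the eigen-equation at the extremal vertex, convexity of $t\mapsto t^p$, and the elementary bound $\rho^2\ge\Delta$.
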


Now we come back to consider the desired extremal graph $S_{\frac{m+3}2,2}$
in Theorem \ref{th1.4}.

\begin{lem}\label{le5.3}
Let $\rho^\star=\rho(S_{\frac{m+3}2,2})$. Then
$\rho^\star$ is the largest root of $\rho^2-\rho-(m-1)=0$.
And hence, $\rho^\star=\frac{1+\sqrt{4m-3}}2.$
\end{lem}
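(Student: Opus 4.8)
The plan is to exploit the equitable partition of $S_{\frac{m+3}{2},2}$ that comes from its symmetry. Write $n=\frac{m+3}{2}$ and recall that $S_{n,2}$ consists of two adjacent vertices $a,b$ (the ``spine'') together with $n-2$ further vertices, each adjacent to both $a$ and $b$ and to nothing else; a direct count gives $e(S_{n,2})=1+2(n-2)=2n-3=m$, so the graph is well defined. First I would note that $\mathrm{Aut}(S_{n,2})$ acts transitively on $\{a,b\}$ and on the set of $n-2$ degree-two vertices. Since $S_{n,2}$ is connected, the Perron vector $X^\star$ is the unique positive unit eigenvector for $\rho^\star$, hence invariant under automorphisms; therefore $x_a=x_b=:\alpha$ and $x_v=:\beta$ for every degree-two vertex $v$, with $\alpha,\beta>0$.

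Next I would read off the two eigenvalue equations from $A(S_{n,2})X^\star=\rho^\star X^\star$: evaluating at $a$ gives $\rho^\star\alpha=\alpha+(n-2)\beta$, and evaluating at a degree-two vertex $v$ gives $\rho^\star\beta=2\alpha$. Equivalently, the partition $\{\{a,b\},\,V\setminus\{a,b\}\}$ is equitable with quotient matrix
\[
B=\begin{pmatrix} 1 & n-2\\[2pt] 2 & 0\end{pmatrix},
\]
and $(\alpha,\beta)^T$ is a positive eigenvector of $B$ with eigenvalue $\rho^\star$. Eliminating $\beta=2\alpha/\rho^\star$ from the first equation (legitimate since $\rho^\star>0$) yields $\rho^\star{}^2-\rho^\star-2(n-2)=0$. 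Using the elementary identity $2(n-2)=2\cdot\frac{m+3}{2}-4=m-1$, this is exactly $\rho^\star{}^2-\rho^\star-(m-1)=0$, i.e. $\rho^\star$ is a root of $\det(xI-B)=x(x-1)-2(n-2)=x^2-x-(m-1)$.

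Finally I would identify which root $\rho^\star$ is. The polynomial $x^2-x-(m-1)$ has one positive and one negative root; since $\rho^\star>0$, it is the positive one, namely $\rho^\star=\frac{1+\sqrt{1+4(m-1)}}{2}=\frac{1+\sqrt{4m-3}}{2}$. (If one prefers to avoid the automorphism argument, one can instead argue directly that $B$ is a nonnegative irreducible matrix, so its spectral radius is an eigenvalue of $A(S_{n,2})$ with a positive, cell-constant eigenvector, and then invoke uniqueness of the Perron vector to conclude $\rho^\star$ equals the spectral radius of $B$.)

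There is essentially no serious obstacle here: the computation is the standard quotient-matrix argument, and the only points needing a line of justification are (a) that the Perron vector is constant on the two orbits, which follows from uniqueness of the Perron vector together with vertex-transitivity of each orbit under $\mathrm{Aut}(S_{n,2})$, and (b) the arithmetic identity $2(n-2)=m-1$ that puts the characteristic polynomial in the stated form. The conclusion $\rho^\star=\frac{1+\sqrt{4m-3}}{2}$ then records the larger root of $x^2-x-(m-1)=0$ — which, incidentally, one checks exceeds $\sqrt m$ for $m\geq2$, consistent with the target bound in Theorem \ref{th1.4}.
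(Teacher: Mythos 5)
Your proposal is correct and follows essentially the same route as the paper: both exploit the symmetry of the Perron vector to reduce to the two eigenvalue equations $\rho^\star\alpha=\alpha+(n-2)\beta$ and $\rho^\star\beta=2\alpha$, eliminate, and take the positive root of $x^2-x-(m-1)=0$. Your write-up merely makes explicit the equitable-partition/automorphism justification that the paper compresses into ``by symmetry.''
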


\begin{proof}
Let $X$ be the Perron vector of $S_{\frac{m+3}2,2}$.
Let $u_1,u_2$ be the dominating vertices,
and $v_1,v_2,\ldots,v_{\frac{m-1}2}$ be the remaining vertices of degree two in $S_{\frac{m+3}2,2}$.
By symmetry, we have
$$\rho^\star x_{u_1}=x_{u_1}+\frac{m-1}2x_{v_1}, ~~\rho^\star x_{v_1}=2x_{u_1}.$$
Clearly, ${\rho^\star}^2-\rho^\star-(m-1)=0$.
\end{proof}

Let $R_k$ denote the graph obtained from $k$ copies of $K_4$ by sharing a vertex
and $H_{t,s}=\langle T,S\rangle$ be a bipartite graph with $|T|=t$ and $|S|=s$
(note that $H_{t,s}$ is not necessarily complete bipartite).
Let $H_{t,s}\circ R_{k}$ denote the graph obtained
by joining the dominating vertex $u$ of $R_k$ with the independent set $T$ of $H_{t,s}$,
for $k\geq1$ and $t, s\geq0$ (see Fig.\ref{fig5.1}).

\begin{figure}[t]
\centering \setlength{\unitlength}{3.0pt}
\begin{center}
\unitlength 4.2mm 
\linethickness{0.4pt}
\begin{picture}(18.252,12.042)(0,0)
\put(5.215,10.562){\circle*{.6}}
\put(5.171,8.618){\circle*{.6}}
\put(5.171,6.673){\circle*{.6}}
\put(5.215,2.873){\circle*{.6}}
\put(1.193,3.094){\circle*{.6}}
\put(1.149,9.899){\circle*{.6}}
\put(1.149,7.38){\circle*{.6}}
\multiput(5.215,10.651)(-.1730938,-.0331456){24}{\line(-1,0){.1730938}}
\multiput(5.259,10.562)(-.042813106,-.033605986){96}{\line(-1,0){.042813106}}
\multiput(5.259,10.607)(-.033689001,-.063031035){122}{\line(0,-1){.063031035}}
\multiput(5.127,8.574)(-.1016466,.03314563){40}{\line(-1,0){.1016466}}
\multiput(5.171,8.618)(-.10024532,-.03341511){41}{\line(-1,0){.10024532}}
\multiput(5.259,8.662)(-.033587572,-.047022601){125}{\line(0,-1){.047022601}}
\multiput(5.127,6.673)(-.041892394,.033605986){96}{\line(-1,0){.041892394}}
\multiput(10.209,7.557)(-.05597929,.03339115){90}{\line(-1,0){.05597929}}
\multiput(5.082,8.529)(.1815118,-.0331456){28}{\line(1,0){.1815118}}
\multiput(10.165,7.601)(-.1737288,-.0335266){29}{\line(-1,0){.1737288}}
\multiput(10.209,7.601)(-.034440977,-.033526615){145}{\line(-1,0){.034440977}}
\put(17.192,11.579){\line(0,-1){3.005}}
\multiput(17.192,8.574)(-.04999013,.03332675){61}{\line(-1,0){.04999013}}
\multiput(14.142,10.607)(.0997933,.0327892){31}{\line(1,0){.0997933}}
\multiput(14.186,10.518)(-.046203,-.03364784){88}{\line(-1,0){.046203}}
\put(10.12,7.557){\line(0,1){.177}}
\put(10.12,7.734){\line(0,-1){.2652}}
\multiput(10.209,7.557)(.057820711,.033513915){120}{\line(1,0){.057820711}}
\multiput(10.165,7.557)(.2209709,.0331456){32}{\line(1,0){.2209709}}
\put(17.192,1.591){\line(0,1){3.049}}
\multiput(17.192,4.64)(-.092406,-.0334804){33}{\line(-1,0){.092406}}
\multiput(14.142,3.536)(.05257583,-.03352661){58}{\line(1,0){.05257583}}
\multiput(17.192,1.591)(-.039256277,.033577696){179}{\line(-1,0){.039256277}}
\multiput(10.165,7.601)(.033513915,-.034987054){120}{\line(0,-1){.034987054}}
\multiput(10.076,7.646)(.076859433,-.033626002){92}{\line(1,0){.076859433}}
\put(5.7,11.379){\makebox(0,0)[cc]{$v_1$}}
\put(5.7,9.171){\makebox(0,0)[cc]{$v_2$}}
\put(5.7,7.359){\makebox(0,0)[cc]{$v_3$}}
\put(6.031,2.74){\makebox(0,0)[cc]{$v_t$}}
\put(0,9.96){\makebox(0,0)[cc]{$w_1$}}
\put(.044,7.292){\makebox(0,0)[cc]{$w_2$}}
\put(0,3.2){\makebox(0,0)[cc]{$w_s$}}
\put(10.209,7.646){\circle*{.6}}
\put(14.186,10.518){\circle*{.6}}
\put(17.147,11.623){\circle*{.6}}
\put(17.147,8.662){\circle*{.6}}
\put(17.103,4.596){\circle*{.6}}
\put(14.098,3.491){\circle*{.6}}
\put(17.192,1.635){\circle*{.6}}
\put(10.165,8.75){\makebox(0,0)[cc]{$u$}}
\put(14.01,11.5){\makebox(0,0)[cc]{$u_1$}}
\put(18.2,11.888){\makebox(0,0)[cc]{$u_2$}}
\put(18.254,8.795){\makebox(0,0)[cc]{$u_3$}}
\put(17.1,6.894){\makebox(0,0)[cc]{\Large$\vdots$}}
\put(1.149,5.215){\makebox(0,0)[cc]{$\vdots$}}
\put(5.259,4.861){\makebox(0,0)[cc]{$\vdots$}}
\end{picture}
\end{center}
\vspace{-0.3cm}
\caption{The graph $H_{t,s}\circ R_{k}$, where $k\geq1$ and $t, s\geq0$.}\label{fig5.1}
\end{figure}
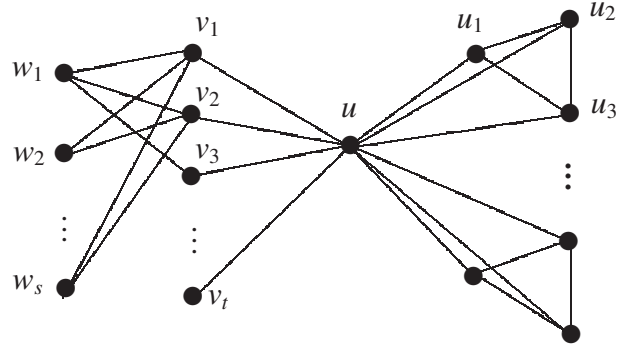

\begin{lem}\label{le5.4}
If $k\geq1$ and $m=6k+t\geq8$, then $\rho(H_{t,0}\circ R_k)<\frac{1+\sqrt{4m-3}}2$.
\end{lem}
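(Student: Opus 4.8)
The plan is to pin down $\rho(H_{t,0}\circ R_k)$ via an equitable partition and then compare it with $\rho^\star=\frac{1+\sqrt{4m-3}}{2}$ by evaluating the relevant characteristic polynomial at $\rho^\star$. Since $s=0$, the graph $H_{t,0}$ is just $t$ isolated vertices, so $H_{t,0}\circ R_k$ is obtained from $R_k$ by attaching $t$ pendant edges at the dominating vertex $u$. I would use the equitable partition of its vertex set into three classes: $\{u\}$, the set of the $3k$ non-central vertices of the $K_4$'s, and the set of the $t$ pendant vertices. The associated quotient matrix is $B=\begin{pmatrix}0&3k&t\\ 1&2&0\\ 1&0&0\end{pmatrix}$, and since $H_{t,0}\circ R_k$ is connected, $\rho:=\rho(H_{t,0}\circ R_k)$ is exactly the largest root of $g(x):=\det(xI-B)=x^3-2x^2-(3k+t)x+2t$.

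To compare $\rho$ with $\rho^\star$, I would invoke Lemma \ref{le5.3}, which gives $(\rho^\star)^2=\rho^\star+m-1$, and hence also $(\rho^\star)^3=m\rho^\star+(m-1)$. Substituting these into $g$ and using $m=6k+t$, the expression $g(\rho^\star)$ collapses to the linear form $g(\rho^\star)=(3k-2)\rho^\star+(t-6k+1)$. Because $k\geq1$ the coefficient $3k-2\geq1$ is positive, so it suffices to bound $\rho^\star$ below. I would split into two cases. If $k=1$, then $m\geq8$ forces $t\geq2$ and $g(\rho^\star)=\rho^\star+t-5$, which is increasing in $t$ and already positive at $t=2$ since $\rho^\star=\frac{1+\sqrt{29}}{2}>3$. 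If $k\geq2$, then $m\geq12$ gives $\rho^\star>\frac{7}{2}$, whence $g(\rho^\star)\geq(3k-2)\rho^\star-6k+1>\frac{7}{2}(3k-2)-6k+1=\frac{9k-12}{2}>0$.

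Finally, to pass from $g(\rho^\star)>0$ to $\rho<\rho^\star$, I would check that $g$ is increasing and convex at $\rho^\star$. Convexity is immediate: $g''(\rho^\star)=6\rho^\star-4>0$. For monotonicity, $g'(\rho^\star)=3(\rho^\star)^2-4\rho^\star-(3k+t)$ simplifies, via $(\rho^\star)^2=\rho^\star+m-1$ and $3k+t=\frac{m+t}{2}\leq m-3$ (using $t=m-6k\leq m-6$), to at least $2m-\rho^\star$, which is positive since $\rho^\star<\frac12+\sqrt m<2m$ for $m\geq8$. Then the Taylor expansion of the cubic $g$ about $\rho^\star$, namely $g(x)=g(\rho^\star)+g'(\rho^\star)(x-\rho^\star)+(3\rho^\star-2)(x-\rho^\star)^2+(x-\rho^\star)^3$, has a strictly positive constant term and only nonnegative remaining terms for $x\geq\rho^\star$, so $g(x)>0$ for all $x\geq\rho^\star$. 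Hence every real root of $g$, in particular $\rho=\rho(H_{t,0}\circ R_k)$, is strictly smaller than $\rho^\star=\frac{1+\sqrt{4m-3}}{2}$.

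Everything here is elementary polynomial bookkeeping. The one place that needs a little care is verifying $g(\rho^\star)>0$ in the smallest case $m=8$ (i.e. $k=1,t=2$), where the inequality is tight and one must keep the estimate $\sqrt{29}>5$ honest; and one must not forget the monotonicity/convexity check at $\rho^\star$, which is precisely what upgrades "$g(\rho^\star)>0$" to "$\rho^\star$ exceeds the largest root of $g$" rather than merely lying between two of its roots.
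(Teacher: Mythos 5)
Your proposal is correct and follows essentially the same route as the paper: the quotient matrix of your equitable partition yields exactly the paper's cubic $f(\rho)=\rho^3-2\rho^2-(3k+t)\rho+2t$ (the paper derives it from the symmetry of the Perron vector), and both arguments then show $f(\rho^\star)=(3k-2)\rho^\star-(6k-t-1)>0$ and $f'>0$ on $[\rho^\star,\infty)$ to conclude. Your case split on $k$ and the explicit convexity/Taylor check are just more detailed versions of the paper's one-line estimates.
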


\begin{proof}
Let $\rho=\rho(H_{t,0}\circ R_k)$, and $X$ be the Perron vector of $H_{t,0}\circ R_k$.
Let $u$ be the dominating vertex,
$u_1,u_2,\ldots,u_{3k}$ be the remaining vertices of $R_k$,
and $v_1,v_2,\ldots,v_t$ be the vertices in $H_{t,0}$.
By symmetry, we have
$$\rho x_{v_1}=x_u, ~~\rho x_u=3kx_{u_1}+tx_{v_1}, ~~\rho x_{u_1}=x_u+2x_{u_1}.$$
Solving this system, we have
\begin{eqnarray}\label{eq5.1}
f(\rho)=\rho^3-2\rho^2-(3k+t)\rho+2t=0.
\end{eqnarray}
Let $\rho^\star=\frac{1+\sqrt{4m-3}}2$. Then
\begin{eqnarray}\label{eq5.2}
{\rho^\star}^2=\rho^\star+(m-1)=\rho^\star+(6k+t-1).
\end{eqnarray}
Substituting (\ref{eq5.2}) to (\ref{eq5.1}),
we have $f(\rho^\star)=(3k-2)\rho^\star-(6k-t-1)$.
Since $\rho^\star>3$ for $m\geq8$, we have $f(\rho^\star)>3k+t-5\geq0$.
Moreover, (\ref{eq5.2}) implies that
$f'(\rho)=3\rho^2-4\rho-(3k+t)>0$ for $\rho\geq\rho^\star$.
Hence, $f(\rho)>0$ for $\rho\geq\rho^\star$.
Thus, $\rho(H_{t,0}\circ R_k)<\rho^\star$.
\end{proof}

\begin{rem}\label{re4.1}
If $m=7$, then $t=k=1$, $\rho^\star=3$ and $f(\rho^\star)=3k+t-5=-1$.
Hence, $\rho(H_{1,0}\circ R_1)>\rho^\star.$ Notice that $H_{1,0}\circ R_1$ is $C_5$-free.
The condition $m\geq 8$ is best possible for Theorem \ref{th1.4}~(i).
\end{rem}

We now characterize the extremal graph $G^*$ for $C_5$-free graphs.
Note that $G^*[N(u^*)]$ contains no paths of length $3$ for forbidding $C_5$.
Thus, we have the following result on the local structure of $G^*$.

\begin{lem} \label{le5.5}
Each connected component of $G^*[N(u^*)]$ is either a triangle or
a star $K_{1,r}$ for some $r\geq 0$, where $K_{1,0}$ is a singleton component.
\end{lem}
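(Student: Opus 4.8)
The plan is to leverage the observation already highlighted before the lemma: since $G^*$ is $C_5$-free, the graph $H:=G^*[N(u^*)]$ contains no path on four vertices, because a path $v_1v_2v_3v_4$ inside $N(u^*)$ together with the edges $u^*v_1$ and $u^*v_4$ would close a $C_5$ in $G^*$. It therefore suffices to prove the purely combinatorial statement that every connected graph containing no $P_4$ as a (not necessarily induced) subgraph is either $K_3$ or a star $K_{1,r}$ with $r\geq0$, and then apply it to each component of $G^*[N(u^*)]$.

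For the classification, let $H$ be such a graph. If $|V(H)|\leq 2$ then $H$ is $K_{1,0}$ or $K_{1,1}$, so assume $|V(H)|\geq 3$; then a longest path of $H$ has exactly three vertices, say $a,b,c$ with $ab,bc\in E(H)$, and in particular $d_H(b)\geq 2$. I would first show that $b$ dominates $H$. The ingredients are: (1) no vertex outside $\{a,b,c\}$ is adjacent to $a$ or to $c$, since a neighbour $d$ of $a$ outside $\{a,b,c\}$ gives the path $dabc$, and symmetrically for $c$; (2) no edge of $H$ joins two vertices of $V(H)\setminus\{a,b,c\}$, which follows from a short case check --- if such an edge $xy$ has an endpoint adjacent to $b$, say $x\sim b$, then $yxba$ or $yxbc$ is a $P_4$, and if neither endpoint is adjacent to $b$ one extracts a $P_4$ from a shortest path between $x$ and $b$. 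Given (1) and (2), any vertex $v\neq b$ has $b$ as its only possible neighbour apart from the pair $\{a,c\}$, and since $H$ is connected $v$ must indeed be joined to $b$.

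It remains to split according to whether $ac\in E(H)$. If $ac\notin E(H)$, then every edge of $H$ is incident with $b$, so $H=K_{1,r}$ with centre $b$ and $r=d_H(b)\geq 2$. If $ac\in E(H)$, then $\{a,b,c\}$ spans a triangle; and any further vertex $d$ would give the $P_4$ $dbac$ (using the edge $ac$ and $d\sim b$), a contradiction, so $V(H)=\{a,b,c\}$ and $H=K_3$. This exhausts the cases, and the lemma follows. The only point demanding genuine care is step (2) --- excluding edges disjoint from $b$ --- together with the accompanying remark that a triangle tolerates no pendant vertex; both reduce to exhibiting an explicit $P_4$, and everything else is immediate.
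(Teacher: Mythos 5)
Your proof is correct and follows the same route as the paper: the paper derives Lemma 5.5 directly from the observation that $G^*[N(u^*)]$ contains no path of length $3$ (else a $C_5$ through $u^*$ arises), leaving the folklore classification of connected $P_4$-subgraph-free graphs as triangles or stars implicit. You have merely written out that classification in full, and your case analysis is sound.
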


Notice that $S_{\frac{m+3}2,2}$ is $C_t$-free for $t\geq5$.
By Lemma \ref{le5.3}, we have
$\rho^*\geq \rho(S_{\frac{m+3}2,2})=\frac{1+\sqrt{4m-3}}2$.
Recall that $N_0(u^*)=\{u~|~u\in N(u^*), d_{N(u^*)}(u)=0\}$ and $W=V(G^*)\setminus N[{u^*}]$.
For convenience, let $W_0=N_W(N_0(u^*))$, i.e., $\cup_{u\in N_0(u^*)}N_W(u)$.
The following lemma gives a clearer local structure of the extremal graph $G^*$.

\begin{lem}\label{le5.6}
$e(W)=0$ and $W=W_0$.
\end{lem}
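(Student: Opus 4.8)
The plan is to run the same kind of weight-counting argument that drove the proofs of Lemmas \ref{le4.3} and \ref{le4.4}, but now adapted to the $C_5$-free structure described in Lemma \ref{le5.5}. First I would record the basic inequality coming from $\rho^\star=\rho(S_{(m+3)/2,2})$: since $\rho^*\geq\rho^\star=\frac{1+\sqrt{4m-3}}2$, we have ${\rho^*}^2-\rho^*-(m-1)\geq0$, and feeding the eigenvalue equation (\ref{eq2.2}) for $u^*$ into this gives a lower bound on a combination of $e(N_1(u^*))$, $e(N_2(u^*))$ (the triangles), and the edges from $N^2(u^*)$ back into $N(u^*)$, balanced against $e(W)$ plus the ``defect'' $\sum(x_{u^*}-x_w)$ over $w\in W$. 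The goal is to show this forces $e(W)=0$; the statement $W=W_0$ then follows because every $w\in W\setminus W_0$ would either be isolated in $G^*$ (contradicting no isolated vertices) or would have all its neighbors in $N_1(u^*)\cup N_2(u^*)\cup W$, and once $e(W)=0$ such a $w$ has all neighbors in $N_1(u^*)\cup N_2(u^*)$, which I will rule out using $C_5$-freeness (a vertex in $W$ adjacent to two vertices of a triangle-component or to the center plus a leaf of a star-component creates a $C_5$, and adjacency to vertices in two different components of $G^*[N(u^*)]$ together with $u^*$ also creates one).

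The core is the $e(W)=0$ step. By Lemma \ref{le5.5}, $G^*[N(u^*)]$ is a disjoint union of triangles and stars $K_{1,r}$ ($r\geq0$). For a vertex $u\in N(u^*)$ lying in a triangle, or being the center of a star, its degree inside $N[u^*]$ is already $2$ or more; for a leaf of a star or a singleton it is exactly $2$ or $1$. I would partition $N(u^*)$ and $W$ by how many walks of length $2$ go out into $W$, exactly as in the proof of Lemma \ref{le4.3}: vertices $u$ with $\phi(u):=\sum_{w\in N_W(u)}d_W(w)$ small contribute a strict deficiency $x_u<x_{u^*}$ (bounded by solving the local eigenvalue equations, using $\rho^*\geq\rho^\star>3$ for $m\geq8$, resp.\ $m\geq22$), whereas vertices with $\phi(u)$ large can be charged against $e(W)$ via the crude bound $e(W)\geq\frac12\sum\phi(u)$ over the ``heavy'' set, after discounting the multiplicity with which each $d_W(w)$ is counted (here $C_5$-freeness, or at least $C_4$-freeness inside the relevant pieces, controls how many neighbors in $N(u^*)$ a vertex of $W$ can have). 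Summing these contributions into (\ref{eq4.1})-type inequality and comparing with $e(W)\leq e(N_1(u^*))+e(N_2(u^*))$ (whatever the precise $C_5$-analogue of (\ref{eq4.3}) turns out to be) should yield $e(W)\leq 0$ unless every vertex of $N(u^*)\cup N^2(u^*)$ has weight exactly $x_{u^*}$ and degree $\geq3$; I would then close that equality case exactly as in Lemmas \ref{le4.3}-\ref{le4.4}, by observing that degree-$\geq3$ vertices everywhere force too many edges relative to $m$, or produce a pendant vertex in $W$ contradicting Lemma \ref{le2.2}(ii), or (via $C_5$-freeness) force a configuration that is simply not $C_5$-free.

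The main obstacle I anticipate is bookkeeping the contribution of the triangle-components and the star-centers of $G^*[N(u^*)]$: unlike the $\{C_3^+,C_4^+\}$-free case where $G^*[N(u^*)]$ was a matching, here a single component can contribute a lot of internal edges and the ``deficiency'' estimate $x_u<x_{u^*}$ is not automatic for high-degree vertices $u$. I expect to need a somewhat careful case split on whether $N_1(u^*)\neq\emptyset$ (stars with $r\geq1$ present) and on whether there are triangle-components, handling the triangle case by noting that a vertex of a triangle has $d_{N[u^*]}(u)=3$ already, so its deficiency must be wrung out of its $W$-neighborhood, and using $C_5$-freeness to bound how $W$ can attach to a triangle. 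The auxiliary Lemmas \ref{le5.1}-\ref{le5.4} and \ref{le5.2} are presumably there to supply the needed numerical slack (e.g.\ bounding $x_{u^*}$ from above, or comparing $\rho^*$ with $\rho(H_{t,0}\circ R_k)$) in exactly these tight cases, so I would invoke them to finish the equality analysis rather than re-deriving the bounds by hand. The $C_6$-free case ($m\geq22$) runs in parallel, with $G^*[N(u^*)]$ now allowed to contain slightly longer configurations (no $P_4$ was the $C_5$ condition; for $C_6$ we forbid $P_5$ in a suitable sense), so the component list in the analogue of Lemma \ref{le5.5} is a bit larger and the arithmetic needs the larger lower bound $m\geq22$; I would treat it by the same template, flagging only the places where the richer local structure changes the constants.
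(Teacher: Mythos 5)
Your plan misses the structural shortcut that makes this lemma short and, more importantly, your argument for $W=W_0$ has a concrete hole. For $e(W)=0$ the paper does not need the $\phi$-partition machinery of Lemmas \ref{le4.3}--\ref{le4.4} at all: feeding (\ref{eq2.1})--(\ref{eq2.2}) into ${\rho^*}^2-\rho^*\geq m-1$ gives $e(W)\leq e(N_+(u^*))-|N_+(u^*)|-\sum_{u\in N_0(u^*)}x_u/x_{u^*}+1$, and Lemma \ref{le5.5} (every component of $G^*[N(u^*)]$ is a triangle or a star) yields $e(N_+(u^*))\leq|N_+(u^*)|$ outright, hence $e(W)\leq1$ with no case analysis. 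The case $e(W)=1$ is then killed by a cut-vertex argument you never mention: both endpoints of the unique $W$-edge have degree at least $2$ by Lemma \ref{le2.2}, $C_5$-freeness forces them to share a single common neighbour $u\in N(u^*)$, and $u$ is then a cut vertex, contradicting Lemma \ref{le2.2}(ii). Your proposed closing move for the tight case (``degree-$\geq3$ vertices force too many edges relative to $m$'') is the Lemma \ref{le4.4} argument, which worked only because $m\leq12$ there; here $m$ is unbounded and that count proves nothing.

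The more serious gap is in $W=W_0$. You claim that a vertex $w\in W\setminus W_0$ with both neighbours in a non-trivial component of $G^*[N(u^*)]$ always creates a $C_5$. That is false for a $K_{1,1}$ component: if $u_1u_2$ is such a component and $N(w)=\{u_1,u_2\}$, the graph induced on $\{u^*,u_1,u_2,w\}$ is $K_4$ minus the edge $u^*w$, whose longest cycle is a $C_4$, so no pentagon arises no matter how many such $w$ there are. This configuration is genuinely $C_5$-free and cannot be excluded combinatorially; the paper eliminates it spectrally, by rewiring all of $W\setminus W_0$ from $u_1$ to $u^*$ and invoking Lemma \ref{le2.1} to get a graph with larger spectral radius. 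Without this step your proof does not close. (Also note that Lemma \ref{le5.6} concerns only the $C_5$-free extremal graph; the $C_6$-free analogue is a separate statement, Lemma \ref{le6.2}, with a different proof, so your parallel $m\geq22$ discussion is out of scope here.)
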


\begin{proof}
According to (\ref{eq2.1}) and (\ref{eq2.2}),
we have
\begin{eqnarray}\label{eq5.3}
({\rho^*}^2-\rho^*)x_{u^*}=
|N({u^*})|x_{u^*}+\sum_{u\in {N(u^*)\setminus N_0(u^*)}}(d_{N({u^*})}(u)-1)x_u-\sum_{u\in N_0(u^*)}x_u+\sum_{w\in W}d_{N({u^*})}(w)x_w.
\end{eqnarray}
Let $N_+(u^*)=N(u^*)\setminus N_0(u^*)$. Then
\begin{eqnarray}\label{eq5.4}
({\rho^*}^2-\rho^*)x_{u^*}\leq \left(|N({u^*})|+2e(N_+(u^*))-|N_+(u^*)|-\sum_{u\in N_0(u^*)}\frac {x_u}{x_{u^*}}+e(W,N({u^*}))\right)x_{u^*}
\end{eqnarray}
Note that $\rho^*\geq\frac{1+\sqrt{4m-3}}2$,
that is, ${\rho^*}^2-\rho^*\geq m-1$.
Combining with (\ref{eq5.4}), we have
\begin{eqnarray}\label{eq5.5}
e(W)\leq e(N_+(u^*))-|N_+(u^*)|-\sum_{u\in N_0(u^*)}\frac {x_u}{x_{u^*}}+1,
\end{eqnarray}
since $e(N(u^*))=e(N_+(u^*))$.
According to (\ref{eq5.3}) and (\ref{eq5.4}),
the equality in (\ref{eq5.5}) holds
if and only if ${\rho^*}^2-\rho^*=m-1$, $x_w=x_{u^*}$ for any $w\in W$ and $u\in N_+(u^*)$ with $d_{N(u^*)}(u)\geq2$.
By Lemma \ref{le5.5},
$e(N_+(u^*))\leq |N_+(u^*)|$ and hence $e(W)\leq1.$

Assume that $e(W)=1$. Let $w_iw_j$ be the unique edge in $G^*[W]$.
By Lemma \ref{le2.2}, $d(w)\geq2$ for each $w\in W$.
This implies that both $w_i$ and $w_j$ have neighbors in $N({u^*})$.
Note that $G^*$ is $C_5$-free. Thus, $w_i,w_j$ must have the same and the unique neighbor $u$ in $N({u^*})$.
Now $u$ is a cut vertex, which contradicts Lemma \ref{le2.2}.
Hence, $e(W)=0.$

Suppose that $W\setminus W_0\neq\emptyset$. For the reason of $C_5$-free,
we can see that $d(w)\leq2$ (and hence $d(w)=2$) for each $w\in W\setminus W_0$.
By Lemma \ref{le2.2},
all the vertices in $W\setminus W_0$ have the same neighborhood, say, $\{u_1,u_2\}$.
For avoiding pentagons,
$u_1$ and $u_2$ are in the same component $H$ of $G^*[N({u^*})]$ and $H$ can only be a copy of $K_{1,1}.$
Let $G=G^*-\{wu_1~|~w\in W\setminus W_0\}+\{wu^*~|~w\in W\setminus W_0\}$.
Clearly, $G$ is $C_5$-free.
Note that $x_{u^*}\geq x_{u_1}$ and $N_{G^*}(u^*)\varsubsetneqq N_{G}(u^*)$.
By Lemma \ref{le2.1}, $\rho(G)>\rho^*$, a contradiction.
Hence, $W=W_0$.
\end{proof}

\begin{lem}\label{le5.7}
Either $G^*\cong S_{\frac{m+3}{2}, 2}$ or $G^*\cong H_{t,s}\circ R_k$
for some $k\geq1$ and some bipartite graph $H_{t,s}$ of size $m-6k-t$.
\end{lem}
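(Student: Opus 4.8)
The plan is to combine the structural information accumulated in Lemmas \ref{le5.5}--\ref{le5.6} with a local eigenvector‐exchange argument to pin down $G^*$ exactly. By Lemma \ref{le5.6} we already know $e(W)=0$ and $W=W_0$, so every vertex of $W$ attaches only to vertices of $N_0(u^*)$, and by Lemma \ref{le2.2}(ii) every $w\in W$ has $d(w)\geq 2$. Since $G^*$ is $C_5$-free, no $w\in W$ can reach two distinct vertices of $N_0(u^*)$ that lie in a common path structure; a short case analysis of the walks $u^* u w u' u^* $ shows that in fact each $w\in W$ has all its neighbours inside a single ``slot'' attached to $u^*$, and more strongly that $G^*[N_0(u^*)\cup W\cup\{u^*\}]$ behaves like a bipartite graph $H_{t,s}$ hung on $u^*$. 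First I would make this precise: show that $N_0(u^*)$ together with $W$ forms (after adding $u^*$ as one side) a bipartite graph, i.e.\ there are no edges inside $N_0(u^*)$ (that is the defining property of $N_0$) and no edges inside $W$ (that is $e(W)=0$), so the part of $G^*$ outside $N_+(u^*)$ is exactly $u^*$ joined to an independent set $T$, with $W$ a further independent set attached to $T$ — which is the graph $H_{t,s}$ of the statement with $T=N_0(u^*)$.

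Next I would handle the components of $G^*[N_+(u^*)]$. By Lemma \ref{le5.5} each such component is a triangle or a star $K_{1,r}$. I would first rule out stars $K_{1,r}$ with $r\geq 1$ that are not single edges, and then decide between ``all components are triangles'' and ``exactly the right configuration of edges.'' The mechanism is Lemma \ref{le2.1}: if some component is a star $K_{1,r}$ with $r\geq 2$, or more generally if the local neighbourhoods are not optimally arranged, one can move edges/neighbours onto $u^*$ or onto the centre of a triangle to strictly increase $\rho$ while staying $C_5$-free, contradicting extremality. Concretely, each triangle component $u_{2i-1}u_{2i}u_{2i-1}$ (together with $u^*$) is a $K_4$; gluing all these $K_4$'s at $u^*$ gives $R_k$, and what hangs off $u^*$ besides these is precisely $H_{t,s}$. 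So the outcome is $G^*\cong H_{t,s}\circ R_k$ for some $k\geq 0$ and some bipartite $H_{t,s}$; counting edges gives $6k+t+ e(H_{t,s}\setminus\text{the }T\text{–}u^*\text{ edges}) = m$, i.e.\ $e(H_{t,s})=m-6k-t$ in the paper's bookkeeping.

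It remains to treat the degenerate case $k=0$, i.e.\ when $G^*[N_+(u^*)]$ has no triangle and hence (by Lemma \ref{le5.5} and the star‐elimination above) consists only of isolated edges and isolated vertices. In that case $G^*$ is triangle-free if $N_+(u^*)=\emptyset$, and Theorem \ref{th1.1} forces $G^*$ to be complete bipartite; among $C_5$-free complete bipartite graphs with $\rho^*\geq\frac{1+\sqrt{4m-3}}2$ one checks directly that the balanced-in-the-small-side graph $S_{\frac{m+3}{2},2}=K_{2,(m-1)/2}$ is the unique possibility (using $\rho(S_{\frac{m+3}{2},2})=\frac{1+\sqrt{4m-3}}2$ from Lemma \ref{le5.3}, and that $K_{1,m}$ has $\rho=\sqrt m<\frac{1+\sqrt{4m-3}}2$ while any other $K_{2,t}$ or larger $K_{a,b}$ either is not $C_5$-free-relevant or has strictly smaller spectral radius for the given size). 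If instead $N_+(u^*)$ is a nonempty union of isolated edges, an edge-exchange as in Lemma \ref{le2.1} (rotating a pendant edge of a triangle onto $u^*$, or absorbing an isolated edge into a $K_4$) either increases $\rho$ or lands us back in the $S_{\frac{m+3}{2},2}$ or $H_{t,s}\circ R_k$ families; so this case produces nothing new.

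The main obstacle I anticipate is the second paragraph: showing that the $C_5$-free condition together with extremality forces the non-triangle components of $G^*[N_+(u^*)]$ to collapse to isolated edges and forces the triangles to be ``saturated'' into $K_4$'s sharing $u^*$, without double counting and without overlooking a stray $C_5$ created by the neighbourhood of a vertex in $W$. This requires a careful enumeration of potential $5$-cycles passing through $u^*$, through a triangle component, and through $W$ simultaneously, and a matching potential-function estimate (in the spirit of (\ref{eq5.3})--(\ref{eq5.5})) to show that any deviation from the $H_{t,s}\circ R_k$ shape strictly loses spectral radius. Once the shape $H_{t,s}\circ R_k$ (with the convention $R_0$ meaning ``no $K_4$'s,'' handled separately as above) is established, the edge count $m=6k+t+e(H_{t,s})$ is immediate and the lemma follows.
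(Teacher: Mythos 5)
There is a genuine gap, and in fact a structural error in your plan. You identify $S_{\frac{m+3}{2},2}$ with the complete bipartite graph $K_{2,(m-1)/2}$ and propose to extract it from the triangle-free case via Theorem \ref{th1.1}. But $S_{\frac{m+3}{2},2}$ is the complete split graph $K_2\vee \overline{K_{(m-1)/2}}$ (a book): it contains $\frac{m-1}{2}$ triangles and has $m$ edges, whereas $K_{2,(m-1)/2}$ is triangle-free with $m-1$ edges. In the paper, the triangle-free case is simply a contradiction ($\rho^*\leq\sqrt m<\frac{1+\sqrt{4m-3}}2$), and $S_{\frac{m+3}{2},2}$ arises instead from the case where $G^*[N(u^*)]$ is a \emph{single star component} $K_{1,r}$ with $r\geq2$ and $W=N_0(u^*)=\emptyset$, so that $G^*=u^*\vee K_{1,r}$. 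Your second paragraph proposes to ``rule out stars $K_{1,r}$ with $r\geq2$'' by edge-rotation; carried out as stated, this would eliminate the actual extremal graph of the theorem.

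The second, related gap is that you never invoke the quantitative inequality that drives the paper's proof. From $e(W)=0$ and \eqref{eq5.5} one gets $e(N_+(u^*))\geq|N_+(u^*)|+\sum_{u\in N_0(u^*)}x_u/x_{u^*}-1$; since by Lemma \ref{le5.5} each component is a triangle (contributing $0$ to $e-|V|$) or a star (contributing $-1$), this bounds the number $c$ of star components by $1$, and forces $c=0$ whenever $N_0(u^*)\neq\emptyset$. That is exactly the dichotomy of the lemma: $c=0$ gives $H_{t,s}\circ R_k$ (with $k\geq1$ supplied by Lemma \ref{le5.4} and the triangle-free contradiction), while $c=1$ forces $N_0(u^*)=W=\emptyset$ and, after excluding the mixed configuration $K_{1,r}\bullet R_k$ with $k\geq1$ by a Perron-entry computation showing \eqref{eq5.5} is then strict, yields $S_{\frac{m+3}{2},2}$. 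Your proposal replaces this counting step with unspecified local exchanges and does not address the mixed star-plus-triangles case at all, so the dichotomy is not actually established.
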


\begin{proof}
By Lemma \ref{le5.6} and (\ref{eq5.5}),
we have
\begin{eqnarray}\label{eq5.6}
e(N_+(u^*))\geq|N_+(u^*)|+\sum_{u\in N_0(u^*)}\frac {x_u}{x_{u^*}}-1.
\end{eqnarray}

Firstly, assume that $N_0(u^*)\neq\emptyset$.
Then $\sum_{u\in N_0(u^*)}\frac {x_u}{x_u^*}>0$ and hence
$e(N_+(u^*))\geq|N_+(u^*)|$.
It follows from Lemma \ref{le5.5} that
$G^*[N_+(u^*)]$ consists of disjoint union of $k$ triangles.
If $k=0$, then $G^*$ is triangle-free.
By Theorem \ref{th1.1},
$\rho^*\leq\sqrt{m}<\frac{1+\sqrt{4m-3}}2$, a contradiction.
Thus, $k\geq1$. By Lemma \ref{le5.6},
$G^*\cong H_{|N_0(u^*)|, |W_0|}\circ R_k$ for some bipartite graph $H_{|N_0(u^*)|,|W_0|}$.

Secondly, assume that $N_0(u^*)=\emptyset$.
Then $W_0=\emptyset$,
and by Lemma \ref{le5.6}, $W=\emptyset$.
Thus, (\ref{eq5.5}) becomes
$e(N_+(u^*))-|N_+(u^*)|\geq-1$.
Let $c$ be the number of star-components of $G^*[N_+(u^*)]$.
Then $e(N_+(u^*))-|N_+(u^*)|=-c$. It follows that $c\leq1$.
If $c=0$, then $G^*\cong R_k$ for $k=\frac m6$.
By Lemma \ref{le5.4},
$\rho^*<\frac{1+\sqrt{4m-3}}2$, a contradiction.
Thus, $c=1$. Let $k$ be the number of triangle-components of $G^*[N_+(u^*)]$.
If $k\geq1$, then $G^*$ is isomorphic to $K_{1,r}\bullet R_{k}$ for some $r\geq1$
(see Fig.\ref{fig5.2}).
By symmetry, $\rho^* x_{u_1}=2x_{u_1}+x_{u^*}$.
Since $m\geq 8$ and $\rho^*\geq\frac{1+\sqrt{4m-3}}2>3$,
we have $x_{u_1}=\frac{x_{u^*}}{\rho^*-2}<{x_{u^*}}$.
This implies that (\ref{eq5.5}) is strict.
Correspondingly, $$e(W)<e(N_+(u^*))-|N_+(u^*)|+1=-c+1=0,$$ a contradiction.
Hence, $k=0$. Now $G^*[N(u^*)]\cong K_{1,r}$ and $G^*$ is obtained by joining $u^*$ with each vertex of $K_{1,r}$.
Thus, $G^*\cong S_{\frac{m+3}{2}, 2}$.
This completes the proof.
\end{proof}

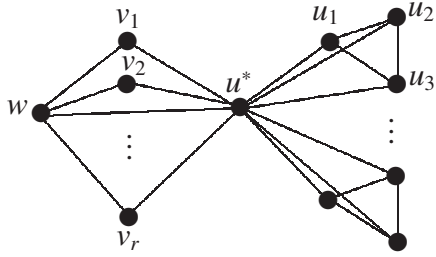
\begin{figure}[t]
\centering \setlength{\unitlength}{3.0pt}
\begin{center}
\unitlength 3mm 
\linethickness{0.4pt}
\ifx\plotpoint\undefined\newsavebox{\plotpoint}\fi 
\begin{picture}(80,11.07)(7,0)
\put(28.815,9.591){\circle*{.815}}
\multiput(28.859,9.591)(-.0428125,-.033604167){96}{\line(-1,0){.0428125}}
\multiput(33.809,6.585)(-.05597778,.0334){90}{\line(-1,0){.05597778}}
\put(40.792,10.607){\line(0,-1){3.005}}
\multiput(40.792,7.602)(-.05,.03332787){61}{\line(-1,0){.05}}
\multiput(37.742,9.635)(.0998065,.0327742){31}{\line(1,0){.0998065}}
\multiput(37.786,9.546)(-.04620455,-.03364773){88}{\line(-1,0){.04620455}}
\put(33.72,6.585){\line(0,1){.177}}
\put(33.72,6.763){\line(0,-1){.265}}
\multiput(33.809,6.585)(.057816667,.033516667){120}{\line(1,0){.057816667}}
\multiput(33.765,6.585)(.2209688,.0331875){32}{\line(1,0){.2209688}}
\put(40.792,.619){\line(0,1){3.049}}
\multiput(40.792,3.669)(-.0924242,-.0334545){33}{\line(-1,0){.0924242}}
\multiput(37.742,2.565)(.05258621,-.03355172){58}{\line(1,0){.05258621}}
\multiput(40.792,.619)(-.039256983,.033581006){179}{\line(-1,0){.039256983}}
\multiput(33.765,6.63)(.033508333,-.034983333){120}{\line(0,-1){.034983333}}
\multiput(33.676,6.674)(.076858696,-.033630435){92}{\line(1,0){.076858696}}
\put(28.859,10.607){\makebox(0,0)[cc]{$v_1$}}
\put(33.809,6.674){\circle*{.815}}
\put(37.786,9.546){\circle*{.815}}
\put(40.747,10.651){\circle*{.815}}
\put(40.747,7.69){\circle*{.815}}
\put(40.703,3.624){\circle*{.815}}
\put(37.698,2.52){\circle*{.815}}
\put(40.792,.664){\circle*{.815}}
\put(33.765,7.78){\makebox(0,0)[cc]{$u^*$}}
\put(37.61,10.741){\makebox(0,0)[cc]{$u_1$}}
\put(41.852,10.916){\makebox(0,0)[cc]{$u_2$}}
\put(41.894,7.824){\makebox(0,0)[cc]{$u_3$}}
\put(40.52,5.923){\makebox(0,0)[cc]{$\vdots$}}
\put(28.859,0.75){\makebox(0,0)[cc]{$v_r$}}
\put(28.783,7.703){\circle*{.815}}
\put(29.1,8.5){\makebox(0,0)[cc]{$v_2$}}
\put(28.871,1.781){\circle*{.815}}
\multiput(33.809,6.718)(-.1582438,.0327893){31}{\line(-1,0){.1582438}}
\put(28.903,7.734){\line(-1,0){.1326}}
\multiput(33.853,6.63)(-.035050578,-.03352664){145}{\line(-1,0){.035050578}}
\multiput(24.705,6.276)(.835672,.032141){11}{\line(1,0){.835672}}
\put(28.9,5.243){\makebox(0,0)[cc]{$\vdots$}}
\put(23.937,6.63){\makebox(0,0)[cc]{$w$}}
\multiput(24.881,6.364)(.09470187,.03367178){42}{\line(1,0){.09470187}}
\multiput(24.881,6.364)(.03351394,-.037933361){120}{\line(0,-1){.037933361}}
\put(24.982,6.377){\circle*{.815}}
\end{picture}
\end{center}
\vspace{-0.3cm}
\caption{$K_{1,r}\bullet R_{k}$ for $r\geq1$ and $k\geq1$.}\label{fig5.2}
\end{figure}

In the following, we give the proof of Theorem \ref{th1.4}~(i).

\begin{proof}
Suppose that $G^*\cong H_{t,s}\circ R_k$ for some $k\geq1$ and some bipartite graph $H_{t,s}=<T,S>$
(see Fig.\ref{fig5.1}).
By the symmetry, $\rho^*x_{u_i}=2x_{u_i}+x_u$ for each vertex $u_i$ in any $K_4$-block of $G^*$.
Recall that $\rho^*>3$. Thus, $x_{u_i}=\frac{x_u}{\rho^*-2}<x_u$.
If $x_v\geq x_u$ for some vertex $v\neq u$, then $v\in T\cup S$.
Thus we can select a block $B\cong K_4$ and
define $G=G^*+\{vj~|~j\in N_B(u)\}-\{uj~|~j\in N_B(u)\}$.
Clearly, $G$ is $C_5$-free.
And by Lemma \ref{le2.1}, we have $\rho(G)>\rho^*$,
a contradiction. It follows that $x_v<x_u$ for any vertex $v\neq u$.

Notice that $\rho^*\geq \frac{1+\sqrt{4m-3}}2$.
Lemma \ref{le5.4} implies that $s\geq1$.
If $s=1$, say $S=\{w_1\}$ and $v_1\in N(w_1)$, then we define $G'=G^*+uw_1-v_1w_1$.
Clearly, $G'$ is $C_5$-free.
Since $x_{v_1}<x_u$, we have $\rho(G')>\rho^*$, a contradiction.
Thus, $s\geq2$.
By Lemma \ref{le2.2}, $d(w_i)\geq 2$ for any $w_i\in S$.
Hence, $t\geq2$ and $e(T,S)\geq 2s$.

Without loss of generality, we may assume that $x_{v_1}$, $x_{v_2}$ attain the largest
two coordinates among all vertices in $T$ and $w_1$ attains the maximum degree among all vertices in $S$
(see Fig.\ref{fig5.1}).
Since $G^*$ is an extremal graph, all the vertices in $S$ are adjacent to $v_1$ and $v_2$.
This implies that $N(v_1)=N(v_2)$ and hence $x_{v_1}=x_{v_2}.$
If $e(T,S)=2s$, then $d(w_1)=2$ and $\rho^*x_{w_1}=2x_{v_1}$.
If $e(T,S)=2s+1$, then $d(w_1)=3$ and $\rho^*x_{w_1}=x_{v_1}+x_{v_2}+x_{v_3}\leq3x_{v_1}$,
where $v_3\in N(w_1)\setminus \{v_1,v_2\}$.
In both cases, $x_{w_1}<x_{v_1}$, since $\rho^*>3$.
Define $G''=G^*+v_1v_2-w_1v_2$ if $e(T,S)=2s$ or $G''=G^*+v_1v_2-w_1v_3$ if $e(T,S)=2s+1$.
One can see that $G''$ is $C_5$-free and $\rho(G'')>\rho^*,$
a contradiction. Thus, $e(T,S)\geq2s+2$.
It follows that $t\geq3$ and $m=6k+t+e(T,S)\geq6k+2s+5.$
Let $\{u,u_1,u_2,u_3\}$ be a 4-clique of $G^*$.
Next, we use induction on $m$ to show $\rho^*<\frac{1+\sqrt{4m-3}}2$.

Firstly, assume that $k=1.$ Recall that $s\geq2$. Thus, $m\geq15$.
By Lemma \ref{le5.1}, we have
$${\rho^*}^2\leq \rho^2(G^*-u_1)+5\leq\rho^2(G^*-u_1-u_2)+8\leq\rho^2(G^*-u_1-u_2-u_3)+9.$$
Note that $G^*-u_1-u_2-u_3$ is a bipartite graph of size $m-6$.
By Theorem \ref{th1.1}, we have
$\rho^2(G^*-u_1-u_2-u_3)\leq m-6$.
Hence, ${\rho^*}^2\leq m+3<(\frac{1+\sqrt{4m-3}}2)^2$, since $m\geq15$.

Secondly, assume that $k\geq 2.$ Then $m\geq21$.
Note that $x_{u_1}=x_{u_2}=x_{u_3}$ (see Fig.\ref{fig5.1}).
Thus $\rho^* x_{u_1}=2x_{u_1}+x_u$.
By Lemma \ref{le5.2},
$x_u\leq\frac1{\sqrt2}$ and thus
\begin{eqnarray}\label{eq5.7}
x_{u_1}^2=(\frac {x_u}{\rho^*-2})^2\leq\frac1{2(\rho^*-2)^2}.
\end{eqnarray}
Let $G'''=G^*-\{u_iu_j~|~1\leq i<j\leq3\}$. We have
$$\rho(G''')\geq {X^*}^T(A(G'''))X^*={X^*}^T(A(G^*))X^*-6x_{u_i}x_{u_j}=\rho^*-6x_{u_1}^2$$
and hence
\begin{eqnarray*}
\rho(G''')(\rho(G''')-1)\geq(\rho^*-6x_{u_1}^2)(\rho^*-6x_{u_1}^2-1)>{\rho^*}^2-\rho^*-6(2\rho^*-1)x_{u_1}^2.
\end{eqnarray*}
Combing with (\ref{eq5.7}), we have
\begin{eqnarray}\label{eq5.8}
\rho(G''')(\rho(G''')-1)>{\rho^*}^2-\rho^*-\frac{3(2\rho^*-1)}{(\rho^*-2)^2}.
\end{eqnarray}
Note that $e(G''')=m-3$ and $G'''$ belongs to the class of $H_{t+3, s}\circ R_{k-1}.$
On one hand, by induction hypothesis, $\rho(G''')<\frac{1+\sqrt{4m-15}}2$, that is, $\rho(G''')(\rho(G''')-1)<m-4.$
On the other hand, recall that $\rho^*\geq\frac{1+\sqrt{4m-3}}2$ and $m\geq21$.
We have $\rho^*\geq5$ and hence $\frac{3(2\rho^*-1)}{(\rho^*-2)^2}\leq3$.
Thus by (\ref{eq5.8}),
${\rho^*}^2-\rho^*<m-1,$ that is, $\rho^*<\frac{1+\sqrt{4m-3}}2$.

In both cases, we get a contradiction: $\rho^*<\frac{1+\sqrt{4m-3}}2$.
Thus, $G^*\ncong H_{t,s}\circ R_k$.
According to Lemma \ref{le5.7},
$G^*\cong S_{\frac{m+3}{2}, 2}$. This completes the proof.
\end{proof}

In what follows, we consider $C_6$-free graphs.
Recall that $W=V(G^*)\setminus N[{u^*}]$. Let $W_H=N_W(V(H))$ for any component $H$ of $G^*[N(u^*)]$.
Since $G^*$ is $C_6$-free, $U_{H_i}\cap U_{H_j}=\emptyset$ for any two component $H_i$ and $H_j$ of $G^*[N(u^*)]$,
unless one is an isolated vertex and the other is a star.
Note that $\rho^*\geq \rho(S_{\frac{m+3}2,2})=\frac{1+\sqrt{4m-3}}2,$ that is, ${\rho^*}^2-\rho^*\geq m-1.$
And $\sum_{w\in W}d_{N({u^*})}(w)x_w\leq e(W,N(u^*))x_{u^*}$.
Combining with (\ref{eq5.3}), we have
$$\left(m-1-|N(u^*)|-e(W,N(u^*))+\sum_{u\in N_0(u^*)}\frac{x_u}{x_{u^*}}\right)x_{u^*}\leq \sum_{u\in {N(u^*)\setminus N_0(u^*)}}\left(d_{N({u^*})}(u)-1\right)x_u.$$
Define $\gamma(H)=\sum_{u\in V(H)}(d_{H}(u)-1)x_u$ for any non-trivial component $H$ of $G^*[N(u^*)]$.
Then
\begin{eqnarray}\label{eq6.1}
\left(e(N(u^*))+e(W)+\sum_{u\in N_0(u^*)}\frac{x_u}{x_{u^*}}-1\right)x_{u^*}\leq \sum_{H}\gamma(H),
\end{eqnarray}
where $H$ takes over all non-trivial components of $G^*[N(u^*)]$.

\begin{lem} \label{le6.1}
$G^*[N(u^*)]$ contains no any cycle of length four or more.
\end{lem}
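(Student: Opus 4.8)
The plan is to argue by contradiction: suppose $G^*[N(u^*)]$ contains a cycle $C$ of length $\ell\geq 4$. Since $G^*$ is $C_t$-free only for a single $t$ (here $C_5$ or $C_6$), the existence of such a long cycle inside the neighbourhood of $u^*$ should contradict either the $C_t$-freeness of $G^*$ directly, or contradict the spectral lower bound $\rho^*\geq\frac{1+\sqrt{4m-3}}{2}$, i.e. ${\rho^*}^2-\rho^*\geq m-1$, via inequality (\ref{eq6.1}). The first observation is that if $v_1v_2\cdots v_\ell v_1$ is a cycle in $G^*[N(u^*)]$, then together with $u^*$ we obtain cycles $u^*v_1v_2\cdots v_j v_1$-type structures; more precisely the path $v_1 v_2\cdots v_j$ inside $N(u^*)$ plus the two edges $u^*v_1$ and $u^*v_j$ forms a cycle of length $j+1$ for every $2\leq j\leq \ell$. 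So if $\ell\geq 4$ we already produce cycles of lengths $3,4,\dots,\ell+1$ through $u^*$; for the $C_5$-free case ($\ell+1\geq 5$) this is an immediate contradiction, and similarly for $C_6$-free once $\ell\geq 5$. Hence the only case that survives is $\ell=4$ in the $C_6$-free setting (a $C_4$ inside $N(u^*)$ gives cycles of length $3,4,5$ through $u^*$ but not a $6$-cycle automatically).

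So the crux is: rule out a $4$-cycle $v_1v_2v_3v_4v_1$ in $G^*[N(u^*)]$ when $G^*$ is $C_6$-free and $m\geq 22$. Here I would use the structure of $G^*[N(u^*)]$ already constrained by $C_6$-freeness (the paragraph before the lemma notes that distinct non-trivial components have disjoint $W$-neighbourhoods except for the star/isolated-vertex exception). If the component $H$ containing this $C_4$ had any additional edge or vertex, one can typically extend to a $6$-cycle: e.g. a chord of the $C_4$ gives a triangle plus pendant structure, and a fifth vertex adjacent to the cycle, together with $u^*$, yields a $6$-cycle $u^* v_i v_{i+1} v_{i+2} v_{i+3} x u^*$ or similar. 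So $H$ must be exactly $C_4$. Then I would estimate $\gamma(H)=\sum_{u\in V(H)}(d_H(u)-1)x_u = \sum_{i=1}^4 x_{v_i}$ against the bound on $\rho^*$: the four vertices $v_1,\dots,v_4$ each have $x_{v_i}\leq x_{u^*}$, and one can get a strict improvement because these vertices have bounded total degree (each $v_i$ has degree $2$ inside $N[u^*]$, and its $W$-neighbours are limited by $C_6$-freeness). Plugging into (\ref{eq6.1}) and comparing with the competitor $S_{\frac{m+3}{2},2}$ should force ${\rho^*}^2-\rho^* < m-1$, contradicting the extremality.

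Concretely, the key steps in order are: (1) show a cycle $C_\ell$ in $G^*[N(u^*)]$ with $\ell\geq 5$ is immediately impossible by building a $C_5$ or $C_6$ through $u^*$; (2) reduce to $\ell=4$, $C_6$-free, and show the component $H\supseteq C_4$ must equal $C_4$ by checking that any chord or any external neighbour of $C$ (other than $u^*$) completes a $6$-cycle; (3) bound the Perron entries of $v_1,\dots,v_4$ and their $W$-neighbours, using Lemma \ref{le2.2} and the $C_6$-free structure, to control $\gamma(H)$; (4) feed this into (\ref{eq6.1}) to derive ${\rho^*}^2-\rho^* < m-1$ when $m\geq 22$, contradicting $\rho^*\geq\rho(S_{\frac{m+3}{2},2})$.

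The main obstacle I anticipate is step (3)--(4): getting a \emph{quantitatively} good enough bound on $\gamma(C_4)$ and on the contribution of $W$-vertices attached to the $C_4$, so that the final inequality is strict. The rough count $\gamma(C_4)=\sum x_{v_i}\leq 4x_{u^*}$ is almost certainly too weak on its own; I expect to need the eigenvalue-equations for the $v_i$ (each $\rho^* x_{v_i} = x_{u^*} + x_{v_{i-1}} + x_{v_{i+1}} + (\text{$W$-terms})$) to solve for $x_{v_i}$ in terms of $x_{u^*}$ and show $\sum x_{v_i}$ is comfortably below the threshold forced by $m\geq 22$, perhaps after also bounding how many $W$-vertices can hang off the $C_4$ without creating a $C_6$. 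The bookkeeping tying $e(W)$, $e(N(u^*))$, and the per-vertex Perron bounds together in (\ref{eq6.1}) is where the delicate part lies.
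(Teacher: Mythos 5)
Your outline follows the paper's proof: use $C_6$-freeness to forbid paths on five vertices (hence cycles of length at least five) in $G^*[N(u^*)]$, then rule out a remaining $4$-cycle by bounding $\gamma(H)$ and feeding it into (\ref{eq6.1}). The gap is in your step (2). A chord of the $4$-cycle does \emph{not} complete a hexagon: $C_4+e$ or $K_4$ on $\{v_1,\dots,v_4\}$ together with $u^*$ spans only five vertices, so no $C_6$ arises; and a vertex $w\in W$ attached to the $4$-cycle closes no hexagon either, since your cycle $u^*v_iv_{i+1}v_{i+2}v_{i+3}xu^*$ needs the edge $xu^*$, i.e. $x\in N(u^*)$. (What $C_6$-freeness does give is that the component containing the $C_4$ has exactly four vertices.) Hence that component may be $C_4$, $C_4+e$ or $K_4$, possibly with vertices of $W$ attached, and your reduction to $H\cong C_4$ with $\gamma(H)=\sum_i x_{v_i}$ discards precisely the hard cases.

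Quantitatively, for $H\cong K_4$ one has $\gamma(H)=2\sum_i x_{v_i}$, and when $W_H\neq\emptyset$ the trivial bound $8x_{u^*}$ does not beat the target $\bigl(e(H)+e(W_H,W)-1\bigr)x_{u^*}=\bigl(5+e(W_H,W)\bigr)x_{u^*}$ unless $e(W_H,W)\geq 4$, so some structural input is unavoidable. The paper supplies it by splitting on whether the vertices of $W_H$ have mutually distinct attachment points in $V(H)$ --- in which case Lemma \ref{le2.2} forces all but one of them to have degree at least $3$, giving $e(W_H,W)\geq 2k-1$, followed by an equality analysis --- or a common attachment point $u_1$, in which case the other three cycle vertices have no $W$-neighbours and satisfy $x_{u_i}\leq\frac{2}{\rho^*-2}x_{u^*}<\frac{2}{3}x_{u^*}$, which suffices. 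Your steps (3)--(4) would go through essentially as written for $H\cong C_4$ (and for $W_H=\emptyset$, where $x_{v_i}<\frac12 x_{u^*}$ already closes all three subgraph cases), but to complete the proof you must add this case analysis for $C_4+e$ and $K_4$ with nonempty $W_H$.
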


\begin{proof}
Since $G^*$ is $C_6$-free,
$G^*[N(u^*)]$ does not contain any path of length 4 and any cycle of length more than 4.
Moreover, if $G^*[N(u^*)]$ is of circumference 4,
then components containing $C_4$ are isomorphic to $C_4,$ $C_4+e$ or $K_4$;
if $G^*[N(u^*)]$ is of circumference 3,
then components containing $C_3$ are isomorphic to $K_{1,r}+e$ for $r\geq2$.
Let $\mathcal{H}$ be the family of components of $G^*[N(u^*)]$ which contain $C_4$ and
$\mathcal{H'}$ be the family of other components of $G^*[N(u^*)]$.
Then $\mathcal{H'}$ consists of trees or unicyclic components. Hence,
\begin{eqnarray}\label{eq6.2}
\gamma(H)\leq(2e(H)-|H|)x_{u^*}\leq e(H)x_{u^*},
\end{eqnarray}
for any $H\in \mathcal{H'}$.
Moreover, note that $G^*$ is $C_6$-free.
Thus, $W_{H_1}\cap W_{H_2}=\emptyset$ and $e(W_{H_1},W_{H_2})=0$ for any $H_1,H_2\in\mathcal{H}$ with $H_1\neq H_2.$
We can see that
\begin{eqnarray}\label{eq6.3}
e(W)\geq\sum_{H\in\mathcal{H}}e(W_H,W).
\end{eqnarray}
Combining (\ref{eq6.2}), (\ref{eq6.3}) with
(\ref{eq6.1}) and abandoning the item $\sum_{u\in N_0(u^*)}\frac{x_u}{x_{u^*}}$,
we have
$$\left(\sum_{H\in \mathcal{H}}(e(H)+e(W_H,W))-1\right)x_{u^*}\leq \sum_{H\in \mathcal{H}}\gamma(H).$$
Suppose that $\mathcal{H}\neq\emptyset$.
In order to get a contradiction,
it suffices to show $$\gamma(H)<(e(H)+e(W_H,W)-1)x_{u^*}$$ for each $H\in\mathcal{H}$.
Let $H^*\in\mathcal{H}$ with $V(H^*)=\{u_1,u_2,u_3,u_4\}$.
First, assume that $W_{H^*}=\emptyset$.
Let $x_{u_{i^*}}=\max\{x_{u_i}~|~1\leq i\leq4\}$.
Then $\rho^* x_{u_{i^*}}=\sum_{u\in N(u_{i^*})}x_u\leq x_{u^*}+3x_{u_{i^*}}$.
Note that $\rho^*\geq\frac{1+\sqrt{4m-3}}2>5$, since $m>21$.
Hence $x_{u_{i^*}}\leq\frac{x_{u^*}}{\rho^*-3}<\frac{1}2 x_{u^*}$ and
$\gamma(H^*)\leq(2e(H^*)-|H^*|)x_{u_{i^*}}<(e(H^*)-2)x_{u^*},$
as desired.
Next assume that $W_{H^*}\neq\emptyset$.
We consider two cases.

Firstly, there are $k\geq2$ vertices of $W_{H^*}$ with mutual distinct neighbor in $V(H^*)$.
Since $G^*$ is $C_6$-free, $d_{N(u^*)}(w)=1$ for any $w\in W_{H^*}$, and $N_{H^*}(w)=N_{H^*}(w')$
for each pair of adjacent vertices $w,w'\in W_{H^*}$.
Let $w_1,w_2,\ldots,w_k\in W_{H^*}$ with mutual distinct neighbor in $V(H^*)$.
Then $\{w_1,w_2,\ldots,w_k\}$ is an independent set.
By Lemma \ref{le2.2},
$d(w_i)\geq2$ for $i\in\{1,2,\ldots,k\}$ and $d(w_i)=2$ for at most one $w_i$.
Thus, $\sum_{1\leq i\leq k}d(w_i)\geq 3k-1$.
This implies that $e(W_{H^*},W)\geq2k-1$ and hence
\begin{eqnarray}\label{eq6.4}
e(H^*)\leq e(K_4)\leq e(W_{H^*},W)-2k+7\leq e(W_{H^*},W)+3.
\end{eqnarray}
Note that $|H^*|=4$.
By (\ref{eq6.4}), we have
\begin{eqnarray}\label{eq6.5}
\gamma(H^*)=\sum_{u_i\in V(H^*)}(d_{H^*}(u_i)-1)x_{u_i}\leq(2e(H^*)-|H^*|)x_{u^*}\leq (e(H^*)+e(W_{H^*},W)-1)x_{u^*}.
\end{eqnarray}
If (\ref{eq6.5}) holds in equality,
then (\ref{eq6.4}) also holds in equality.
This implies that $k=2$, $H^*\cong K_4$ and $x_{u_i}=x_{u^*}$ for each $u_i\in V(H^*)$.
Since $k=2$, we can find a vertex $u_i\in V(H^*)$ with $N_W(u_i)=\emptyset$.
Thus, $\rho^* x_{u_i}=\sum_{u\in N(u_i)}x_{u}\leq4x_{u^*}$.
That is, $x_{u_i}\leq\frac{4}{\rho^*}x_{u^*}<x_{u^*}.$
Thus, (\ref{eq6.5}) is strict, as desired.

Secondly, all vertices in $W_{H^*}$ have a common neighbor, say $u_1$, in $V(H^*)$.
Now $N_W(u_i)=\emptyset$ for $i\in\{2,3,4\}$. Let $x_{u_2}=\max\{x_{u_i}~|~i=2,3,4\}$.
Thus, $\rho^* x_{u_2}\leq(x_{u^*}+x_{u_1})+(x_{u_3}+x_{u_4})\leq2(x_{u^*}+x_{u_2})$,\
i.e., $x_{u_2}\leq\frac2{\rho^*-2}x_{u^*}<\frac{2}3x_{u^*}$.
Note that $d_{H^*}(u_1)\leq3$. By the definition of $\gamma(H^*)$, we have
\begin{eqnarray*}
\gamma(H^*) &\leq& (d_{H^*}(u_1)-1)x_{u_1}+(2e(H^*)-d_{H^*}(u_1)-3)x_{u_2}\\
&<& \left((d_{H^*}(u_1)-1)+(\frac43e(H^*)-\frac23 d_{H^*}(u_1)-2)\right)x_{u^*}\\
&\leq& (\frac43e(H^*)-2)x_{u^*}.
\end{eqnarray*}
It follows that $\gamma(H^*)<e(H^*)x_{u^*}$, since $\frac13 e(H^*)\leq2$.
Moreover, note that $d(w)\geq2$ for any $w\in W_{H^*}$.
Thus, $e(W_{H^*},W)\geq1$ and hence $\gamma(H^*)<(e(H^*)+e(W_{H^*},W)-1)x_{u^*}$.
\end{proof}

Lemma \ref{le6.1} implies that
each component of $G^*[N(u^*)]$ is a tree or a unicyclic graph $K_{1,r}+e$.
Let $c$ be the number of non-trivial tree-components of $G^*[N(u^*)]$.
Then
$$\sum_{H}\gamma(H)\leq \sum_{H}(2e(H)-|H|)x_{u^*}=(e(N(u^*))-c)x_{u^*},$$
where $H$ takes over all non-trivial components of $G^*[N(u^*)]$.
Thus, (\ref{eq6.1}) becomes
$$\left(e(W)+\sum_{u\in N_0(u^*)}\frac{x_u}{x_{u^*}}-1\right)x_{u^*}\leq -cx_{u^*}.$$
Equivalently,
\begin{eqnarray}\label{eq6.6}
e(W)\leq 1-c-\sum_{u\in N_0(u^*)}\frac{x_u}{x_{u^*}}.
\end{eqnarray}

\begin{lem} \label{le6.2}
$e(W)=0$ and $G^*[N(u^*)]$ contains no triangle.
\end{lem}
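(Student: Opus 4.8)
The plan is to sharpen inequality~(\ref{eq6.1}) by improving the bound on $\gamma(H)$ for the unicyclic components $H\cong K_{1,r}+e$ that Lemma~\ref{le6.1} still allows, so as to exclude such components entirely; the equality $e(W)=0$ then drops out of~(\ref{eq6.6}). Two preliminary remarks are worth isolating. Since the right-hand side of~(\ref{eq6.6}) is nonnegative, the number $c$ of non-trivial tree-components of $G^*[N(u^*)]$ is at most $1$, and if $c=1$ then $N_0(u^*)=\emptyset$ and $e(W)=0$. Moreover $G^*[N(u^*)]$ cannot be edgeless: otherwise $N_0(u^*)=N(u^*)$, so $\sum_{u\in N_0(u^*)}x_u/x_{u^*}=\rho^*$ by~(\ref{eq2.1}), and~(\ref{eq6.1}) would give $e(W)\leq 1-\rho^*<0$ since $\rho^*\geq\frac{1+\sqrt{4m-3}}2>5$.

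Now suppose, toward a contradiction, that $H\cong K_{1,r}+e$ is a component of $G^*[N(u^*)]$; write $\{u_1,u_2,u_3\}$ for its triangle with $u_1$ the centre of the underlying star, and $p_1,\dots,p_{r-2}$ for the pendant leaves. As $u_1,u_2,u_3\in N(u^*)$, the vertices $u^*,u_1,u_2,u_3$ span a $K_4$. The key claim is that none of $u_1,u_2,u_3$ has a neighbour in $W$; in particular $d_{N(u^*)}(u_2)=d_{N(u^*)}(u_3)=2$ and $u_2,u_3$ receive no weight from $W$. To prove it, assume $u_2w\in E(G^*)$ with $w\in W$. If $w$ has a neighbour $z\in N(u^*)\setminus\{u_1,u_2,u_3\}$, then $G^*$ contains the $6$-cycle $u^*u_1u_3u_2wzu^*$; if $w\sim u_1$ and $r\geq3$, then $G^*$ contains the $6$-cycle $u^*p_1u_1wu_2u_3u^*$; and in the remaining situations the connection of $w$ to the rest of $G^*$ --- which exists by Lemma~\ref{le2.2}(i) and cannot be blocked at the non-cut-vertices $u_1,u_2,u_3$ --- can, after one further step into $N(u^*)$, be spliced with a length-$3$ arc of the $K_4$ to form a $6$-cycle, except in the residual low-degree cases (essentially $r=2$ with $w$ of degree two adjacent to two of $u_1,u_2,u_3$), where the switch $G^*-wu_i+wu^*$ keeps the graph in $\mathbb{G}(m,C_6)$ and strictly increases the spectral radius by Lemma~\ref{le2.1}; in every case the choice of $G^*$ is contradicted. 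I expect this case analysis --- verifying that every way $w$ reaches the graph yields either a $6$-cycle or a legitimate switch --- to be the main obstacle.

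Granting the claim, $\rho^*x_{u_2}=x_{u^*}+x_{u_1}+x_{u_3}$ and, by the symmetry of $u_2$ and $u_3$ in $H$, $x_{u_2}=x_{u_3}$, so $(\rho^*-1)x_{u_2}=x_{u^*}+x_{u_1}\leq 2x_{u^*}$ and hence $x_{u_2}=x_{u_3}<\frac{1}{2}x_{u^*}$ because $\rho^*>5$. Therefore
$$\gamma(H)=(r-1)x_{u_1}+x_{u_2}+x_{u_3}<(r-1)x_{u^*}+x_{u^*}=\bigl(e(H)-1\bigr)x_{u^*},$$
so a triangle component contributes to the right side of~(\ref{eq6.1}) exactly as a tree-component does, with a strict inequality. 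Inserting this bound for the triangle components, together with $\gamma(H)\leq(2e(H)-|H|)x_{u^*}=(e(H)-1)x_{u^*}$ for the non-trivial tree-components, into~(\ref{eq6.1}) gives
$$\Bigl(e(W)+\sum_{u\in N_0(u^*)}\frac{x_u}{x_{u^*}}+c'-1\Bigr)x_{u^*}<0,$$
where $c'$ denotes the number of all non-trivial components of $G^*[N(u^*)]$. But a triangle component is present, so $c'\geq1$ and the bracket is nonnegative --- a contradiction. Hence $G^*[N(u^*)]$ is triangle-free, all its non-trivial components are trees, and the preliminary remarks then force $c=1$, $N_0(u^*)=\emptyset$ and $e(W)=0$, as required.
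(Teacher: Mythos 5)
Your global strategy is attractive: if every unicyclic component $H\cong K_{1,r}+e$ of $G^*[N(u^*)]$ satisfied the strict bound $\gamma(H)<(e(H)-1)x_{u^*}$, then feeding this into (\ref{eq6.1}) together with $\gamma(H)\leq(e(H)-1)x_{u^*}$ for tree-components does yield the contradiction you state, and your closing deduction of $e(W)=0$ from $c=1$ via (\ref{eq6.6}) is fine. The problem is that everything hinges on the key claim that $u_2$ and $u_3$ have no neighbours in $W$, and your treatment of that claim is incomplete precisely where it is hardest. The hexagons you exhibit handle a $W$-neighbour of $u_2$ that reaches another component of $G^*[N(u^*)]$ or a pendant leaf of $H$, but for the residual case ($H\cong K_3$, $w$ attached only to the triangle) your proposed fix --- the single-edge switch $G^*-wu_i+wu^*$ --- does not in general preserve $C_6$-freeness. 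Concretely, if two vertices $w,w'\in W$ are each adjacent to exactly $u_2$ and $u_3$ (a configuration that creates no hexagon in $G^*$, since any cycle through $w$ and $w'$ must traverse $u_2wu_3$ and $u_2w'u_3$ and hence closes into a $4$-cycle, and which is compatible with Lemma~\ref{le2.2}(iii) because $w,w'$ have a common neighbourhood), then after deleting $wu_2$ and adding $wu^*$ the graph contains the hexagon $u^*wu_3w'u_2u_1u^*$; deleting $wu_3$ instead gives $u^*wu_2w'u_3u_1u^*$. So maximality of $G^*$ is not contradicted, the claim is not established, and you yourself flag this case analysis as the unresolved obstacle.

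This is exactly the point where the paper takes a different and more careful route: it first proves $e(W)=0$ separately (if $e(W)=1$, the unique $W$-edge forces a pendant triangle and a cut vertex, against Lemma~\ref{le2.2}), and then, in its Claim 6.1, pins down the structure of $W_{H^*}$ before switching --- either $W_{H^*}$ is a single vertex of degree $3$, or all of its vertices have degree $2$ and, by Lemma~\ref{le2.2}(iii), a \emph{common} neighbourhood $\{u_1,u_2\}$ --- and then relocates \emph{all} of $W_{H^*}$ from $u_1$ to $u^*$ simultaneously. It is this simultaneous relocation of the entire attachment set that keeps the new graph hexagon-free, and it is what your one-vertex-at-a-time switch misses. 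To rescue your argument you would need an analogous preliminary classification of how $W$ attaches to the triangle (after which your $\gamma$-bookkeeping, which is correct, goes through); as written, the proof has a genuine gap at its central step.
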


\begin{proof}
By (\ref{eq6.6}), we have $e(W)\leq1$,
and if $e(W)=1$, then $c=0$ and $N_0(u^*)=\emptyset$. Suppose that $e(W)=1$.
Then, each component $H$ of $G^*[N(u^*)]$ is isomorphic to $K_{1,r}+e$ for some $r\geq2$.
Let $w_1w_2$ be the unique edge in $G^*[W]$.
For avoiding hexagons, $w_1,w_2$ belong to the same $W_H$ and
they must have a unique and common neighbor $u\in V(H)$.
This leads to a pendant triangle and a cut vertex $u$,
which contradicts Lemma \ref{le2.2}.
Hence, $e(W)=0$.

Suppose to contrary that $G^*[N(u^*)]$ contains triangles, that is,
$G^*[N(u^*)]$ contains a component $H^*\cong K_{1,r}+e$.
We have the following claims.

\begin{cla} \label{cl6.1}
$H^*\ncong C_3$.
\end{cla}

\begin{proof} Assume that $H^*\cong C_3$ and $V(H^*)=\{u_1,u_2,u_3\}$.
If $W_{H^*}=\emptyset$, then $x_{u_1}=x_{u_2}=x_{u_3}$ and $\rho^* x_{u_1}=x_{u^*}+2x_{u_1}.$
Thus, $x_{u_1}=\frac {x_{u^*}}{\rho^*-2}<\frac13 x_{u^*},$ since $\rho^*>5$.
Therefore, $$\gamma(H^*)=\sum_{1\leq i\leq3}(d_{H^*}(u_i)-1)x_{u_i}=3x_{u_1}<(e(H^*)-2)x_{u^*}.$$
Note that $e(W)=0$, and by (\ref{eq6.2}),
$\gamma(H)\leq e(H)x_{u^*}$ for any other component $H$.
Thus, (\ref{eq6.1}) becomes an impossible inequality:
$$\left(e(N(u^*))+\sum_{u\in N_0(u^*)}\frac{x_u}{x_{u^*}}-1\right)x_{u^*}\leq \sum_H\gamma(H)<(e(N(u^*))-2)x_{u^*}.$$
Hence, $W_{H^*}\neq\emptyset.$
Clearly, $2\leq d(w)\leq |H^*|=3$ for any $w\in W_{H^*}$.
If there exists $w\in W_{H^*}$ with $d(w)=3$,
then $W_{H^*}=\{w\}$ (otherwise we will get a hexagon).
If $d(w)=2$ for any $w\in W_{H^*}$, then by Lemma \ref{le2.2},
all the vertices in $W_{H^*}$ have common neighborhood, say $\{u_1,u_2\}$.
In both cases, we define $G=G^*+\{wu^*~|~w\in W_{H^*}\}-\{wu_1~|~w\in W_{H^*}\}$.
Clearly, $G$ is $C_6$-free.
Moreover, since $x_{u^*}\geq x_{u_1}$,
by Lemma \ref{le2.1}, $\rho(G)>\rho^*$, a contradiction.
Thus, the claim holds.
\end{proof}

\begin{cla} \label{cl6.2}
$W_{H^*}=\emptyset$ and $H^*$ is the unique non-trivial component of $G^*[N(u^*)]$.
\end{cla}

\begin{proof}
Claim \ref{cl6.1} implies that $H^*\cong K_{1,r}+e$ for some $r\geq3$.
Since $G^*$ is $C_6$-free, $d_{N(u^*)}(w)=1$ for any $w\in W_{H^*}$.
Note that $e(W)=0$. Thus, $d(w)=1$ for any $w\in W_{H^*}$,
which contradicts Lemma \ref{le2.2}.
Hence, $W_{H^*}=\emptyset$.

Now $V(H^*)$ contains $r-2$ vertices of degree two.
By Lemma \ref{le2.2},
all the non-adjacent vertices of degree two have common neighborhood.
Thus, there exists no vertices of degree two out of $H^*$.
Hence, $H^*$ is the unique component which contains triangles.

Suppose that $G^*[N(u^*)]$ contains a non-trivial tree-component $H$,
i.e., $c\geq1$.
By (\ref{eq6.6}), $N_0(u^*)=\emptyset$ and $c=1$.
Since no vertices of degree two out of  $H^*$,
we have $W_{H}\neq\emptyset$ (otherwise, each leaf of $H$ is of degree two in $G^*$),
and $d(w)\geq3$ for any $w\in W_{H}$.
Note that $e(W)=0$ and $G^*$ is $C_6$-free.
We can see that $H\cong K_{1,2}$ and $|W_H|=1$.
Let $W_{H}=\{w\}$ and $u$ be the central vertex of $H$.
Clearly, $G^*+wu^*-wu$ is $C_6$-free.
And by Lemma \ref{le2.1},
$\rho(G^*+uu^*-uv)>\rho^*$, a contradiction.
The claim holds.
\end{proof}

\begin{cla} \label{cl6.3}
Let $W_0=N_W(N_0(u^*)).$ Then $W_0=\emptyset$.
\end{cla}

\begin{proof}
Suppose that $W_0\neq\emptyset$. Then $N_0(u^*)\neq\emptyset$.
By Lemma \ref{le2.2}, $d(w)\neq1$ for any $w\in W_0$.
And since no vertices of degree two out of $H^*$, $d(w)\neq2$ for any $w\in N_0(u^*)\cup W_0$.

If $d(u)\geq3$ for some $u\in N_0(u^*)$,
then $u$ has at least two neighbors $w_1,w_2\in W_0$ with $\min \{d(w_1), d(w_2)\}\geq3$.
Note that $e(W)=0$ and $e(W_0, V(H^*))=0$ for forbidding hexagons.
Thus, there exist two vertices $u_1,u_2\in N_0(u^*)\setminus\{u\}$ with $u_i\in N(w_i)$.
Then $u^*u_1w_1uw_2u_2u^*$ is a $C_6$ (see Fig.\ref{fig6.1}),
a contradiction. Thus, $d(u)=1$ for any $u\in N_0(u^*)$,
that is, $W_0=\emptyset.$
\end{proof}

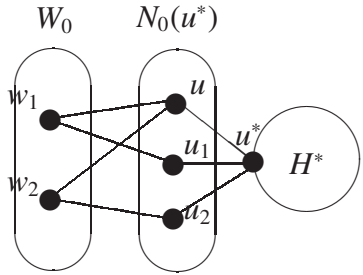
\begin{figure}[h]
\centering \setlength{\unitlength}{3.0pt}
\begin{center}
\unitlength 1.85mm 
\ifx\plotpoint\undefined\newsavebox{\plotpoint}\fi 
\begin{picture}(26.652,18.184)(0,0)
\put(2.733,8.093){\oval(5.466,15.977)[]}
\put(11.668,7.989){\oval(5.466,15.977)[]}
\put(2.523,10.931){\circle*{1.5}}
\put(2.523,5.15){\circle*{1.5}}
\put(11.562,12.087){\circle*{1.5}}
\put(11.352,7.673){\circle*{1.5}}
\put(11.352,3.784){\circle*{1.5}}
\multiput(2.418,11.036)(.22817006,.03332821){41}{\line(1,0){.22817006}}
\multiput(11.773,12.403)(-.043676697,-.03359746){219}{\line(-1,0){.043676697}}
\multiput(11.562,7.778)(-.096443017,.033592511){97}{\line(-1,0){.096443017}}
\multiput(11.457,3.784)(-.20544628,.03344474){44}{\line(-1,0){.20544628}}
\put(20.968,8.093){\circle{9.366}}
\put(17.133,7.883){\circle*{1.5}}
\put(11.562,12.403){\line(5,-4){5.781}}
\put(17.343,7.778){\line(-1,0){6.0965}}
\multiput(11.247,3.889)(.053462164,.03352712){116}{\line(1,0){.053462164}}
\put(.631,12.403){\makebox(0,0)[cc]{$w_1$}}
\put(.631,6.096){\makebox(0,0)[cc]{$w_2$}}
\put(13.139,8.829){\makebox(0,0)[cc]{$u_1$}}
\put(13.139,4.099){\makebox(0,0)[cc]{$u_2$}}
\put(13.139,13.254){\makebox(0,0)[cc]{$u$}}
\put(16.8,9.88){\makebox(0,0)[cc]{$u^*$}}
\put(21,7.778){\makebox(0,0)[cc]{$H^*$}}
\put(2.838,18.079){\makebox(0,0)[cc]{$W_0$}}
\put(11.667,18.184){\makebox(0,0)[cc]{$N_0(u^*)$}}
\end{picture}
\end{center}
\caption{The extremal graph $G^*$ in Claim $3$.}\label{fig6.1}
\end{figure}

According to Claims \ref{cl6.1}-\ref{cl6.3}, $W=\emptyset$,
and $G^*$ is obtained by joining $u^*$ with $H^*\cong K_{1,r}+e$ and $|N_0(u^*)|$ isolated vertices.
Let $u_1,u_2,u_3\in V(H^*)$ with $d_{H^*}(u_1)=d_{H^*}(u_2)=2$ and $d_{H^*}(u_3)=r$.
Then $x_{u_1}=x_{u_2}$ and $\rho^*x_{u_1}=x_{u_2}+x_{u_3}+x_{u^*}\leq x_{u_1}+2x_{u^*}$.
Hence, $x_{u_1}\leq\frac{2}{\rho^*-1}x_{u^*}<\frac{1}2 x_{u^*},$ since $\rho^*>5.$
Thus, $$\gamma(H^*)=(r-1)x_{u_3}+x_{u_1}+x_{u_2}\leq(r-1)x_{u^*}+2x_{u_1}<rx_{u^*}=(e(H^*)-1)x_{u^*}.$$
Then, (\ref{eq6.1}) becomes an impossible inequality:
$$\left(e(H^*)+\sum_{u\in N_0(u^*)}\frac{x_u}{x_{u^*}}-1\right)x_{u^*}<(e(H^*)-1)x_{u^*}.$$
This completes the proof.
\end{proof}

In the following, we give the proof of Theorem \ref{th1.4}~(ii).

\begin{proof}
By Lemma \ref{le6.1} and
Lemma \ref{le6.2},
$G^*[N(u^*)]$ consists of tree-components.
(\ref{eq6.6}) implies that $c\leq1$,
and if $c=1$ then $N_0(u^*)=\emptyset$.
Assume that $c=0$. Then $G^*$ is bipartite, since $e(W)=0$.
By Theorem \ref{th1.1},
$\rho^*\leq\sqrt{m}<\frac{1+\sqrt{4m-3}}2$, a contradiction.
Hence, $c=1$ and $N_0(u^*)=\emptyset$. Let $H$ be the unique component of $G^*[N(u^*)]$.

Since $G^*$ is $C_6$-free, $diam(H)\leq3$.
If $diam(H)=3$, then $H$ is a double star.
For forbidding $C_6$, $d_{N(u^*)}(w)=1$ for any $w\in W_H$.
This implies $W_H=\emptyset$ (otherwise, $d(w)=1$ since $e(W)=0$).
However, $H$ contains two non-adjacent vertices $u_1,u_2$ with
$d_{G^*}(u_1)=d_{G^*}(u_2)=2$ and $N(u_1)\neq N(u_2)$.
It contradicts Lemma \ref{le2.2}.
Thus, $diam(H)\leq2$, i.e., $H$ is a star $K_{1,r}$.
Let $V(H)=\{u_0,u_1,\ldots,u_r\}$, where $u_0$ is the center vertex of $H$.
Note that $\rho^*>5$ and $\rho^* x_{u^*}=\sum_{0\leq i\leq r}x_{u_i}\leq (r+1)x_{u^*}$.
Hence, $r>4$.

If there exists some $w\in W_H$ with at least two neighbors,
say $u_1,u_2$, in $V(H)\setminus \{u_0\}$,
then $u^*u_1wu_2u_0u_3u^*$ is a hexagon, a contradiction.
It follows that $d(w)=2$ and $u_0\in N(w)$ for any $w\in W_H$.
By Lemma \ref{le2.2},
all the vertices of $W_H$ have common neighbors, say $u_0,u_1$.
However, $N(u_2)=\{u^*,u_0\}\neq N(w)$ for $w\in W_H$.
This contradicts Lemma \ref{le2.2}.
Hence, $W_H=\emptyset$ and $G^*$ is the join of $u^*$ with a star $H$.
Clearly, $G^*\cong S_{\frac{m+3}2,2}.$ This completes the proof.
\end{proof}

\section{Concluding remarks}\label{se5}

In this section, we first consider the existence of cycles with consecutive lengths.
We need introduce a well-known result due to Erd\H{o}s and Gallai.

\begin{lem}{(\cite{EG})}\label{7.1}
For every $k\geq0$, $ex(n, P_{k+1})\leq \frac12(k-1)n$, with equality if and only if $n=kt$,
the extremal graph is the disjoint union of $K_k$'s.
\end{lem}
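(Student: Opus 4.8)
The plan is to prove the upper bound $ex(n,P_{k+1})\le \frac12(k-1)n$ by induction on $n$, and then to sharpen the same induction to pin down the extremal graphs. Throughout, $P_{k+1}$ is the path on $k+1$ vertices, so a $P_{k+1}$-free graph is one whose longest path has at most $k$ vertices.

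First I would settle the base case and the reduction to connected graphs. If $n\le k$ then $e(G)\le\binom n2=\frac12 n(n-1)\le\frac12 n(k-1)$, with equality precisely when $n=k$ and $G\cong K_k$. If $G$ is disconnected with components $G_1,\dots,G_c$ on $n_1,\dots,n_c$ vertices, each component is $P_{k+1}$-free, so by induction $e(G)=\sum_i e(G_i)\le \sum_i \frac12(k-1)n_i=\frac12(k-1)n$, and equality forces equality in every component. Hence it suffices to treat connected $G$ with $n>k$.

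Next comes the degree dichotomy, the engine of the induction. If $G$ has a vertex $v$ with $d(v)\le \frac{k-1}2$, then $G-v$ is $P_{k+1}$-free on $n-1$ vertices, so by induction $e(G)\le e(G-v)+\frac{k-1}2\le \frac12(k-1)(n-1)+\frac{k-1}2=\frac12(k-1)n$. The remaining, and crucial, case is $\delta(G)>\frac{k-1}2$; here I would show that no connected $P_{k+1}$-free graph with $n>k$ can occur, via a rotation-extension argument. Take a longest path $P=v_1v_2\cdots v_p$; maximality forces $N(v_1),N(v_p)\subseteq V(P)$, so $d(v_1),d(v_p)\le p-1$, while $P_{k+1}$-freeness gives $p\le k$. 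Since both endpoints have degree $>\frac{k-1}2$, we get $d(v_1)+d(v_p)>k-1\ge p-1$, hence $d(v_1)+d(v_p)\ge p$. Comparing the index sets $\{i: v_1v_{i+1}\in E\}$ and $\{i:v_pv_i\in E\}$ inside $\{1,\dots,p-1\}$, their sizes sum to at least $p>p-1$, so they meet at some $i$; then $v_1v_{i+1}\cdots v_pv_i\cdots v_1$ is a cycle on $V(P)$. As $G$ is connected and $p\le k<n$, some vertex outside $V(P)$ attaches to this cycle, yielding a path on $p+1$ vertices and contradicting the maximality of $P$. Thus this case is vacuous, and the bound follows.

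Finally, I would run the equality analysis back through the same case split. Equality together with connectedness forces, by the first paragraph, either $n=k$ (giving the single extremal component $K_k$) or $n>k$. In the latter case the high-degree branch is impossible, so $\delta(G)\le\frac{k-1}2$; equality throughout the deletion step then requires $d(v)=\frac{k-1}2$ and, by induction, $G-v\cong tK_k$. But $v$ has at least one neighbor $a$ lying in some copy $Q\cong K_k$, and following $v,a$ by a Hamiltonian path of $Q$ starting at $a$ produces a $P_{k+1}$, a contradiction. Hence no connected extremal graph has $n>k$, so every component of an extremal graph is a $K_k$; a component on fewer than $k$ vertices is excluded since $\binom n2<\frac12(k-1)n$ for $n<k$. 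Therefore $n=kt$ and $G\cong tK_k$. I expect the rotation-extension step (the high-minimum-degree case) to be the main obstacle, since it is where the global path structure, rather than a local degree count, does the work; the equality characterization is then routine bookkeeping along the induction, apart from the small-$k$ boundary cases ($k\le 2$), which I would verify directly.
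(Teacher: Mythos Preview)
Your proof is correct and is essentially the standard inductive proof of the Erd\H{o}s--Gallai theorem: delete a low-degree vertex when one exists, and otherwise use the pigeonhole/rotation argument on a longest path to close it into a cycle and extend. The equality analysis you sketch is also sound (and you are right to flag the parity issue implicitly: when $k$ is even, $d(v)=(k-1)/2$ is impossible, so the deletion step already gives strict inequality for connected graphs with $n>k$).

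As for the comparison you asked about: the paper does not supply its own proof of this lemma at all. It is quoted as a classical result from Erd\H{o}s and Gallai and used as a black box in the proof of Theorem~\ref{th1.5}, where one only needs the implication ``$e(N(j^*))>\tfrac12(2k-1)d_G(j^*)$ forces a $P_{2k+1}$ in $G[N(j^*)]$''. So there is no approach in the paper to compare yours against; your argument simply fills in a cited fact.
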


In the following, we give the proof of Theorem \ref{th1.5}.

\begin{proof}
Let $G$ be a graph of size $m$ and $Y$ be an eigenvector of $A(G)$ corresponding to $\rho=\rho(G)$ with
$\sum_{i=1}^{|G|}y_i=1$. Define $f(A(G))=A^2(G)-\frac12(2k-1)A(G)$, where $k$ is a positive integer.
Then
$f(A(G))Y=f(\rho)Y$. Thus,
\begin{eqnarray}\label{eq7.1}
f(\rho)=\sum\limits_{i=1}^{|G|}f(\rho)y_i=\sum\limits_{i=1}^{|G|}\left(f(A(G))Y\right)_i
=\sum\limits_{i=1}^{|G|}\left(\sum\limits_{j=1}^{|G|}f_{ij}y_j\right)=
\sum\limits_{j=1}^{|G|}\left(\sum\limits_{i=1}^{|G|}f_{ij}\right)y_j=\sum\limits_{j=1}^{|G|}f_j(G)y_j, \end{eqnarray}
where $f_{ij}$ is the $(i,j)$-element of $f(A(G))$ and $f_j(G)$ is the sum of the $j$-th column of $f(A(G))$.
For any $j\in V(G)$, let $U_j=V(G)\setminus N[j]$, and $g_j(G)$ be the sum of the $j$-th column of $A^2(G)$.
Clearly, $$g_j(G)=\sum_{i\in N(j)}d_G(i)=d_G(j)+2e(N(j))+e(N(j),U_j).$$
Furthermore, $$f_j(G)=d_G(j)+2e(N(j))+e(N(j),U_j)-\frac12(2k-1)d_G(j)\leq m+e(N(j))-\frac12(2k-1)d_G(j).$$
If $\rho>\frac{k-\frac12+\sqrt{4m+(k-\frac12)^2}}2$, then $f(\rho)>m$.
(\ref{eq7.1}) implies that there exists some $j^*\in V(G)$ such that $f_{j^*}(G)>m$.
Thus, $e(N(j^*))>\frac12(2k-1)d_G(j^*)$.
By Lemma \ref{7.1},
$G[N(j^*)]$ contains a copy of $P_{2k+1}$.
It follows that $G$ contains a cycle $C_t$ for every $t\leq 2k+2$.
\end{proof}

Theorems \ref{th1.1}-\ref{th1.2}
imply that if $\rho(G)\geq\sqrt{m}$, then $G$ contains $C_3$ and
$C_4$ unless $G$ is star $S_{m+1,1}$.
From Theorem \ref{th1.4}
we know that if $\rho(G)\geq\frac{1+\sqrt{4m-3}}2$,
then $G$ contains $C_t$ for every $t\leq 6$ unless $G$ is a book $S_{\frac{m+3}2, 2}$.
These results inspire us to look for a more general spectral condition for cycles with consecutive lengths,
which is stated in the following conjecture.

\begin{conj}\label{co6.1}
Let $k$ be a fixed positive integer and $G$ be a graph of sufficiently large size $m$ without isolated vertices.
If $\rho(G)\geq\frac{k-1+\sqrt{4m-k^2+1}}2$,
then $G$ contains a cycle of length $t$ for every $t\leq 2k+2$,
unless $G\cong S_{\frac mk+\frac{k+1}2, k}$.
\end{conj}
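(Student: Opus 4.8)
The plan is to run the extremal-graph method used for Theorems~\ref{th1.3} and~\ref{th1.4}, fuelled by a sharpened form of the operator identity behind Theorem~\ref{th1.5}. The case $k=2$ is already contained in Theorem~\ref{th1.4} (with Nosal's bound~\cite{Nosal} and Theorem~\ref{th1.2} covering the sub-cases in which $G$ is $C_3$- or $C_4$-free, where the threshold comfortably exceeds $\sqrt m$), so one may assume $k\ge3$. A direct computation of the Perron vector of $S_{\frac mk+\frac{k+1}2,k}$ — constant on the $k$-clique and constant on the independent part, exactly as in Lemma~\ref{le5.3} — shows that its spectral radius is the larger root of $\rho^2-(k-1)\rho-\big(m-\binom k2\big)=0$, i.e.\ precisely $\rho_0:=\frac{k-1+\sqrt{4m-k^2+1}}2$; and for $m$ large this graph has circumference $2k$, hence contains $C_t$ for all $3\le t\le 2k$ but no $C_{2k+1}$ or $C_{2k+2}$. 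So it suffices to let $G^*$ be a graph of size $m$ that is $C_t$-free for some $t\le 2k+2$ with $\rho^*:=\rho(G^*)$ maximal, to assume $\rho^*\ge\rho_0$, and to show $G^*\cong S_{\frac mk+\frac{k+1}2,k}$. Since each $C_t$ with $t\ge3$ is $2$-connected, Lemma~\ref{le2.2} applies: $G^*$ is connected, every cut vertex equals the extremal vertex $u^*$, and $d(u)\ge2$ for all $u\notin N[u^*]$.

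The second step converts $\rho^*\ge\rho_0$ into edge density around $u^*$. Replace the operator $A^2-\tfrac12(2k-1)A$ from the proof of Theorem~\ref{th1.5} by the sharper $f(A)=A^2-(k-1)A$; using $\rho_0^2-(k-1)\rho_0=m-\binom k2$, equations~(\ref{eq2.1})--(\ref{eq2.2}) give, with $W=V(G^*)\setminus N[u^*]$ and $N_+(u^*)=N(u^*)\setminus N_0(u^*)$,
\begin{equation*}
\Big(m-\tbinom k2\Big)x_{u^*}\ \le\ d(u^*)x_{u^*}-(k-1)\!\!\sum_{u\in N_0(u^*)}\!\!x_u+\!\!\sum_{u\in N_+(u^*)}\!\!\big(d_{N(u^*)}(u)-(k-1)\big)x_u+\sum_{w\in W}d_{N(u^*)}(w)x_w .
\end{equation*}
Arguing as in the proofs of Theorem~\ref{th1.3} (the split of $N(u^*)$ into $U_1,U_2',U_2''$) and of Theorem~\ref{th1.4} (the sets $N_i(u^*)$, $N_i^2(u^*)$), one shows that the vertices of $N(u^*)\cup W$ of small degree or small $d_{N(u^*)}(\cdot)$ carry Perron weight far below $x_{u^*}$; feeding this back forces $e(N(u^*))$ to be at least about $(k-1)d(u^*)-\binom k2$, together with deficiency bounds of the type $e(W)\le\binom k2-(\text{corrections})$ analogous to~(\ref{eq4.3}), (\ref{eq5.5}) and~(\ref{eq6.6}). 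A chain of edge-shifting reductions (Lemma~\ref{le2.1}), parallel to Lemmas~\ref{le4.3}, \ref{le5.6} and~\ref{le6.2}, then yields $e(W)=0$, $N_0(u^*)=\emptyset$ and finally $W=\emptyset$, so that $G^*=u^*\vee H$ is the join of $u^*$ with an edge-dense graph $H:=G^*[N(u^*)]$ on $n-1$ vertices satisfying $e(H)\ge(k-1)(n-1)-\binom k2=\binom{k-1}2+(k-1)(n-k)=e\big(S_{n-1,k-1}\big)$.

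The heart of the argument — and the step I expect to be the main obstacle — is to upgrade this to $H\cong S_{n-1,k-1}$, whence $G^*=u^*\vee S_{n-1,k-1}=S_{\frac mk+\frac{k+1}2,k}$. The natural tool is the Erd\H{o}s--Gallai bound (Lemma~\ref{7.1}): $H$ inherits $C_t$-freeness, equivalently $G^*$ has no $C_{t-1}$ through $u^*$. The difficulty is that $e(H)\gtrsim(k-1)(n-1)-\binom k2$ lies just below the threshold $\tfrac12(2k-1)(n-1)$ at which Lemma~\ref{7.1} would already force a $P_{2k+1}$ in $H$, so one cannot extract a long path in $H$ directly; instead every cycle $C_3,\dots,C_{2k+2}$ that $G^*$ must contain has to be assembled from a shorter path in $H$ together with the two edges joining its endpoints to $u^*$, uniformly in the unknown forbidden length $t$. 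In other words one faces a stability problem: on $n-1$ vertices, graphs with about $e(S_{n-1,k-1})$ edges and bounded circumference are, by Kopylov/Erd\H{o}s--Gallai-type theorems, $S_{n-1,k-1}$ or one of a short list of alternatives, and each alternative must be excluded using the spectral extremality of $G^*$ via Lemma~\ref{le2.1}. This stability step, and the accompanying equality discussion (delicate for $k\ge3$, since several graphs lie close to $\rho_0$), is where new ideas are most likely needed; the even-cycle result of Nikiforov~\cite{Niki1} suggests that the genuinely hard part is forcing the odd cycles up to $C_{2k+1}$ — precisely the cycle that $S_{\frac mk+\frac{k+1}2,k}$ avoids.
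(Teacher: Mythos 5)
The statement you are proving is not proved in the paper at all: it is posed as Conjecture~\ref{co6.1} in the concluding remarks, explicitly offered for further research. So there is no proof of record to compare against, and the relevant question is whether your proposal closes the conjecture. It does not. Your boundary computations are correct and worth keeping: the Perron vector of $S_{\frac mk+\frac{k+1}2,k}$ is constant on the clique and on the independent set, its spectral radius is the larger root of $\rho^2-(k-1)\rho-\bigl(m-\binom k2\bigr)=0$, which is exactly $\frac{k-1+\sqrt{4m-k^2+1}}2$, its circumference is $2k$, and the cases $k=1,2$ do follow from Theorems~\ref{th1.1}, \ref{th1.2} and~\ref{th1.4}. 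But for $k\ge3$ what you have written is a roadmap, not a proof, and you say so yourself.

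Two gaps are concrete. First, the density step: after replacing the operator by $A^2-(k-1)A$, the sum $\sum_{u\in N_+(u^*)}\bigl(d_{N(u^*)}(u)-(k-1)\bigr)x_u$ contains negative coefficients for every vertex $u$ with $d_{N(u^*)}(u)<k-1$, and for those terms the trivial bound $x_u\le x_{u^*}$ goes the wrong way; the paper's substitutes for this (Claims~\ref{cl3.4}--\ref{cl3.5}, Claim~\ref{cl4.1}) are bespoke two-step eigenvalue estimates that depend heavily on the specific forbidden subgraph, and you give no argument that analogues exist when the forbidden graph is an arbitrary $C_t$ with $t\le 2k+2$. The same applies to the reductions $e(W)=0$ and $W=\emptyset$: in Sections~\ref{se3} and~\ref{se4} these occupy several pages each and use $C_5$- or $C_6$-freeness in an essential structural way (e.g.\ forcing common neighborhoods), so ``parallel to Lemmas~\ref{le4.3}, \ref{le5.6} and~\ref{le6.2}'' is not a proof. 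Second, and as you acknowledge, the stability step from $e(H)\ge e(S_{n-1,k-1})$ with bounded circumference to $H\cong S_{n-1,k-1}$ is precisely the missing idea; Kopylov-type stability theorems classify such $H$ only up to a list of near-extremal configurations, and eliminating each of them by the spectral extremality of $G^*$ is the content of the conjecture, not a corollary of it. Until those two steps are supplied, the statement remains open.
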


On the other hand, we know that $\rho(G)\leq\sqrt{m}$ for several kinds of graphs, such as,
bipartite graphs, $C_3$-free graphs, $C_4$-free graphs,
$K_{2,r+1}$-free graphs, $\{C_3^+,C_4^+\}$-free graphs, and so on.
A natural question: how large can a graph family be such that $\rho\leq\sqrt{m}$?
An \emph{$r$-book}, denoted by $B_r$, is the graph obtained from $r$ triangles by sharing one edge.
At the end of this paper, we pose the following conjecture.

\begin{conj} \label{co6.2}
Let $G\in \mathbb{G}(m, B_{r+1})$ and $m\geq m_0(r)$ for large enough $m_0(r)$.
Then $\rho(G)\leq\sqrt m$, with equality if and only if $G$ is a complete bipartite graph.
\end{conj}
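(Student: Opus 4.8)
The plan is to analyse an extremal graph and reduce the problem to the triangle-free case, where Theorem~\ref{th1.1} (with $r=2$) does the job. Since $B_1=K_3$, the case $r=0$ is exactly Nosal's theorem together with its equality characterisation, so assume $r\ge1$. Let $G^*$ attain the maximum spectral radius in $\mathbb{G}(m,B_{r+1})$, let $u^*$ be an extremal vertex, and set $U=N(u^*)$ and $W=V(G^*)\setminus N[u^*]$. Because $K_{1,m}\in\mathbb{G}(m,B_{r+1})$ we have $\rho^*\ge\sqrt m$, and since $B_{r+1}$ is $2$-connected, Lemma~\ref{le2.2}~(i)--(ii) shows $G^*$ is connected with no cut vertex outside $u^*$. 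The property of $B_{r+1}$-freeness I would use throughout is that \emph{every edge lies in at most $r$ triangles}, i.e.\ $|N(x)\cap N(y)|\le r$ for each $xy\in E(G^*)$; in particular $d_U(u)\le r$ for every $u\in U$ (otherwise the edge $u^*u$ spans a $B_{r+1}$), whence $e(U)=\tfrac12\sum_{u\in U}d_U(u)\le\tfrac r2\,d(u^*)$, and also $3\,t(G^*)\le r\,m$, where $t(G^*)$ denotes the number of triangles.

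Next I would extract the reduction. Using (\ref{eq2.2}) and $\rho^*\ge\sqrt m$,
\begin{equation*}
m\,x_{u^*}\le{\rho^*}^2x_{u^*}=d(u^*)x_{u^*}+\sum_{u\in U}d_U(u)x_u+\sum_{w\in W}d_U(w)x_w\le\big(d(u^*)+2e(U)+e(U,W)\big)x_{u^*},
\end{equation*}
and since $m=d(u^*)+e(U)+e(U,W)+e(W)$ this yields the key relation $e(W)\le e(U)$. Now an edge inside $U$ is precisely a triangle through $u^*$, while every triangle avoiding $u^*$ spans an edge inside $U$ or inside $W$; hence $e(U)=e(W)=0$ forces $G^*$ to be triangle-free, and then Theorem~\ref{th1.1} gives $\rho^*\le\sqrt m$ with equality only for complete bipartite graphs, which combined with $\rho^*\ge\sqrt m$ is exactly the desired conclusion. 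So the whole task is to prove $e(U)=0$.

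To prove $e(U)=0$ I would sharpen the displayed estimate in the spirit of the proof of Theorem~\ref{th1.3}~(i): partition $U$ according to the quantities $\sum_{w\in N_W(u)}d_W(w)$ and $d_W(u)$, and argue that each $u\in U$ either carries small Perron weight $x_u$ — legitimate because $d(u)$ is then small relative to $\rho^*\approx\sqrt m$ and $m\ge m_0(r)$ is large — or its surplus contribution can be charged against $e(W)$. Summing, $e(W)\le e(U)$ would have to hold with equality at every step, which forces rigidity ($x_v=x_{u^*}$ for the relevant $v$, together with prescribed small degrees) and then exhibits a vertex whose eigenvalue equation fails, the rigidity-then-contradiction scheme employed in Sections~\ref{se3}--\ref{se4}; where a local switch is cleaner, I would delete an edge $u_1u_2\in E(U)$ and re-attach an edge at $u^*$ and apply Lemma~\ref{le2.1}, checking as in Lemma~\ref{le2.2}~(ii) (using $2$-connectedness of $B_{r+1}$) that no new $B_{r+1}$ appears.

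The main obstacle — and the reason this remains a conjecture — is that, unlike the $K_{2,r+1}$-free setting where $d_U(w)\le r$ holds for \emph{all} $w\in U\cup W$, the book condition says nothing about the non-edge $u^*w$, so a vertex $w\in W$ may have arbitrarily large $d_U(w)$ (as already happens in $K_{2,t}$). Consequently $\sum_{w\in W}d_U(w)x_w$ cannot be bounded termwise below $e(U,W)x_{u^*}$ and must be controlled globally; I expect to need an auxiliary inequality bounding $e(U)$ through $3\,t(G^*)\le r\,m$, combined with a convexity / K\H{o}v\'ari--S\'os--Tur\'an type estimate on $\sum_{w\in W}d_U(w)^2$, to show that a large $e(U)$ is incompatible with $\rho^*\ge\sqrt m$ once $m\ge m_0(r)$. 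That bookkeeping, together with ensuring that no perturbation revives a copy of $B_{r+1}$, is where the real work lies; everything past "$G^*$ is triangle-free" is immediate from Theorem~\ref{th1.1}.
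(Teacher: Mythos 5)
The statement you are proving is posed in the paper as Conjecture~\ref{co6.2}; the paper contains no proof of it, so there is nothing to compare your argument against except the machinery of Sections~\ref{se2}--\ref{se4} that you reuse. Your reduction is sound as far as it goes: $K_{1,m}$ is $B_{r+1}$-free, so $\rho^*\ge\sqrt m$, and combining this with (\ref{eq2.2}) does give $e(W)\le e(U)$; moreover $e(U)=e(W)=0$ does force $G^*$ to be triangle-free, after which Theorem~\ref{th1.1} finishes both the bound and the equality characterisation. But the entire content of the conjecture is the step $e(U)=0$, and your proposal does not prove it. You describe a strategy (partition $U$ by the quantities $\sum_{w\in N_W(u)}d_W(w)$ and $d_W(u)$, charge surpluses against $e(W)$, invoke a K\H{o}v\'ari--S\'os--Tur\'an type bound on $\sum_{w\in W}d_U(w)^2$) and then, to your credit, explicitly identify why the Section~\ref{se3} template breaks: $B_{r+1}$-freeness controls $|N(x)\cap N(y)|$ only for \emph{edges} $xy$, so $d_U(w)$ for $w\in W$ is unbounded, and the termwise estimates that drive Claims~\ref{cl3.3}--\ref{cl3.5} have no analogue. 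That is a genuine, unfilled gap, not a bookkeeping detail.

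Two further cautions on the fallback moves you mention. The local switch ``delete $u_1u_2\in E(U)$ and re-attach an edge at $u^*$'' is not obviously admissible: attaching $u^*w$ for some $w\in W$ creates $d_U(w)$ new triangles on the edge $u^*w$, and $d_U(w)$ is exactly the quantity you cannot bound, so the perturbed graph may contain $B_{r+1}$; Lemma~\ref{le2.2}~(ii) escapes this only because there the re-attached edges go into an end-block that is separated from the rest of the graph by a cut vertex. Also, ``$e(W)\le e(U)$ with equality at every step'' does not by itself yield a contradiction --- in the paper's own argument the contradiction comes from the additional structural claims that rest on $d_U(w)\le r$ for all $w\in U\cup W$, which is precisely what fails here. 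In short: the reduction to ``show $e(U)=0$'' is a reasonable plan and correctly locates the difficulty, but the conjecture remains open and your writeup does not close it.
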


%

\vspace{6bp}
\noindent{\bf Acknowledgements}
We would like to show our great gratitude to anonymous referees for their valuable suggestions which largely improve the quality of this paper.

\end{document}